\documentclass[sn-mathphys,Numbered]{sn-jnl}


\usepackage{lineno,hyperref}
\usepackage{booktabs}
\usepackage{multirow}
\usepackage{amsthm}
\usepackage{graphicx}
\usepackage{subfigure}
\usepackage{float}
\usepackage{bm}
\usepackage{lipsum}
\usepackage[marginal]{
	footmisc}
\usepackage{listings} 
\usepackage{color}
\usepackage{amsfonts,enumerate,amsmath,amssymb}
\newtheorem{thm}{Theorem}[section]

\newtheorem{prop}[thm]{Proposition}
\newtheorem{lem}[thm]{Lemma}

\newtheorem{rem}[thm]{Remark}

\newtheorem{exm}[thm]{Example}

\newcommand{\thmref}[1]{Theorem~{\rm \ref{#1}}}
\newcommand{\lemref}[1]{Lemma~{\rm \ref{#1}}}
\newcommand{\propref}[1]{Proposition~{\rm \ref{#1}}}

\def\para#1{\vskip .4\baselineskip\noindent{\bf #1}}
\bibliographystyle{elsarticle-num}
\numberwithin{equation}{section}
\usepackage{graphicx}%
\usepackage{multirow}%
\usepackage{amsmath,amssymb,amsfonts}%
\usepackage{amsthm}%
\usepackage{mathrsfs}%
\usepackage[title]{appendix}%
\usepackage{xcolor}%
\usepackage{textcomp}%
\usepackage{manyfoot}%
\usepackage{booktabs}%
\usepackage{algorithm}%
\usepackage{algorithmicx}%
\usepackage{algpseudocode}%
\usepackage{listings}%



\theoremstyle{thmstyleone}%
%

\theoremstyle{thmstyletwo}%

\theoremstyle{thmstylethree}%

\raggedbottom

\begin{document}

\title[Moderate deviations for two-time scale systems with mixed fractional Brownian motion]{Moderate deviations for two-time scale systems  with mixed fractional Brownian motion}


\author[1]{\fnm{Xiaoyu} \sur{\textsc{{Yang}}}}\email{yangxiaoyu@yahoo.com}

\author[1]{\fnm{Yuzuru} \sur{\textsc{{Inahama}}}}\email{inahama@math.kyushu-u.ac.jp}

\author*[2]{\fnm{Yong} \sur{\textsc{{Xu}}}}\email{hsux3@nwpu.edu.cn}

\affil[1]{\orgdiv{Faculty of Mathematics}, \orgname{Kyushu University}, \orgaddress{\city{Fukuoka}, \postcode{8198395}, \country{Japan}}}

\affil*[2]{\orgdiv{School of Mathematics and Statistics}, \orgname{Northwestern Polytechnical University}, \orgaddress{\city{Xi'an}, \postcode{710072}, \country{China}}}


\abstract{This work focuses on  moderate deviations for two-time scale systems with mixed fractional Brownian motion. Our proof uses the weak convergence method which is based on the  {variational representation formula for  mixed fractional Brownian motion}.  Throughout this paper,  Hurst parameter of fractional Brownian motion is larger than $1/2$ and the integral along  the fractional Brownian motion is understood as the generalized Riemann-Stieltjes integral. First, we consider  single-time scale systems with fractional Brownian motion. 	The key of our proof is showing the weak convergence of the controlled system. Next, we  extend our method to show moderate deviations for  two-time scale systems.  To this goal, we combine the Khasminskii-type averaging principle and  the weak convergence approach. 	}

\keywords{Moderate deviation, Two-time scale  system,  Fractional Brownian motion, Weak convergence method.}


\pacs[MSC Classification]{60F10, 60G15, 60H10}

\maketitle

	\section{Introduction}\label{sec-1}
In this work, we consider the following stochastic differential equation (SDE) driven by fractional Brownian motion (FBM) in $\mathbb{R}^{m}$ on the time interval $[0,T]$:
\begin{eqnarray}\label{0}
dx^{\varepsilon}_t = f(x^{\varepsilon}_t)dt + \sqrt \varepsilon  \sigma( x^{\varepsilon}_t)dB^H_{t},\quad x^{\varepsilon}_0=x_0\in \mathbb{R}^{m},
\end{eqnarray}
where $\varepsilon\in(0,1]$
is a small parameter, $B^H =(B^H_t)_{t \in [0,T]}\in \mathbb{R}^{d_1}$ is FBM with $H\in(1/2,1)$,
$f\colon \mathbb{R}^{m}\to \mathbb{R}^{m} $ and	 $\sigma\colon\mathbb{R}^{m}\to \mathbb{R}^{m\times d_1}$  are  nonlinear functions satisfying some suitable conditions which will be given in Section 3. Then there exists a unique (pathwise) solution $x^{\varepsilon}\in C^{\alpha}\left([0,T], \mathbb{R}^m\right)$ to the SDE (\ref{0})  with $\alpha\in (1-H,1/2)$ where $C^{\alpha}\left([0,T], \mathbb{R}^m\right)$ is the $\alpha$-H\"older continuous 
path space.

As $\varepsilon \to 0$, it is easy to see that  $x^\varepsilon$  {converges to the solution
	of the following deterministic ordinary differential equation (ODE)}:
\begin{eqnarray}\label{01}
dx_t = f(x_t)dt ,\quad x_0\in \mathbb{R}^{m}.
\end{eqnarray}
Then, it is important to study the fluctuation of $x^\varepsilon$  around  $x$. To do this we define the deviation component between $x^\varepsilon$ and $x$ by 
\begin{eqnarray}\label{02}
z^\varepsilon_t = \frac{x^\varepsilon_t -x_t }{\sqrt{\varepsilon}h(\varepsilon)},\quad z^\varepsilon_0=0.
\end{eqnarray}
Here,  { $h:(0,1]\to (0,\infty)$ is continuous and satisfies that $\lim_{\varepsilon \to  0} h(\varepsilon)= \infty$ and $\lim_{\varepsilon \to  0}\sqrt{\varepsilon}h(\varepsilon)=0$ for all $\varepsilon\in (0,1]$.}	(The case $h(\varepsilon)\equiv1$ corresponds to the central limit theorem (CLT), while the case $h(\varepsilon)=\frac{1}{\sqrt{\varepsilon}}$ corresponds to the  large deviation principle (LDP). Therefore,   our intermediate case $\lim_{\varepsilon \to  0}\sqrt{\varepsilon}h(\varepsilon)=0$ is called moderate deviation principle (MDP).)
 {A simple example is $h(\varepsilon)=\varepsilon^{-\frac{\theta}{2}}$ with $\theta\in(0,1)$. }
 {The MDP can be considered a bridge between the CLT and the LDP.}

The first main purpose of  this work is to prove an MDP for  $x^{\varepsilon}$ of the above system \eqref{0}.  
The family
$\{x^{\varepsilon}\}_{\varepsilon\in(0,1]}\subset C^{\alpha}\left([0,T], \mathbb{R}^m\right)$  is said to satisfy an MDP on $C^{\alpha}\left([0,T], \mathbb{R}^m\right)$ with speed $b(\varepsilon)=1/h^2(\varepsilon)$ and a
rate function $I: C^{\alpha}\left([0,T], \mathbb{R}^m\right)\rightarrow [0, \infty]$ ($1-H <\alpha <1/2$) if the following two conditions hold:
 {\begin{itemize}
	\item For each closed subset $F$ of $C^{\alpha}\left([0,T], \mathbb{R}^m\right)$,
	$$\limsup _{\varepsilon \rightarrow 0} b(\varepsilon) \log \mathbb{P}\big(z^{\varepsilon} \in F\big) \leqslant-\inf _{z \in F} I(z).$$
	\item For each open subset $G$ of $C^{\alpha}\left([0,T], \mathbb{R}^m\right)$,
	$$\liminf _{\varepsilon \rightarrow 0} b(\varepsilon)  \log \mathbb{P}\big(z^{\varepsilon} \in G\big) \geqslant-\inf _{z \in G} I(z).$$
\end{itemize}}
The above MDP will be proved in Theorem \ref{thm1}, in which $I$ is shown  to be a good rate function.

There are many references that explore the MDP. It can be traced back to  MDP for independently and identically distributed random variables, see \cite{Amosova1974, Amosova1979,Chen1991,Chen1997}. Subsequently, some results on MDP for empirical processes in finite-dimensional space have been given \cite{Arcones2003}.
For the continuous-time stochastic processes, there were also some results. Based on the variational representation for Brownian motion (BM) and Poisson random measure \cite{BP_book,1998Dupuis,2011Variational}, along the weak convergence approach, Budhiraja, Dupuis and Ganguly  established the MDP for stochastic dynamical system with jumps \cite{Budhiraja2016}. Then, Suo, Tao and Zhang focused on the  stochastic delay system with BM under polynomial growth condition \cite{Suo2018}. However, it is an open problem for non-Markovian stochastic processes such as solutions of a stochastic dynamical system driven by FBM. As FBM is neither  a Markov process nor  a martingale,     {it is used to capture long-range dependence} in real complex systems \cite{2008Biagini}. Its Hurst parameter $H$  describes the roughness of  sample paths.  {The smaller $H$ is, the rougher sample paths of FBM become}. Therefore,  we will first study   the MDP  for such a stochastic dynamical system driven by FBM.

Nowadays, the two-time scale system,  composed of two subsystems with different time scales, has been widely applied in various fields, such as financial economics \cite{2001BNS}, climate-weather (see  \cite{2000Kiefer}), biological field and so on \cite{Krupa2008Mixed}. 
So  we consider the following 
two-time scale SDEs on the time interval $[0,T]$
driven by BM and  FBM in $\mathbb{R}^{m}\times \mathbb{R}^{n}$:
\begin{eqnarray}\label{1}
\left
\{
\begin{array}{ll}
dx^{\varepsilon}_t = f_{1}(x^{\varepsilon}_t, y^{\varepsilon}_t)dt + \sqrt \varepsilon  \sigma_{1}( x^{\varepsilon}_t)dB^H_{t},\\
dy^{\varepsilon}_t = \frac{1}{\varepsilon}f_{2}( x^{\varepsilon}_t, y^{\varepsilon}_t)dt +\frac{1}{\sqrt \varepsilon} \sigma_2( x^{\varepsilon}_t, y^{\varepsilon}_t)dW_{t}
\end{array}
\right.
\end{eqnarray}
with a (non-random) initial condition 
$(x^{\varepsilon}_0, y^{\varepsilon}_0)=(x_0, y_0)\in \mathbb{R}^{m}\times \mathbb{R}^{n}$.
Here, 
$f_1\colon \mathbb{R}^{m}\times \mathbb{R}^{n} \to  \mathbb{R}^{m}$, $f_2\colon  \mathbb{R}^{m}\times \mathbb{R}^{n}\to \mathbb{R}^{n}$, 	 $\sigma_{1}\colon \mathbb{R}^{m}\to \mathbb{R}^{m\times d_1}$ and 
 {$\sigma_{2}\colon \mathbb{R}^{m}\times \mathbb{R}^{n}\to \mathbb{R}^{m\times d_2}$} are suitable nonlinear functions. 
Precise conditions on these coefficient functions will be specified later. Let
$W =(W_t)_{t \in [0,T]}$ be  $d_2$-dimensional standard Brownian motion (BM) independent of FBM $B^H$. Then $(B^H,W)$ is called   $d$-dimensional mixed FBM 
with Hurst parameter $H \in (1/2, 1)$.  
The small parameter $0< \varepsilon \ll 1$ are used to describe the time scale separation between the slow component $x^{\varepsilon}$ and the fast component $y^{\varepsilon}$.	Section 4 will present  appropriate conditions under which the two-time scale system (\ref{1}) possesses a unique (pathwise) solution $(x^\varepsilon, y^\varepsilon) \in C^\alpha([0,T], \mathbb{R}^m) \times C([0,T], \mathbb{R}^n)$ with $\alpha \in (1-H, 1/2)$. Here, 
$C\left([0,T], \mathbb{R}^n\right)$ is the  continuous path space with the uniform topology. The subsequent section will provide a comprehensive explanation of this solution.


According to the averaging principle \cite{2021pei,2023Pei,2015Xu}, we can see that under appropriate conditions, the slow component in \eqref{1} will strongly converge to the solution of the following ODE in $C^\alpha([0,T], \mathbb{R}^m)$ as $\varepsilon \to 0$, which is derived from \cite{2023Pei},  
\begin{equation}\label{4}
\left\{\begin{array}{l}
d\bar{x}_t=\bar{f}_1\left(\bar{x}_t\right) d t \\
\bar{x}_0=x_0 \in \mathbb{R}^m,
\end{array}\right.
\end{equation}
where $\bar{f}_1(x)=\int_{\mathbb{R}^{n}} f_1(x, y) d \mu_x(y)$ and $\mu_x$ is a unique invariant probability measure of the fast component with the ``frozen" slow variable  $x \in \mathbb{R}^m$.

For  moderate deviations in two-time scale system case,  Guillin and Liptser gave the MDP for two-time scale diffusions in finite dimensional space \cite{Guillin2005}. Then, Chigansky and Liptser constructed the MDP for diffusion process in a random environment \cite{Chigansky2010}. It is worth mentioning that Gasteratos considered the two-time scale reaction-diffusion system under FBM using weak convergence and viable pair method \cite{Gasteratos2023}, which is different from our work. Based on the weak convergence and averaging principle,  de Oliveira Gomes and Catuogno  gave the MDP for two-time scale system with  {Poisson} jumps \cite{Gomes2023}.  {
	Different from the above works, where the topology is the uniform one on the path space, 
in our main results, however,
	the topology is the $\alpha$-H\"older topology ($1-H < \alpha <1/2$).
Besides, we need to state the differences between \cite{Bourguin} and this work except the topology issue. Firstly, for the two-time scale system in \cite{Bourguin}, the    fast system is independent of slow component, but in our system \eqref{1}, the fast component fully depends on the slow component.  Secondly, in \cite{Bourguin}, they applied   viable pair method, but in our work, we use weak convergence method and averaging principle. }

To sum up, we first target the MDP for a single time scale system under FBM. By definition,   the MDP for $x^\varepsilon$ is equivalent to  the LDP for $z^\varepsilon$. Next, according to the equivalence between the Laplace principle and the LDP, our problem is transformed into showing the Laplace upper and lower bounds  for $z^\varepsilon$. Based on the variational representation formula for a BM \cite{BP_book}, we then focus on the weak convergence of the controlled system. For this purpose, the Taylor expansion and some boundness results for the deviation component are used. We then extend the MDP result to the two-time scale system. Different from Gasteratos, Salins, Spiliopoulos' result \cite{Gasteratos2023}, we consider the two-time scale system where the fast component is fully dependent on the slow component and establish the result in the $\alpha$-H\"older topology space. Along with the weak convergence approach, we will apply the Khasminskii-type averaging principle, the above MDP  for the single-time scale system, and the time discretization technique to solve this problem.

The structure of this paper is as follows. In Section 2, we introduce the notations and  preliminaries. In Section 3, we introduce the single-time scale system with FBM, then give needed assumptions and a precise  statement of our MDP result (Theorem \ref{thm1}). 
In Section 4, we extend to the two-time scale system with mixed FBM. 

Throughout this paper, we set 
{$\mathbb{N}=\{1,2, \ldots\}$}
and denote by
$c$, $C$, $c_1$, $C_1$, etc. certain positive constants that may change from line to line. {The time horizon $T >0$ and the non-random
	initial value 
	$x_0, (x_0,y_0)$ are arbitrary but fixed throughout this paper. We do not keep track of dependence on 
	$H \in (1/2, 1)$, $\alpha \in (1-H, 1/2)$,
	$T$, $x_0, (x_0,y_0)$.} The norm of matrices is the Hilbert-Schmidt norm.

	\section{Preliminaries}\label{sec-2}
\subsection{Fractional integrals and Spaces}\label{sec-2-1}
Now we introduce fractional integrals, some  spaces and norms that will be used in this work. 	
In this subsection $k, l \in \mathbb{N}$ and $0<\alpha <1/2$.

For  measurable functions $g:[0,T]\to \mathbb{R}^{k}$, denote that $W_0^{\alpha, 1} = W_0^{\alpha, 1} ([0,T], \mathbb{R}^{k})$ with norm
$$
\|g\|_{\alpha, 1}:=\int_0^T \frac{|g(s)|}{s^\alpha} d s+\int_0^T \int_0^s \frac{|g(s)-g(y)|}{(s-y)^{\alpha+1}} d y d s<\infty.
$$	
Denote by $W_0^{\alpha,\infty} = W_0^{\alpha,\infty} ([0,T], \mathbb{R}^{k})$ 
the space of measurable functions $g:[0,T]\to \mathbb{R}^{k}$ such that
$$\|g\|_{\alpha, \infty}:=\sup _{t \in[0, T]}\|g\|_{\alpha,[0,t]}<\infty,	$$
where 
$$\|g\|_{\alpha,[0,t]}=|g(t)|+\int_0^t \frac{|g(t)-g(s)|}{(t-s)^{\alpha+1}} d s.$$
Next, we introduce the  {the space	of H\"older continuous paths}. For $\eta \in(0,1]$,  let $C^\eta = C^\eta  ([0,T], \mathbb{R}^{k})$ be the space of $\eta$-H\"older continuous functions $g:[0,T]\to \mathbb{R}^{k}$,  with the norm
 {$$
\|g\|_{\eta\textrm{-hld}}:=\|g\|_{\infty}+\sup _{0 \leq s<t \le T} \frac{|g(t)-g(s)|}{(t-s)^\eta}<\infty,
$$}
where $\|g\|_{\infty}=\sup _{t \in[0, T]}|g(t)|$.
For any $\kappa \in (0,\alpha)$, we have the following  continuous inclusion $C^{\alpha+\kappa} \subset W_0^{\alpha, \infty} \subset C^{\alpha-\kappa}$.

Denote that  $W_T^{1-\alpha, \infty} =W_T^{1-\alpha, \infty} ([0,T], \mathbb{R}^{k})$ is the space of measurable functions $h:[0,T]\to \mathbb{R}^{k}$ such that
$$
\|h\|_{1-\alpha, \infty, T}:=\sup _{0\le s<t\le T}\left(\frac{|h(t)-h(s)|}{(t-s)^{1-\alpha}}+\int_s^t \frac{|h(y)-h(s)|}{(y-s)^{2-\alpha}} d y\right)<\infty.
$$
It is also easy to see that  {in fact, for any $\kappa \in (0,\alpha)$, we have $C^{1-\alpha+\kappa} \subset W_T^{1-\alpha, \infty} \subset C^{1-\alpha-\kappa}$.}	

For $h \in W_T^{1-\alpha, \infty}$, we have
$$
\begin{aligned}
\Lambda_\alpha(h) :=\frac{1}{\Gamma(1-\alpha)} \sup _{0<s<t<T}\mid\left( {D_{t-}^{1-\alpha} h_{t-}}\right)(s) \mid \leq \frac{1}{\Gamma(1-\alpha) \Gamma(\alpha)}\|h\|_{1-\alpha, \infty, T}<\infty.
\end{aligned}
$$
Moreover, if $g \in W_0^{\alpha, 1}([0,T], \mathbb{R}^{l\times k})$ 
and $h \in W_T^{1-\alpha, \infty}([0,T], \mathbb{R}^{k})$, the integral $\int_0^t g dh \in \mathbb{R}^{l}$ is well-defined in the sense that 
\begin{eqnarray*}
	\int_a^b g d h=(-1)^\alpha \int_a^b D_{a+}^\alpha g_{a+}(x) D_{b-}^{1-\alpha}h_{b-}(x) d x+g(a+)(h(b-)-h(a+)),
\end{eqnarray*}
where $g(a+):=\lim _{\varepsilon \searrow 0} g(a+\varepsilon)$,  {$h(b-):=\lim _{\varepsilon \searrow 0} h(b-\varepsilon)$}, $g_{a+}(x):=(g(x)-g(a+)) \mathbf{1}_{(a, b)}(x),
h_{b-}(x):=(h(x)-h(b-)) \mathbf{1}_{(a, b)}(x)$.
Recall that the Weyl derivatives of $g$ and $h$ are defined respectively as follows:
$$
D_{a+}^\alpha g(t):=\frac{1}{\Gamma(1-\alpha)}\left(\frac{g(t)}{(t-a)^\alpha}+\alpha \int_a^t \frac{g(t)-g(s)}{(t-s)^{\alpha+1}} d s\right) \mathbf{1}_{(a, b)}(t)
$$
and 
$$
D_{b-}^{1-\alpha} h_{b-}(t):=\frac{(-1)^{1-\alpha}}{\Gamma(\alpha)}\left(\frac{h(t)-h(b)}{(b-t)^{1-\alpha}}+({1-\alpha}) \int_t^b \frac{h(t)-h(s)}{(s-t)^{{2-\alpha}}} d s\right) \mathbf{1}_{(a, b)}(t)
$$
for almost $t\in(a,b)$ (the convergence of the integrals at the
singularity $s = t$ holds pointwise for almost all $t \in(a, b) $ if $p = 1$ and moreover in
$L^p$-sense if $1 < p < \infty$). Here, $\Gamma$ denotes the Gamma funciton.
So the following estimate 
 {\begin{eqnarray}\label{Lamada}
		\left|\int_s^t g d h\right| \leq \Lambda_\alpha(h) \int_s^t\left(\frac{|g(r)|}{(r-s)^\alpha}+ \alpha\int_s^r \frac{|g(r)-g(q)|}{(r-q)^{\alpha+1}}dq\right) d r,
\end{eqnarray}}
holds for all $t \in [0,T]$.


\subsection{Fractional Brownian motion}\label{sec-2-2}
{
	We introduce a  FBM
	with Hurst parameter $H$ and recall some basic knowledge  for later use.
	In this and next subsection, $H \in (0,1)$ unless otherwise stated. 
	In what follows, 
	$d_1, d_2 \in \mathbb{N}$ and  $d:=d_1+ d_2$.
}

Consider an $\mathbb{R}^{d_1}$-valued FBM as follows
\[
(B^H_t)_{t\in [0,T]}=(B_t^{H,1},B_t^{H,2},\cdots,B_t^{H,d_1})_{t\in [0,T]}
\]
with Hurst parameter $H\in (0,1)$,
where $(B_t^{H,i})_{t\in [0,T]}$, $1 \le i\le d_1$,
are independent one-dimensional FBMs.
It is a centered Gaussian process characterized by the following covariance formula
\[
\mathbb{E}\big[B_{t}^{H,i} B_{s}^{H,j}\big]
=\frac{\delta_{ij}}{2}\left[t^{2 H}+s^{2 H}-|t-s|^{2 H}\right], 
\qquad  s, t \in [0,T], \, 1 \le i, j\le d_1.
\]
Here, $\delta_{ij}$ stands for Kronecker's delta.
The increment satisfies that 
\[
\mathbb{E}\big[(B_{t}^{H,i}-B_{s}^{H,i})(B_{t}^{H,j}-B_{s}^{H,j})\big]=\delta_{ij}|t-s|^{2 H}, 
\qquad   s, t \in [0,T], \, 1 \le i, j\le d_1.
\]
When $H=1/2$, it is a standard $\mathbb{R}^{d_1}$-valued  BM.
According to the
Kolmogorov's continuity criterion, 
$B^H$ almost surely has $H'$-H\"older continuous trajectories for each $H'\in(0,H)$. 

 {Then according to the result in \cite[Corollary 3.1]{1999Decreusefond},}  and by using a standard $\mathbb{R}^{d_1}$-valued BM
$B=(B_t)_{t\in [0,T]}$, we can construct an  FBM $B^H=(B_t^H)_{t\in [0,T]}$ in the following way:
\begin{equation}\label{vol-rep}
B_t^H :=\int_0^T K_H(t, s) d B_s, \qquad t \in[0,T],
\end{equation}
where we set for all $0\le s \le t \le T$
$$
K_H(t, s) :=k_H(t, s) 1_{[0, t]}(s) 
$$
with
$$
k_H(t, s) :=\frac{c_H}{\Gamma\left(H+\frac{1}{2}\right)}(t-s)^{H-\frac{1}{2}} F\left(H-\frac{1}{2}, \frac{1}{2}-H, H+\frac{1}{2} ; 1-\frac{t}{s}\right),
$$
where
$c_H=\left[\frac{2 H \Gamma\left(\frac{3}{2}-H\right) \Gamma\left(H+\frac{1}{2}\right)}{\Gamma(2-2 H)}\right]^{1 / 2}$,   $F$ is the Gauss hypergeometric function. 
(For more details, see \cite{2020Budhiraja}.)
It is known that the map $B \mapsto B^H$  defined by (\ref{vol-rep}) is
a measurable isomorphism from the classical Wiener space to
the abstract Wiener space of FBM.
Hence, the two Gaussian structures will be 
identified through this map.

Moreover, when  $H >1/2$ and $1- H < \alpha < 1/2 $,   we have the following result. See \cite[Lemma 7.5]{2002Rascanu}  for a proof.
\begin{prop} \label{p1}
	Let $1/2<H<1$ and $1- H < \alpha < 1/2 $. Then,    $(B^H_t)_{t\in [0,T]}\in W_T^{1-\alpha, \infty} ([0,T], \mathbb{R}^{d_1})$ almost surely. Moreover, if a  stochastic process ${(v_t)_{t \in [0, T ]}}\in W_0^{\alpha, 1}([0,T], \mathbb{R}^{m\times d_1})$, then the generalized Riemann-Stieltjes
	integral $\int_0^t v_s d B_s^H$ exists 
	and
	$$
	\left|\int_0^t v_s d B_s^H\right| \leq \Lambda_\alpha\left(B^H\right)\|v\|_{\alpha, 1},
	\quad t\in[0,T].$$
	Here,   {$\Lambda_\alpha\left(B^H\right):=\frac{1}{\Gamma(1-\alpha)} \sup _{0<s<t<T}\left|\left(D_{t-}^{1-\alpha} B_{t-}^H\right)(s)\right| \leq \frac{1}{\Gamma(1-\alpha) \Gamma(\alpha)}\left\|B^H\right\|_{1-\alpha, \infty, T}$}, which has moments of all order.
\end{prop}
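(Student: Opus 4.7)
The plan is to verify the three claims in the proposition in turn, each reducing to a standard fact either already recorded in Subsection 2.1 or drawn from the Gaussian-process literature.

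First, for the sample-path assertion I would invoke Kolmogorov's continuity criterion, noted just above the statement, which yields that $B^H$ is almost surely $H'$-H\"older continuous for every $H' \in (0,H)$. Because the hypothesis $\alpha > 1-H$ is equivalent to $1-\alpha < H$, the interval $(1-\alpha, H)$ is non-empty; pick any $H'$ in it and set $\kappa := H' - (1-\alpha)$, which lies in $(0, H+\alpha-1) \subset (0,\alpha)$. The continuous embedding $C^{1-\alpha+\kappa} \subset W_T^{1-\alpha,\infty}$ recorded in Subsection 2.1 then immediately yields $B^H \in W_T^{1-\alpha,\infty}$ almost surely.

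Second, with $v \in W_0^{\alpha,1}$ and $B^H \in W_T^{1-\alpha,\infty}$ in hand, the generalized Riemann--Stieltjes integral $\int_0^t v_s\, dB^H_s$ is well defined pathwise through the Weyl-derivative representation of Subsection 2.1. The desired bound is then a direct specialization of the already-established inequality \eqref{Lamada} with $g = v$, $h = B^H$ and lower endpoint $0$: the right-hand side factorizes as $\Lambda_\alpha(B^H)$ times an expression dominated by $\|v\|_{\alpha,1}$ (the constant $\alpha < 1$ in front of the double integral is harmless).

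Third, the inequality $\Lambda_\alpha(B^H) \le (\Gamma(1-\alpha)\Gamma(\alpha))^{-1}\|B^H\|_{1-\alpha,\infty,T}$ stated in the proposition reduces moments of $\Lambda_\alpha(B^H)$ to moments of $\|B^H\|_{1-\alpha,\infty,T}$. Since $B^H$ is a centered Gaussian process whose paths almost surely lie in the Banach space $W_T^{1-\alpha,\infty}$ (by the first step), Fernique's theorem applied to this measurable seminorm gives a Gaussian-type tail and therefore finite moments of every order. The main obstacle is the first step, as it is the genuine place where the hypothesis $\alpha + H > 1$ enters: for $\alpha = 1-H$ or $\alpha \ge H$ there would be no slack to simultaneously enforce $1-\alpha+\kappa < H$ and $\kappa \in (0,\alpha)$, and the embedding argument would fail; this is precisely why the paper restricts to $\alpha \in (1-H, 1/2)$.
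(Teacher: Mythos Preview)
The paper does not actually prove this proposition; it simply refers the reader to \cite[Lemma~7.5]{2002Rascanu}. Your sketch is correct and is essentially the standard argument behind that lemma: H\"older regularity of $B^H$ via Kolmogorov together with the embedding $C^{1-\alpha+\kappa}\subset W_T^{1-\alpha,\infty}$ gives the first claim, the pathwise bound is a direct specialization of \eqref{Lamada} with lower endpoint $0$, and Fernique's theorem (which the paper itself invokes repeatedly, e.g.\ in the proof of \lemref{lem3-1}) handles the moments of $\|B^H\|_{1-\alpha,\infty,T}$.

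One small inaccuracy in your closing discussion: the condition ``$\alpha\ge H$'' does \emph{not} obstruct the embedding argument. Since $\kappa<H+\alpha-1$ and $H<1$ imply $\kappa<\alpha$ automatically, the only genuine constraint coming from this proposition is the lower bound $\alpha>1-H$. The upper bound $\alpha<1/2$ is imposed in the paper for other reasons (the SDE estimates in Section~3, where quantities like $(t-r)^{-2\alpha}$ must remain integrable), not because of Proposition~\ref{p1}.
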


Then we  introduce the Cameron-Martin space for   FBM with $H\in(0,1)$. For $f\in L^2([0,T],\mathbb{R}^{d_1})$, we define
\[
\mathcal{K_H}f(t)=\int_0^T K_H(t, s)f(s) d s, \qquad t \in[0,T].
\]	
The Cameron-Martin Hilbert space 
$\mathcal{H}^{H,d_1}= \mathcal{H}^{H}([0,T], \mathbb{R}^{d_1})$
for $(B^H_t)_{0\le t\le T}$ is  defined by
\[
\mathcal{H}^{H,d_1}=\big\{\mathcal{K_H} \dot{h}: \dot{h}\in L^2\left([0,T], \mathbb{R}^{d_1}\right)\big\}
\]
(Note that $\dot h$ is {\it not} the time derivative of $h$.) 
It holds that $\mathcal{H}^{H,d_1} \subset C^{H'}([0,T], \mathbb{R}^{d_1})$ for all $H' \in (0,H)$.
The  inner product on  $\mathcal{H}^{H,d_1}$ is  defined by 
\[
\langle h, g\rangle_{\mathcal{H}^{H,d_1}}=\langle \mathcal{K_H} \dot{h},\mathcal{K_H} \dot{g}\rangle_{\mathcal{H}^{H,d_1}}:=\langle\dot{h}, \dot{g}\rangle_{L^2}.
\]
Note that $\mathcal{K_H}:L^2([0,T],\mathbb{R}^{d_1}) \to {\mathcal{H}^{H,d_1}}$ is a unitary isometry, more explanation can be found in {\cite{1999Decreusefond}}.

For $N\in \mathbb{N}$, we set 
\[
S_N=\left\{\phi \in \mathcal{H}^{H,d_1}: \frac{1}{2} 
\|\phi\|_{\mathcal{H}^{H,d_1}}^{2} 
\leq N\right\}. 
\]
Under the weak topology, the ball $  S_N$ is actually  a compact Polish space. 		

Next, we introduce the probability space for FBM. Let 	$(\Omega, \mathcal{F}, \mathbb{P})$ be the $d_1$-dimensional 
classical Wiener space, where $\Omega=C_0 ([0,T], \mathbb{R}^{d_1} )$
is the space of $\mathbb{R}^{d_1}$-valued continuous paths 
starting at $0$ under the uniform topology, 
$\mathbb{P}$ is
the  Wiener measure on $\Omega$ and 
$\mathcal{F}$ is the $\mathbb{P}$-completion of
the Borel $\sigma$-field of $\Omega$. The filtration $\{\mathcal{F}_t\}_{t\in[0,T]} $ is defined by $\mathcal{F}_t :=\sigma \{B_s: 0 \leq s \leq t\} \vee \mathcal{N}$, where $\mathcal{N}$ is the set of all $\mathbb{P}$-negligible events. This filtration satisfies the usual condition. It is known that if $B^H$ is defined by  (\ref{vol-rep}), then we have  $\mathcal{F}_t=\sigma \{B_s^H: 0 \leq s \leq t\} \vee \mathcal{N}$.

We denote  $\mathcal{A}_b^N$, $N\in \mathbb{N}$,  the set of all 
$\mathbb{R}^{d_1}$-valued  processes $(\phi_t)_{t \in [0,T]}$ on 
$(\Omega, \mathcal{F}, \mathbb{P})$ of the form  	
$
\phi=\mathcal{K}_H \dot{\phi},
$ 
{$\mathbb{P}$-a.s.}
for some $\{\mathcal{F}_t\}$-progressively measurable process $\dot{\phi}={(\dot{\phi_t})}_{t\in[0,T]}$ such that $\frac12 \int_0^T |\dot{\phi}_t|^2_{\mathbb{R}^{d_1}} dt  \le N$, $\mathbb{P}$-a.s.
We set $\mathcal{A}_b =\cup_{N\in \mathbb{N}} \mathcal{A}_b^N$.
Due to that every $\phi\in  \mathcal{A}_b^N$  is an $S_N$-valued random 
variable and the ball $S_N$ is compact, the family $\{ \mathbb{P}\circ \phi^{-1} :  
\phi \in \mathcal{A}_b^N\}$ of probability measures is automatically tight.	
By Girsanov's formula,   
the law of  $B^H +\phi$ is 
mutually absolutely continuous to  that of  $B^H$ for every $\phi \in \mathcal{A}_b$.

\subsection{Mixed Fractional Brownian motion}\label{sec-2-3}
In this subsection, we introduce a mixed FBM
of Hurst parameter $H\in (0,1)$.

Firstly, consider an  $\mathbb{R}^{d_2}$-valued standard  BM $(W_t)_{t\in [0,T]}$ which is assumed to be independent of FBM $(B^H_t)_{t\in [0,T]}$.
The Cameron-Martin Hilbert space   {$\mathcal{H}^{\frac{1}{2},d_2} =\mathcal{H}^{\frac{1}{2}}([0,T], \mathbb{R}^{d_2})$} 
for  $(W_t)_{t\in  [0,T]}$  is defined by
\[
\mathcal{H}^{\frac{1}{2},d_2}:=\big\{  v _{\cdot}=\int_{0}^{\cdot}  v^{\prime} _{s} d s \in C([0,T],\mathbb{R}^{d_2}): v^{\prime} \in L^2([0,T],\mathbb{R}^{d_2})\big\}.
\]
The inner product of $\mathcal{H}^{\frac{1}{2},d_2}$ is defined by $\langle v, w \rangle_{\mathcal{H}^{\frac{1}{2},d_2}}:=
\int_{0}^{T} \langle v^{\prime} _{t}, w^{\prime} _{t}\rangle_{\mathbb{R}^{d_2}} d t$.

The $\mathbb{R}^d$-valued 
process $(B^H_t ,W_t)_{t\in [0,T]}$ is called  a mixed FBM 
of Hurst parameter $H$, where $d=d_1+d_2$.
Then $\mathcal{H}:={{\mathcal{H}^{H,d_1}}\oplus{\mathcal{H}^{\frac{1}{2},d_2}}}$ is the Cameron-Martin space for   $(B_t^H, W_t)_{0\le t\le T}$. 
For $N\in\mathbb{N}$, we define 
\[
\tilde S_N=\left\{(\phi,\psi) \in \mathcal{H}: 
\frac{1}{2}\|(\phi,\psi)\|_{\mathcal{H}}^2 := \frac{1}{2} 
(\|\phi\|_{\mathcal{H}^{H,d_1}}^{2} +\|\psi \|_{\mathcal{H}^{\frac{1}{2},d_2}}^{2})
\leq N\right\}. 
\]
Under the weak topology, the ball $ \tilde S_N$ is actually  a compact Polish space. 	

Next, we introduce the probability space for mixed FBM. Let 	$(\tilde\Omega, \tilde{\mathcal{F}}, \tilde{\mathbb{P}})$ be the $d$-dimensional 
classical Wiener space, where $\tilde\Omega=C_0 ([0,T], \mathbb{R}^{d} )$
is the space of $\mathbb{R}^{d}$-valued continuous paths 
starting at $0$ under the uniform topology, 
$\tilde{\mathbb{P}}$ is
the  Wiener measure on $\tilde\Omega$ and 
$\tilde{\mathcal{F}}$ is the $\tilde{\mathbb{P}}$-completion of
the Borel $\sigma$-field of $\tilde\Omega$. The filtration $\{\tilde{\mathcal{F}}_t\}_{t\in[0,T]} $ is defined by $\tilde{\mathcal{F}}_t :=\sigma \{(B_s, W_s): 0 \leq s \leq t\} \vee \mathcal{N}$, where $\mathcal{N}$ is the set of all $\tilde{\mathbb{P}}$-negligible events.  {This filtration satisfies the usual conditions.} It is known that if $B^H$ is defined by  (\ref{vol-rep}),  { then we have  $\tilde{\mathcal{F}_t}=\sigma \{(B_s^H, W_s): 0 \leq s \leq t\} \vee \mathcal{N}$.}

We denote  $\tilde{\mathcal{A}}_b^N$, $N\in \mathbb{N}$,  the set of all 
$\mathbb{R}^{d}$-valued  processes $(\phi_t,\psi_t)_{t \in [0,T]}$ on 
$(\tilde\Omega, \tilde{\mathcal{F}}, \tilde{\mathbb{P}})$ of the form  	
$(\phi, \psi)=\Bigl( \mathcal{K}_H \dot{\phi}, \, \int_0^{\cdot} \psi^{\prime}_s ds \Bigr)$, 
{$\tilde{\mathbb{P}}$-a.s.}
for some $\{\tilde{\mathcal{F}}_t\}$-progressively measurable process $(\dot{\phi}, \psi^{\prime})$ such that $\frac12 \int_0^T  ( |\dot{\phi}_t|^2_{\mathbb{R}^{d_1}}+ |\psi^{\prime}_t|^2_{\mathbb{R}^{d_2}} )  dt  \le N$, $\tilde{\mathbb{P}}$-a.s.
We set $\tilde{\mathcal{A}}_b =\cup_{N\in \mathbb{N}} \tilde{\mathcal{A}}_b^N$.
Due to that every $(\phi,\psi)\in  \tilde{\mathcal{A}}_b^N$  is an $\tilde{S}_N$-valued random 
variable and the ball $\tilde{S}_N$ is compact, the family $\{ \mathbb{P}\circ (\phi,\psi)^{-1} :  
(\phi,\psi) \in \tilde{\mathcal{A}}_b^N\}$ of probability measures is automatically tight.	
By Girsanov's formula,   
the law of  $(B^H +\phi,W+\psi)$ is 
mutually absolutely continuous to  that of  $(B^H,W)$ for every $(\phi,\psi) \in \tilde{\mathcal{A}}_b$.

\section{MDP for the single-time scale system}
In this section, we first focus on the single-time scale system with FBM (\ref{0}). In what follows, $1/2<H< 1$ will always be assumed.  	
We assume $\varepsilon\in(0,1]$ and  will let  $\varepsilon \to 0$.  In the following subsection, we will introduce assumptions of our main theorem and then state our first main theorem.
\subsection{Assumptions and  Statement of Main Result}


It is easy to see that as $\varepsilon \to 0$,  {the solution $x^\varepsilon$ to (\ref{0}) converges to the solution of the following deterministic ODEs}:
\begin{eqnarray}\label{0-1}
dx_t = f(x_t)dt , x_0\in \mathbb{R}^{m}.
\end{eqnarray}
Moreover, $x\in C^1([0,T],\mathbb{R}^m)$ since we will assume $f$ is continuous.
The deviation component $z^\varepsilon$ defined on (\ref{02}) satisfies the following SDE:
\begin{eqnarray}\label{3-1}
dz^{\varepsilon}_t = \frac{1}{\sqrt{\varepsilon}h(\varepsilon)}\big(f(x^{\varepsilon}_t)-f(x_t)\big)dt + \frac{1}{h(\varepsilon)}  \sigma( x^{\varepsilon}_t)dB^H_{t},\quad z^{\varepsilon}_0=0.
\end{eqnarray}

Firstly, we need to  ensure the existence and uniqueness of solution to the system (\ref{0}). To this end, we impose the following conditions:
\begin{itemize}
	\item[(\textbf{A1}).]   Assume  $\sigma\in C^1$ and there exists a  constant $L> 0$ such that for  any $ x_1 , x_2\in \mathbb{R}^{m}$, 
	$$\left|D\sigma\left( x_1\right)\right| \leq L,	\qquad
	\left|D\sigma\left( x_1\right)-D\sigma\left( x_2\right)\right| \leq L\left|x_1-x_2\right|$$
	hold. Here, $D$ is the standard gradient operator on $\mathbb{R}^{m}$.
	\item[(\textbf{A2}).] Assume that there exists a constant $L> 0$ such that for  any $ x_1$,  $ x_2\in \mathbb{R}^{m} $, 
	\begin{equation*}
	\begin{aligned}
	&\left|f\left( x_1\right)-f\left( x_2\right)\right| 
	\leq L\left|x_1-x_2\right|, 
	\end{aligned}
	\end{equation*}	
	hold.
\end{itemize}	
Clearly, (\textbf{A1}) and (\textbf{A2}) imply that $\sigma$ and $f$ are of   linear growth, that is, there exists a constant $L^{\prime}>0$ such that for any $x\in \mathbb{R}^m$, 	
\begin{eqnarray}\label{2.6}
\left|\sigma\left( x\right)\right| +\left|f\left( x\right)\right|\leq L^{\prime}\left(1+\left|x\right|\right)
\end{eqnarray}
holds.

By  \cite[Theorem 2.2]{Guerra}, under  (\textbf{A1}) and (\textbf{A2}) above,   there exists  a unique (pathwise) solution $x^{\varepsilon}$ to the system (\ref{0}). Then  $z^\varepsilon$ will be defined by $x^{\varepsilon}$. Consequently, there is a measurable map
\[
\mathcal{G}^{\varepsilon} (\bullet/ h(\varepsilon)): C_0\left([0,T], \mathbb{R}^{d_1}\right) \rightarrow C^{\alpha}\left([0,T], \mathbb{R}^m\right)
\]
{with $\alpha\in (1-H,1/2)$} such that
$z^{\varepsilon}:=\mathcal{G}^{\varepsilon}( B^H/h(\varepsilon) )$. In other words, this map is a solution map of (\ref{3-1}).

In order to study an MDP for the system (\ref{0}),
we  impose more conditions: 	
\begin{itemize}
	\item[(\textbf{A3}).]   Assume that  $f\in C^1$, and there exists a  constant $L> 0$ such that for  any $ x_1 , x_2\in \mathbb{R}^{m}$, 
	$$\left|Df\left( x_1\right)\right| \leq L,	\qquad
	\left|Df\left( x_1\right)-Df\left( x_2\right)\right| \leq L\left|x_1-x_2\right|$$
	hold. 
\end{itemize}			
Then, it is easy to see  that $x\in C^1([0,T],\mathbb{R}^m)$, where $x$ solves (\ref{0-1}).

Now, we define the skeleton equation as follows
\begin{eqnarray}\label{3'}
d\tilde{z}_t = {Df}({x}_t)\tilde{z}_tdt + \sigma( {x}_t)du_t,\quad\tilde{z}_0=0.
\end{eqnarray}	
Then (\textbf{A3}) implies that for any fixed $x\in \mathbb{R}^m$, $z \mapsto Df(x)z$ is linear from $\mathbb{R}^m$ to $\mathbb{R}^m$ (hence    Lipschitz continuous). 
For $u \in S_{N}$,  there exists a unique solution $\tilde {z} \in  W_0^{\alpha, \infty}([0,T], \mathbb{R}^{m})$  to the skeleton equation (\ref{3'}).  Moreover, the following estimate
$$\|\tilde{z}\|_{1-\alpha}\le c,$$ holds, where the constant $c=c_N$ is independent of $u\in S_N$. For a   precise proof, see  \cite[Proposition 3.6]{2020Budhiraja}. Due to the condition that  $1-\alpha>\alpha$, we have the  compact embedding $C^{1-\alpha}([0,T],\mathbb{R}^{m}) \subset C^{\alpha}([0,T],\mathbb{R}^{m})$. 
We  also define a map
\[
\mathcal{G}^{0}: \mathcal{H}^{H,d_1} \rightarrow  C^{\alpha}\left([0,T], \mathbb{R}^m\right)
\]
by  $\tilde{z}=\mathcal{G}^{0}(u)$.


Now, we provide a precise statement of our first main theorem. 	By definition,  an MDP for $\{x^\varepsilon\}_{\varepsilon\in(0,1]}$ is  equivalent to an LDP for that $\{z^\varepsilon\}_{\varepsilon\in(0,1]}$, so we will prove the latter.
\begin{thm}\label{thm1}
	Let $H\in(1/2,1)$  and $\alpha\in (1-H,1/2)$ and assume Assumptions (\textbf{A1})--(\textbf{A3}).
	Then,  as $\varepsilon \to 0$, the  solution $\{x^\varepsilon\}_{\varepsilon\in(0,1]}$ to   { the equation (\ref{0}) } satisfies an MDP  with speed $b(\varepsilon)=1/h^2(\varepsilon)$ on the  H\"older path space 
	$C^{\alpha}\left([0,T], \mathbb{R}^m\right)$ with the good rate function $I: C^{\alpha}\left([0,T], \mathbb{R}^m\right)\rightarrow [0, \infty]$   defined by
	\begin{eqnarray*}\label{rate1}
		I(\xi) &=& {
			\inf\Big\{\frac{1}{2}\|u\|^2_{\mathcal{H}^{H,d_1}}~:~{ u\in \mathcal{H}^{H,d_1} 
				\quad\text{such that} \quad\xi =\mathcal{G}^{0}(u)}\Big\} 
		}.		\end{eqnarray*}
\end{thm}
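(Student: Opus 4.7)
My plan is to apply the weak convergence method of Budhiraja-Dupuis-Ganguly, adapted to FBM through the Volterra representation (\ref{vol-rep}). Since $z^\varepsilon$ is an affine deterministic transform of $x^\varepsilon$, the desired MDP for $\{x^\varepsilon\}$ is equivalent to an LDP for $\{z^\varepsilon\}$ with the same speed $b(\varepsilon)=1/h^2(\varepsilon)$, which (once $I$ is shown to be good) is in turn equivalent to the corresponding Laplace principle. Accordingly I would (i) verify that $\phi\mapsto\mathcal{G}^0(\phi)$ is continuous from $(S_N,\text{weak})$ into $C^\alpha([0,T],\mathbb{R}^m)$, which together with the weak compactness of $S_N$ yields compactness of the sublevel sets of $I$; and (ii) establish the Laplace upper and lower bounds via weak convergence of the controlled trajectories.

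Applying the Bou\'e-Dupuis variational representation on the underlying classical Wiener space to the bounded functional $h^2(\varepsilon)G(z^\varepsilon)$ and then dividing by $h^2(\varepsilon)$, the substitution $v=h(\varepsilon)\dot{\phi}$ (so that the entropy cost becomes $\tfrac12\|\dot{\phi}\|_{L^2}^2=\tfrac12\|\phi\|_{\mathcal{H}^{H,d_1}}^2$ with $\phi=\mathcal{K}_H\dot{\phi}$) rewrites the Laplace functional as $\inf_{\phi}\mathbb{E}[\tfrac12\|\phi\|_{\mathcal{H}^{H,d_1}}^2+G(\tilde z^\varepsilon)]$. Here the controlled slow state $\tilde x^\varepsilon$ is driven by the shifted FBM $B^H+h(\varepsilon)\phi$, and a direct computation yields
\[
d\tilde z^\varepsilon_t=\frac{f(\tilde x^\varepsilon_t)-f(x_t)}{\sqrt\varepsilon\,h(\varepsilon)}\,dt+\frac{1}{h(\varepsilon)}\sigma(\tilde x^\varepsilon_t)\,dB^H_t+\sigma(\tilde x^\varepsilon_t)\,d\phi_t.
\]
By the standard Budhiraja-Dupuis criterion, (ii) reduces to showing that whenever $\phi^\varepsilon\in\mathcal{A}_b^N$ converges in distribution on $(S_N,\text{weak})$ to some $\phi$, the associated $\tilde z^\varepsilon$ converges in distribution in $C^\alpha$ to $\tilde z=\mathcal{G}^0(\phi)$. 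I would first obtain uniform-in-$\varepsilon$ moment bounds on $\|\tilde x^\varepsilon\|_{\alpha,\infty}$, on $\|\tilde x^\varepsilon-x\|_{\alpha,\infty}=O(\sqrt\varepsilon\,h(\varepsilon))$, and on $\|\tilde z^\varepsilon\|_{\alpha,\infty}$, by adapting the deterministic a~priori estimates of \cite{2002Rascanu} to the controlled equation via \propref{p1} and \eqref{Lamada}. A first-order Taylor expansion under (\textbf{A3}) gives
\[
\frac{f(\tilde x^\varepsilon_t)-f(x_t)}{\sqrt\varepsilon\,h(\varepsilon)}=Df(x_t)\tilde z^\varepsilon_t+R^\varepsilon_t,\qquad |R^\varepsilon_t|\le C\sqrt\varepsilon\,h(\varepsilon)\,\|\tilde z^\varepsilon\|_\infty^2,
\]
so the remainder vanishes, the pure FBM term is an $O(1/h(\varepsilon))$ contribution, and the control term $\sigma(\tilde x^\varepsilon)\,d\phi^\varepsilon$ is expected to converge to $\sigma(x)\,d\phi$, which recovers the skeleton equation (\ref{3'}).

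The principal obstacle is this last convergence: weak convergence of $\phi^\varepsilon$ in $\mathcal{H}^{H,d_1}$ is too coarse to pass to the limit in the generalized Riemann-Stieltjes integral $\int\sigma(\tilde x^\varepsilon_s)\,d\phi^\varepsilon_s$, since Young integration is not continuous in the integrator under the weak Hilbert topology. I would resolve this by invoking the compact embedding $\mathcal{H}^{H,d_1}\hookrightarrow C^{H'}$ for any $H'\in(1-\alpha,H)$, together with the continuous inclusion $C^{H'}\subset W_T^{1-\alpha,\infty}$ from Section~\ref{sec-2-1}, which upgrades the weak convergence of $\phi^\varepsilon$ to strong convergence in $W_T^{1-\alpha,\infty}$. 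Combined with the strong convergence $\tilde x^\varepsilon\to x$ in $W_0^{\alpha,\infty}$ (from the $O(\sqrt\varepsilon\,h(\varepsilon))$ a~priori bound) and the bilinear estimate of \propref{p1}, Skorokhod representation then identifies the limit of $\tilde z^\varepsilon$ as $\mathcal{G}^0(\phi)$. Tightness of $\tilde z^\varepsilon$ in $C^\alpha$ follows from the a~priori bound in the stronger $C^{1-\alpha}$ norm via the compact embedding $C^{1-\alpha}\hookrightarrow C^\alpha$. With these ingredients the Budhiraja-Dupuis machinery delivers the Laplace principle, and hence \thmref{thm1}.
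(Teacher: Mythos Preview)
Your overall architecture matches the paper's: reduce the MDP to an LDP for $z^\varepsilon$, apply the Bou\'e--Dupuis variational formula, and verify the two weak-convergence conditions for $\mathcal{G}^0$ and for the controlled process $\tilde z^\varepsilon$. The a~priori bounds and the Taylor expansion of the drift are exactly what the paper uses (Lemmas~\ref{lem3-1}--\ref{lem3-3} and the computation of $K_1$ in Step~2).

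The genuine difference is in how the problematic term $\int\sigma(\,\cdot\,)\,d\phi^\varepsilon$ is handled. You propose to upgrade the weak convergence of $\phi^\varepsilon$ in $\mathcal{H}^{H,d_1}$ to strong convergence in $W_T^{1-\alpha,\infty}$ via a compact embedding $\mathcal{H}^{H,d_1}\hookrightarrow C^{H'}$, and then appeal to the bilinear Young estimate. The paper instead avoids any compactness claim on the Cameron--Martin space: in Step~1 it writes the integral against $du^{(j)}$ explicitly through the kernel formula \eqref{3-29}, applies Fubini, and recognizes the result as an $L^2$ inner product of $\dot u^{(j)}-\dot u$ against a bounded function (equation~\eqref{3-35}), so weak $L^2$-convergence suffices directly. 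This explicit argument is cheaper and self-contained; your compact-embedding route is correct (since $\mathcal{H}^{H,d_1}\subset C^{H''}\hookrightarrow C^{H'}$ compactly for $1-\alpha<H'<H''<H$) and arguably more conceptual, but requires that extra fact.

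A second structural difference: rather than proving tightness of $\tilde z^\varepsilon$ and identifying the limit via Skorokhod as you outline, the paper introduces the auxiliary process $\hat z^\varepsilon:=\mathcal{G}^0(u^\varepsilon)$ (equation~\eqref{3-37}) and shows $\mathbb{E}\|\tilde z^\varepsilon-\hat z^\varepsilon\|_{\alpha,\infty}^2\to 0$ by a stopping-time localization on $\|B^H\|_{1-\alpha,\infty,T}$, after which $\hat z^\varepsilon\to\tilde z$ follows from Step~1 by the continuous mapping theorem. This decomposition cleanly separates the deterministic continuity of $\mathcal{G}^0$ from the stochastic error analysis, and sidesteps the need for a $C^{1-\alpha}$ moment bound on $\tilde z^\varepsilon$ itself (the paper only proves $W_0^{\alpha,\infty}$ bounds). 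Your direct route would work too, but you would need to actually establish the uniform $C^{1-\alpha}$ moment bound on $\tilde z^\varepsilon$ that you invoke for tightness, which is a slightly stronger estimate than the paper's Lemma~\ref{lem3-3}.
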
	
\begin{rem} {
The H\"older space is not a separable space hence not a Polish space. Therefore, we cannot use variational formula on the classical Wiener space $(\Omega, \mathcal{F}, \mathbb{P})$ with $\Omega=C^\alpha ([0,T], \mathbb{R}^{k} )$ directly. But, there is a famous  Banach subspace $H^\alpha = H^\alpha  ([0,T], \mathbb{R}^{k})$, which  is the space that for all $g\in C^\alpha([0,T], \mathbb{R}^{k})$,  with the norm
\begin{eqnarray*}\label{111}
	\lim_{\delta\to0+}\sup_{\substack{|t-s|\le \delta\\0 \leq s<t \le T}} \frac{|g(t)-g(s)|}{(t-s)^\alpha}=0. 
\end{eqnarray*}
The space $H^\alpha$ is sometimes called the  ``little H\"older space" informally. According to \cite{Ciesielski}, we obtain that the space $H^\alpha$ is a separable space and 
$$
	H^\alpha=\overline{\bigcup_{\kappa>0}C^{\alpha+\kappa}},
$$
where the closure is taken in the sense of the norm $\|\cdot\|_{\alpha\textrm{-hld}}$.}

 {For any given $\alpha$ that satisfies the assumption on the Holder exponent,
we can find slightly large $\beta$ which still satisfies the condition, for instance we take $\beta=\alpha+\varepsilon$.
Then, our process $x^\varepsilon$ takes values in $\alpha+\varepsilon$-H\"older space,
which is a subset of the little $\alpha$-H\"older space, so we could show that $x^\varepsilon\in H^\alpha$, which is a separable  Banach space.
We can apply   variational method 
on the $H^\alpha$ space,
which is continously embedded in the usual $\alpha$-H\"older space. Then, we will prove the weak convergence method with the $\alpha$-H\"older norm on the closed subset.
By the contraction principle, the same LDP holds on the usual  $\alpha$-H\"older space, too. }
\end{rem}
\subsection{Main Proof}	
We will prove the LDP for $\{z^\varepsilon\}_{\varepsilon\in(0,1]}$ with speed $b(\varepsilon)=1/h^2(\varepsilon)$ on the  H\"older path space 
$C^{\alpha}\left([0,T], \mathbb{R}^m\right)$ with the good rate function $I$.
%

Before proving \thmref{thm1}, we give some prior estimates. Firstly, for a  control $u^{\varepsilon}\in \mathcal{A}^{b}$, by making a shift $B^H \mapsto B^H+h(\varepsilon)u^\varepsilon$, we consider the following controlled system associated to (\ref{0}):
\begin{eqnarray}\label{3.2}
d\tilde {x}^{\varepsilon}_t =f(\tilde {x}^{\varepsilon}_t)dt + \sqrt \varepsilon h(\varepsilon)\sigma(\tilde {x}^{\varepsilon}_t)du^{\varepsilon}_t+ \sqrt \varepsilon  \sigma(\tilde {x}^{\varepsilon}_t)dB^H_{t},\quad \tilde x^{\varepsilon}_0=x_0\in \mathbb{R}^{m}.
\end{eqnarray}
Respectively, $\tilde z^\varepsilon$ satisfies the following controlled SDEs:
\begin{eqnarray}\label{3.3}
d\tilde {z}^{\varepsilon}_t =\frac{1}{\sqrt{\varepsilon}h(\varepsilon)}\big(f(\tilde x^{\varepsilon }_t)-f(x_t)\big)dt + \sigma(\tilde {x}^{\varepsilon}_t)du^{\varepsilon}_t+ \frac{1 }{h(\varepsilon)} \sigma(\tilde {x}^{\varepsilon}_t)dB^H_{t},\quad \tilde z^{\varepsilon}_0=0.
\end{eqnarray}
It is not hard to verify that there exists a unique solution $\tilde {z}^{\varepsilon}=\mathcal{G}^{\varepsilon} (\frac{B^H}{h(\varepsilon)}+u^{\varepsilon})$ of the system (\ref{3.3}). For the rest of the section, we will often use the estimate (\ref{Lamada}).
\begin{lem}\label{lem3-1}
	Assume (\textbf{A1})--(\textbf{A2}) and let  $p\ge1$ and  $N\in\mathbb{N}$. Then, there exists   a constant $C>0$ such that for every $u^{\varepsilon}\in \mathcal{A}_b^N$, we have
	\begin{equation*}
	\mathbb{E}\big[\|\tilde x^{\varepsilon}\|_{\alpha, \infty}^p\big] \leq C,
	\end{equation*}
	where $C>0$ depends only on $p$ and $N$.
\end{lem}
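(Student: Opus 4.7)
\textbf{Proof plan for Lemma \ref{lem3-1}.}
The plan is to use the pathwise generalized Riemann--Stieltjes integral estimate \eqref{Lamada} on each of the three drivers in the controlled equation \eqref{3.2}, to turn the result into a nonlinear integral inequality for $t \mapsto \|\tilde{x}^{\varepsilon}\|_{\alpha,[0,t]}$, and then to close the loop with a fractional Gronwall lemma of Nualart--R\u{a}\c{s}canu type. The $p$-th moment bound will emerge at the very end because the only random quantity appearing after the Gronwall step is $\Lambda_\alpha(B^H)$, which has moments of all orders by \propref{p1}.

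First, I would estimate the three pieces of $\tilde{x}^{\varepsilon}_t - x_0$ in the norm $\|\cdot\|_{\alpha,[0,t]}$. The drift $\int_0^\cdot f(\tilde x^\varepsilon_r)\,dr$ is the easiest: by \eqref{2.6} it is bounded in $\|\cdot\|_{\alpha,[0,t]}$ by $C\int_0^t(1+|\tilde x^\varepsilon_r|)\,dr$ plus a fractional-difference term that is again controlled by $\int_0^t(1+\|\tilde x^\varepsilon\|_{\alpha,[0,r]})\,dr$. For the FBM integral $\sqrt\varepsilon\int_0^\cdot \sigma(\tilde x^\varepsilon_r)\,dB^H_r$, I apply \eqref{Lamada} to obtain, with the Lipschitz property of $\sigma$ from (\textbf{A1}) and the linear growth \eqref{2.6}, a bound of the form
\begin{equation*}
\sqrt\varepsilon\,\Lambda_\alpha(B^H)\int_0^t\left[\frac{1+|\tilde x^\varepsilon_r|}{(r-s)^{\alpha}}+\alpha\int_s^r\frac{|\tilde x^\varepsilon_r-\tilde x^\varepsilon_q|}{(r-q)^{\alpha+1}}\,dq\right]dr,
\end{equation*}
which after another application of the definition of $\|\cdot\|_{\alpha,[0,r]}$ reads as a weighted integral of $1+\|\tilde x^\varepsilon\|_{\alpha,[0,r]}$ against a singular kernel.

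For the control term $\sqrt\varepsilon\, h(\varepsilon)\int_0^\cdot \sigma(\tilde x^\varepsilon_r)\,du^\varepsilon_r$ the key observation is that $u^\varepsilon\in\mathcal{A}_b^N$ implies $u^\varepsilon\in S_N$, i.e. $\|u^\varepsilon\|_{\mathcal{H}^{H,d_1}}\le\sqrt{2N}$, and since $\mathcal{H}^{H,d_1}\hookrightarrow C^{H'}$ for every $H'\in(0,H)$, one can pick $H'\in(1-\alpha,H)$ so that the embedding $C^{H'}\hookrightarrow W_T^{1-\alpha,\infty}$ from Section~\ref{sec-2-1} gives a deterministic constant $C_N$ with $\Lambda_\alpha(u^\varepsilon)\le C_N$ almost surely. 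Note also that $\sqrt\varepsilon\,h(\varepsilon)$ is bounded on $(0,1]$ by continuity and $\sqrt\varepsilon\,h(\varepsilon)\to 0$. Applying \eqref{Lamada} to this third term therefore produces a purely deterministic coefficient $C_N$ in front of an integral of exactly the same structure as the FBM one.

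Combining the three bounds yields a pathwise estimate of the form
\begin{equation*}
\|\tilde x^\varepsilon\|_{\alpha,[0,t]}\le C_1\Bigl(1+\Lambda_\alpha(B^H)\Bigr)+C_2\Bigl(1+\Lambda_\alpha(B^H)\Bigr)\int_0^t\left[(t-s)^{-\alpha}+1\right]\|\tilde x^\varepsilon\|_{\alpha,[0,s]}\,ds,
\end{equation*}
with $C_1,C_2$ depending only on $N$, the Lipschitz/growth constants and on $T$. The fractional Gronwall lemma (in the exact form used in \cite[Lemma~7.6]{2002Rascanu} or \cite[Proposition~3.6]{2020Budhiraja}) then gives
\begin{equation*}
\|\tilde x^\varepsilon\|_{\alpha,\infty}\le C_3\exp\!\Bigl(C_4\bigl(1+\Lambda_\alpha(B^H)\bigr)^{1/(1-\alpha)}\Bigr),
\end{equation*}
with $C_3,C_4$ independent of $\varepsilon$ and of $u^\varepsilon\in\mathcal{A}_b^N$. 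Taking the $p$-th moment and using that $\Lambda_\alpha(B^H)$ has Gaussian-type moments of every order (\propref{p1}) delivers the claim.

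The main obstacle I foresee is the careful bookkeeping in the self-referential step: the Lipschitz estimate for $\sigma(\tilde x^\varepsilon_r)-\sigma(\tilde x^\varepsilon_q)$ reintroduces $\|\tilde x^\varepsilon\|_{\alpha,[0,r]}$ on the right-hand side, so one must keep the constant in front of the double-integral term strictly controlled (and, at an intermediate stage, absorb the singular kernel $(t-s)^{-\alpha}$ on the left by choosing a sufficiently small subinterval, iterating on a partition of $[0,T]$ whose mesh depends only on $C_N$ and $\Lambda_\alpha(B^H)$). This is the standard Nualart--R\u{a}\c{s}canu pattern and, together with uniformity of $\Lambda_\alpha(u^\varepsilon)$ over $\mathcal{A}_b^N$, is what makes the final constant depend only on $p$ and $N$.
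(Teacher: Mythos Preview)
Your proposal is correct and follows the same Nualart--R\u{a}\c{s}canu pattern as the paper; the only cosmetic difference is that the paper unwinds the fractional Gronwall step explicitly via exponentially weighted norms $\|\cdot\|_{\lambda,t}$ and $\|\cdot\|_{1,\lambda,t}$ with a random $\lambda$ depending on $\Lambda_\alpha(B^H)$, whereas you invoke the lemma as a black box. One minor sharpening at the end: \propref{p1} alone (moments of all orders) is not quite sufficient to integrate $\exp\bigl(C\Lambda_\alpha(B^H)^{1/(1-\alpha)}\bigr)$---the paper appeals to Fernique's theorem, which succeeds precisely because $\alpha<1/2$ forces $1/(1-\alpha)<2$.
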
	
\begin{proof}
	In this proof, $C$ is a positive constant depending on only $p$ and $N$, which may change from line to line.   Firstly, denote $\Lambda:=\Lambda_\alpha\left(B^H\right) \vee 1$, and for any $\lambda\ge 1$, set 
	$$
	\|g\|_{\lambda, t}:=\sup _{0 \leq s \leq t} e^{-\lambda s}|g(s)|
	$$
	and 
	$$\|g\|_{1, \lambda, t}:=\sup _{0 \leq s \leq t} e^{-\lambda s} \int_0^s \frac{|g(s)-g(r)|}{(s-r)^{\alpha+1}} d r.$$ 
	
 {By  (\textbf{A1}), (\textbf{A2}) and \eqref{2.6}, the condition that $\sqrt{\varepsilon}h(\varepsilon)\in(0,1]$ and  \eqref{Lamada}}, we have
	\begin{equation}\label{3-2}
	\begin{aligned}
	\|\tilde x^{\varepsilon}\|_{\lambda, t} &=\sup _{0 \leq s \leq t} e^{-\lambda s}\left|x_0+\int_0^s f(\tilde x_r^{\varepsilon}) d r+\sqrt{\varepsilon}h(\varepsilon)\int_0^s \sigma( \tilde x_r^{\varepsilon}) d u_r^\varepsilon +\sqrt{\varepsilon}\int_0^s \sigma( \tilde x_r^{\varepsilon}) d B_r^H\right| \cr
	& \leq C \Lambda\big[1+\sup _{0 \leq s \leq t} \int_0^s e^{-\lambda(s-r)}\big(r^{-\alpha}\|\tilde x^{\varepsilon}\|_{\lambda, t}+\|\tilde x^{\varepsilon}\|_{1, \lambda, t}\big) d r\big] \cr
	& \quad+ C\|u^\varepsilon\|_{\mathcal{H}^{H,d_1}} \big[1+\sup _{0 \leq s \leq t} \int_0^s e^{-\lambda(s-r)}\big(r^{-\alpha}\|\tilde x^{\varepsilon}\|_{\lambda, t}+\|\tilde x^{\varepsilon}\|_{1, \lambda, t}\big) d r\big] ,
	\end{aligned}
	\end{equation}
	where the  inequality is due to the fact that  $\Lambda_\alpha(u^\varepsilon) \le  C\|u^\varepsilon\|_{H}\le C \|u^\varepsilon\|_{\mathcal{H}^{H,d_1}}$. See \cite[Lemma 4.1]{2020Budhiraja}.
	
	Since we have the following simple estimates
	\begin{eqnarray}\label{3-3}
	\int_0^t e^{-\lambda(t-r)} r^{-\alpha} d r 
	\leq \lambda^{\alpha-1} \sup _{z>0} \int_0^z e^{-y}(z-y)^{-\alpha} d y 
	\leq C \lambda^{\alpha-1},
	\end{eqnarray}
	we can easily see  that 
	\begin{eqnarray}\label{3-4}
	\|\tilde x^{\varepsilon}\|_{\lambda, t} 
	&\le&  {C(\Lambda+\|u^\varepsilon\|_{\mathcal{H}^{H,d_1}})\big(1+\lambda^{\alpha-1}\|\tilde x^{\varepsilon}\|_{\lambda, t}+\lambda^{-1}\|\tilde x^{\varepsilon}\|_{1, \lambda, t}\big)}.
	\end{eqnarray}
	
	Next, we will  compute $\|\tilde x^{\varepsilon}\|_{1, \lambda, t}$. Before doing so, we give some  prior estimates. Let
	\begin{eqnarray}\label{3-5}
	\mathcal{C}_1:=\int_0^t(t-s)^{-\alpha-1}\left|\int_s^t g(r) d B_r^H\right| d s.
	\end{eqnarray}
	By  Fubini's theorem and  straightforward computation, it holds that
	\begin{eqnarray*}\label{3-6}
		\mathcal{C}_1 &\le & \Lambda\left(\int_0^t \int_s^t(t-s)^{-\alpha-1} \frac{|g(r)|}{(r-s)^\alpha} d r d s+\int_0^t \int_s^t \int_s^r(t-s)^{-\alpha-1} \frac{|g(r)-g(q)|}{(r-q)^{1+\alpha}} d q d r d s\right) \cr
		&\le& \Lambda\left(\int_0^t \int_0^r(t-s)^{-\alpha-1}(r-s)^{-\alpha} d s|g(r)| d r+\int_0^t \int_0^r \int_0^q(t-s)^{-\alpha-1} d s \frac{|g(r)-g(q)|}{(r-q)^{1+\alpha}} d q d r\right) \cr
		&\le&\Lambda\left(c_\alpha \int_0^t(t-r)^{-2 \alpha}|g(r)| d r+\int_0^t \int_0^r(t-q)^{-\alpha} \frac{|g(r)-g(q)|}{(r-q)^{1+\alpha}} d q d r\right),
	\end{eqnarray*}
	where $c_\alpha=\int_0^{\infty}(1+q)^{-\alpha-1} q^{-\alpha} d q=B(2 \alpha, 1-\alpha)$ and $B$ is the well-known Beta function.
	
	Similarly, we  define  
	\begin{eqnarray*}\label{3-7}
		\mathcal{C}_2:=\int_0^t(t-s)^{-\alpha-1}\left|\int_s^t g(r) d u_r^\varepsilon\right| d s.
	\end{eqnarray*}
	Due to the property of the control $u^\varepsilon$ that $\Lambda_\alpha(u^\varepsilon) \le  C\|u^\varepsilon\|_{H}\le C \|u^\varepsilon\|_{\mathcal{H}^{H,d_1}}$,  
	\begin{eqnarray}\label{3-8}
	\mathcal{C}_2 
	&\le& {C}\|u^\varepsilon\|_{\mathcal{H}^{H,d_1}}\left(c_\alpha \int_0^t(t-r)^{-2 \alpha}|g(r)| d r\right.\cr
	&&\left.+\int_0^t \int_0^r(t-q)^{-\alpha} \frac{|g(r)-g(q)|}{(r-q)^{1+\alpha}} d q d r\right).
	\end{eqnarray}
	 {Then, we define $\mathcal{C}_3:=\sup _{0 \leq s \leq t} e^{-\lambda s} \int_0^s(s-r)^{-\alpha-1}\left|\int_r^s f\left(\tilde{x}_q^{\varepsilon}\right) d q\right| d r$.
		By  (\textbf{A2}), we obtain
\begin{eqnarray*}
	 \mathcal{C}_3 
	&\leq & C \sup _{0 \leq s \leq t} e^{-\lambda s} \int_0^s(s-r)^{-\alpha-1} \int_r^s\left(1+\left|\tilde{x}_q^{\varepsilon}\right|\right) d q d r \\
	&\leq & C\left(1+\sup _{0 \leq s \leq t} e^{-\lambda s} \int_0^s(s-r)^{-\alpha-1} \int_r^s\left|\tilde{x}_q^{\varepsilon}\right| d q d r\right) \\
	&= & C\left(1+\sup _{0 \leq s \leq t} e^{-\lambda s} \int_0^s(s-r)^{-\alpha-1} \int_r^s(q-r)^\alpha \frac{\left|\tilde{x}_q^{\varepsilon}\right|}{(q-r)^\alpha} d q d r\right) \\
&	\leq & C\left(1+\sup _{0 \leq s \leq t} e^{-\lambda s} \int_0^s(s-r)^{-\alpha-1} \int_r^s \frac{\left|\tilde{x}_q^{\varepsilon}\right|}{(q-r)^\alpha} d q d r\right) \\
	&= & C\left(1+\sup _{0 \leq s \leq t} e^{-\lambda s} \int_0^s \int_0^q(s-r)^{-\alpha-1}(q-r)^{-\alpha} d r\left|\tilde{x}_q^{\varepsilon}\right| d q\right) \\
	&\leq & C\left(1+ \sup _{0 \leq s \leq t} e^{-\lambda s} \int_0^s(s-q)^{-2 \alpha}\left|\tilde{x}_q^{\varepsilon}\right| d q\right) \\
	&\leq & C\left(1+ \sup _{0 \leq s \leq t} \int_0^s e^{-\lambda(s-q)}(s-q)^{-2 \alpha}\left\|\tilde{x}_q^{\varepsilon}\right\|_{\lambda, t} d q\right) .
\end{eqnarray*}}
	By some direct computations,  we have
	\begin{eqnarray}\label{3-9}
	\|\tilde x^{\varepsilon}\|_{1, \lambda, t}&=& \sup _{0 \leq s \leq t} e^{-\lambda s} \int_0^s(s-r)^{-\alpha-1}\left|\int_r^s f( \tilde x_q^{\varepsilon}) d q\right| d r \cr
	&&+\sup _{0 \leq s \leq t} e^{-\lambda s} \int_0^s(s-r)^{-\alpha-1}\left|\int_r^s \sqrt{\varepsilon}\sigma( \tilde x_q^{\varepsilon}) d B_q^H\right| d r \cr
	&&+\sup _{0 \leq s \leq t} e^{-\lambda s} \int_0^s(s-r)^{-\alpha-1}\left|\int_r^s \sqrt{\varepsilon}h(\varepsilon)\sigma( \tilde x_q^{\varepsilon}) d u_q^\varepsilon\right| d r \cr
	&\le & C (\Lambda+\|u^\varepsilon\|_{\mathcal{H}^{H,d_1}})\left(1+\sup _{0 \leq s \leq t} \int_0^s e^{-\lambda(s-r)}\times\left[(s-r)^{-2 \alpha}\|\tilde x^{\varepsilon}\|_{\lambda, t}\right.\right.\cr
	&&\left.\left.\qquad\qquad\qquad\qquad\qquad\qquad\qquad\qquad\qquad\qquad+(s-r)^{-\alpha}\|\tilde x^{\varepsilon}\|_{1, \lambda, t}\right] d r\right) \cr
	&\le & C (\Lambda+\|u^\varepsilon\|_{\mathcal{H}^{H,d_1}})\left(1+\lambda^{2 \alpha-1}\|\tilde x^{\varepsilon}\|_{\lambda, t}+\lambda^{\alpha-1}\|\tilde x^{\varepsilon}\|_{1, \lambda, t}\right),
	\end{eqnarray}
	where the first inequality is from Assumptions (\textbf{A1}) and (\textbf{A2}), and the final inequality comes from the following  estimate,
	\begin{eqnarray}\label{3-10}
	\int_0^t e^{-\lambda(t-r)}(t-r)^{-2 \alpha} d r 
	\le \lambda^{2 \alpha-1} \int_0^{\infty} e^{-q} q^{-2 \alpha} d q 
	\le C \lambda^{2 \alpha-1} .
	\end{eqnarray}
	We take $\lambda$  large enough. For instance, we set  $\lambda=\max{\{1,(4 C (\Lambda+\|u^\varepsilon\|_{\mathcal{H}^{H,d_1}}))^{\frac{1}{1-\alpha}}\}}$. From the estimate (\ref{3-4}), we have
	\begin{eqnarray}\label{3-11}
	\|\tilde x^{\varepsilon}\|_{\lambda, t} \leq \frac{4}{3} C (\Lambda+\|u^\varepsilon\|_{\mathcal{H}^{H,d_1}})(1+\lambda^{-1}\|\tilde x^{\varepsilon}\|_{1, \lambda, t}).
	\end{eqnarray}
	Substituting the above estimate into (\ref{3-9}), we can get that
	\begin{eqnarray}\label{3-12}
	\|\tilde x^{\varepsilon}\|_{1, \lambda, t} &\le& \frac{3}{2} C (\Lambda+\|u^\varepsilon\|_{\mathcal{H}^{H,d_1}})+2(C (\Lambda+\|u^\varepsilon\|_{\mathcal{H}^{H,d_1}}))^{1 /(1-\alpha)} \cr
	&\le& C (\Lambda+\|u^\varepsilon\|_{\mathcal{H}^{H,d_1}})^{1 /(1-\alpha)}\cr
	&\le& C (\Lambda+\sqrt{2N})^{1 /(1-\alpha)}.
	\end{eqnarray}
	Substituting (\ref{3-12}) into (\ref{3-11}), we can see that 
	\begin{eqnarray}\label{3-13}
	\|\tilde x^{\varepsilon}\|_{ \lambda, t} 
	&\le& C (\Lambda+\|u^\varepsilon\|_{\mathcal{H}^{H,d_1}})^{1 /(1-\alpha)}
	\le C (\Lambda+\sqrt{2N})^{1 /(1-\alpha)}.
	\end{eqnarray}
	Thus, we have 
	\begin{eqnarray*}\label{3-14}
		\|\tilde x^{\varepsilon}\|_{\alpha, \infty} & \le& e^{\lambda T}\big(\|\tilde x^{\varepsilon}\|_{\lambda, T}+\|\tilde x^{\varepsilon}\|_{1, \lambda, T}\big) \cr
		& \le& C (\Lambda+\|u^\varepsilon\|_{\mathcal{H}^{H,d_1}})^{1 /(1-\alpha)}\exp \big(C (\Lambda+\|u^\varepsilon\|_{\mathcal{H}^{H,d_1}})^{1 /(1-\alpha)}\big)  \cr
		& \le& C(\Lambda+\sqrt{2N})^{1 /(1-\alpha)} \exp \big(C (\Lambda+\sqrt{2N})^{1 /(1-\alpha)}\big) .
	\end{eqnarray*}
	Then, according to the condition that $0<\frac{1}{1-\alpha}<2$, assumptions $u^{\varepsilon}\in \mathcal{A}_b^N$ and the   {Fernique theorem \cite{Fernique}}  for FBM, the conclusion is obtained.
	This proof is completed.
\end{proof}	
\begin{lem}\label{lem3-2}
	Assume (\textbf{A1})--(\textbf{A2}) and let $N\in\mathbb{N}$ and $p\geq1$. Then, there exists a constant $C>0$ such that for every $u^{\varepsilon}\in \mathcal{A}_b^N$, we have
	\begin{equation*}
	\mathbb{E}\big[\sup_{ t\in[0,T]}|\tilde z^{\varepsilon}_t|^p\big] \leq C,
	\end{equation*}
	where $C>0$ depends only on $p$ and $N$.
\end{lem}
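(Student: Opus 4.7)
The plan is to estimate each of the three terms on the right-hand side of the controlled equation \eqref{3.3} separately and then close the bound with Gronwall's lemma. The drift term is the easiest: since $\tilde x^\varepsilon_s - x_s = \sqrt{\varepsilon}h(\varepsilon)\tilde z^\varepsilon_s$, the Lipschitz continuity of $f$ in \textbf{(A2)} yields
$$
\frac{1}{\sqrt{\varepsilon}h(\varepsilon)}\bigl|f(\tilde x^\varepsilon_s) - f(x_s)\bigr| \le L|\tilde z^\varepsilon_s|,
$$
so the apparently singular prefactor cancels and the drift integrated on $[0,t]$ contributes only $L\int_0^t|\tilde z^\varepsilon_s|\,ds$, which is exactly the shape Gronwall requires.

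For the two pathwise integrals against $u^\varepsilon$ and $B^H$, I would invoke the Young-type bound \eqref{Lamada}. The first step is to control $\|\sigma(\tilde x^\varepsilon)\|_{\alpha,1}$ by $\|\tilde x^\varepsilon\|_{\alpha,\infty}$: assumption \textbf{(A1)} together with the linear growth estimate \eqref{2.6} gives $\|\sigma(\tilde x^\varepsilon)\|_{\alpha,\infty}\le C(1+\|\tilde x^\varepsilon\|_{\alpha,\infty})$, and the elementary comparison $\|g\|_{\alpha,1}\le C_T\|g\|_{\alpha,\infty}$ combined with \lemref{lem3-1} then yields uniform $L^p$-control on $\|\sigma(\tilde x^\varepsilon)\|_{\alpha,1}$ for every $p\ge 1$. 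The second step handles the drivers: $\Lambda_\alpha(u^\varepsilon)\le C\|u^\varepsilon\|_{\mathcal{H}^{H,d_1}}\le C\sqrt{2N}$ almost surely because $u^\varepsilon\in\mathcal{A}_b^N$, and $\Lambda_\alpha(B^H)$ has moments of every order by \propref{p1} and Fernique's theorem. Since $h$ is continuous and positive on $(0,1]$ with $h(\varepsilon)\to\infty$ as $\varepsilon\to 0$, the factor $1/h(\varepsilon)$ is bounded uniformly in $\varepsilon\in(0,1]$. Applying \eqref{Lamada} then produces
$$
\sup_{t\in[0,T]}\Bigl|\int_0^t\sigma(\tilde x^\varepsilon_s)\,du^\varepsilon_s\Bigr| + \sup_{t\in[0,T]}\frac{1}{h(\varepsilon)}\Bigl|\int_0^t\sigma(\tilde x^\varepsilon_s)\,dB^H_s\Bigr| \le C\bigl(\sqrt{2N}+\Lambda_\alpha(B^H)\bigr)\|\sigma(\tilde x^\varepsilon)\|_{\alpha,1} =: R^\varepsilon,
$$
and Cauchy--Schwarz together with the previous step gives $\mathbb{E}[|R^\varepsilon|^p]\le C(p,N)$ uniformly in $\varepsilon$.

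Combining the three estimates, $|\tilde z^\varepsilon_t|\le L\int_0^t|\tilde z^\varepsilon_s|\,ds + R^\varepsilon$ for every $t\in[0,T]$, and since $R^\varepsilon$ is $t$-independent Gronwall's inequality yields $\sup_{t\in[0,T]}|\tilde z^\varepsilon_t|\le R^\varepsilon e^{LT}$. Taking $p$-th moment expectations completes the proof. The main (but routine) subtlety lies in the random/deterministic splitting used to bound $\mathbb{E}[|R^\varepsilon|^p]$ through Cauchy--Schwarz, so that the $\varepsilon$-uniform control furnished by \lemref{lem3-1} is not lost when it is paired with the Gaussian moments of $\Lambda_\alpha(B^H)$.
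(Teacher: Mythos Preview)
Your argument is correct and follows essentially the same route as the paper: both treat the drift via the Lipschitz cancellation $\frac{1}{\sqrt{\varepsilon}h(\varepsilon)}|f(\tilde x^\varepsilon_s)-f(x_s)|\le L|\tilde z^\varepsilon_s|$, bound the two pathwise integrals through \eqref{Lamada} together with $\Lambda_\alpha(u^\varepsilon)\le C\sqrt{2N}$ and the moment bound on $\Lambda_\alpha(B^H)$, and close with Gronwall and Fernique. The only cosmetic differences are that the paper works directly with $|\tilde z^\varepsilon_t|^p$ before Gronwall and reuses the explicit pathwise estimate $\|\tilde x^\varepsilon\|_{\alpha,\infty}\le C(\Lambda+\sqrt{2N})^{1/(1-\alpha)}\exp\bigl(C(\Lambda+\sqrt{2N})^{1/(1-\alpha)}\bigr)$ established inside the proof of \lemref{lem3-1}, whereas you quote the moment statement of \lemref{lem3-1} and decouple via Cauchy--Schwarz; both packagings are valid.
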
	
\begin{proof}
	In this proof, $C$ is a positive constant depending on only $p$ and $N$, which may change from line to line.
	By some straightforward computation, we have that
	\begin{eqnarray}\label{3-15}
	|\tilde z^{\varepsilon}_t|^p& \le&   C\big(\left|\int_0^t\frac{f( \tilde x_s^{\varepsilon})-f(  x_s)}{\sqrt{\varepsilon}h(\varepsilon)} d s\right|^p  +\left|\int_0^t\sigma( \tilde x_s^{\varepsilon}) du_s^\varepsilon\right|^p+\left|\int_0^t\frac{\sigma( \tilde x_s^{\varepsilon})}{h(\varepsilon)} dB_s^H\right|^p\big) \cr
	& =:& C(A_1+A_2+A_3),
	\end{eqnarray}
	for some  $C>0$.
	Then, according to Assumption (\textbf{A2}) and H\"older's inequality, we  obtain 
	\begin{eqnarray}\label{3-16}
	A_1& \le& C  \int_0^t|\tilde z^{\varepsilon}_s|^p d s,
	\end{eqnarray}
	for some  $C>0$.

	For the second term $A_2$, by the fact that  $\Lambda_\alpha(u^\varepsilon) \le  C \|u^\varepsilon\|_{\mathcal{H}^{H,d_1}}$, \eqref{3-12} and \eqref{3-13},   we have
	\begin{eqnarray}\label{3-18}
	A_2& \le&
	 {C}\|u^\varepsilon\|_{\mathcal{H}^{H,d_1}}^p\big[ \int_0^ts^{- \alpha}|\sigma(\tilde{x}_s^\varepsilon)| ds+\int_0^t \int_0^s \frac{|\sigma(\tilde{x}_s^\varepsilon)-\sigma(\tilde{x}_r^\varepsilon)|}{(s-r)^{1+\alpha}} d r d s\big]^p\cr
	& \le&  {C}\|u^\varepsilon\|_{\mathcal{H}^{H,d_1}}^p\big[ 1+\int_0^ts^{- \alpha}e^{\lambda T}\|\tilde{x}^\varepsilon\|_{\lambda,t} d s+\int_{0}^{t}e^{\lambda T}  \|\tilde{x}^\varepsilon\|_{1,\lambda,t}ds \big]^p\cr
	& \le& {C}{(2N)}^{\frac{p}{2}}\big[ 1+ (\Lambda+\sqrt{2N})^{1 /(1-\alpha)} \exp \big(C (\Lambda+\sqrt{2N})^{1 /(1-\alpha)}\big) \big]^p,
	\end{eqnarray}
	where $\lambda$ is the same  constant as in  \eqref{3-12} and \eqref{3-13}. 
	
	For the third term $A_3$, by the Assumption (\textbf{A1}), \eqref{3-12}, \eqref{3-13}, and condition that $\frac{1}{h(\varepsilon)}\in(0,1]$, we have
	\begin{eqnarray}\label{3-17}
	A_3& \le&\Lambda^p\big[ \int_0^ts^{- \alpha}|\sigma(\tilde{x}_s^\varepsilon)| d s+\int_0^t \int_0^s \frac{|\sigma(\tilde{x}_s^\varepsilon)-\sigma(\tilde{x}_r^\varepsilon)|}{(s-r)^{1+\alpha}} d r d s\big]^p\cr
	& \le&  {C}\Lambda^p\big[1+ \int_0^ts^{- \alpha}e^{\lambda T}\|\tilde{x}^\varepsilon\|_{\lambda,t} d s+e^{\lambda T}  \|\tilde{x}^\varepsilon\|_{1,\lambda,t} \big]^p\cr
	& \le& {C}\Lambda^p\big[ 1+ (\Lambda+\sqrt{2N})^{1 /(1-\alpha)} \exp \big(C (\Lambda+\sqrt{2N})^{1 /(1-\alpha)}\big) \big]^p.
	\end{eqnarray}

	 {Combine \eqref{3-15}--\eqref{3-17}, and using Gronwall's inequality, we have}
	\begin{eqnarray}\label{3-19}
	|\tilde z^{\varepsilon}_t|^p		& \le&C({(2N)}^{\frac{p}{2}}+\Lambda^p)\big[ 1+ (\Lambda+\sqrt{2N})^{1 /(1-\alpha)} \exp \big(C (\Lambda+\sqrt{2N})^{1 /(1-\alpha)}\big) \big]^p\cr
	&&\cdot e^{CT}.
	\end{eqnarray}
	Then, by taking $\sup_{ t\in[0,T]}$ and taking expectation on the both sides of \eqref{3-19}, using  {Fernique theorem \cite{Fernique}}, we obtain 
	\begin{eqnarray*}\label{3-20}
		\mathbb{E}[\sup_{ t\in[0,T]}|\tilde z^{\varepsilon}_t|^p]& \le&C.
	\end{eqnarray*}
	The proof is completed. 
\end{proof}	

\begin{lem}\label{lem3-3}
	Assume (\textbf{A1})--(\textbf{A2}) and let  $p\ge1$ and  $N\in\mathbb{N}$. Then, there exists a constant $C>0$ such that for every $u^{\varepsilon}\in \mathcal{A}_b^N$, we have
	\begin{equation*}
	\mathbb{E}\big[\|\tilde z^{\varepsilon}\|_{\alpha, \infty}^p\big] \leq C,
	\end{equation*}
	where $C>0$ depends only on $p$ and $N$.
\end{lem}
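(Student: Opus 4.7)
The plan is to adapt the $\lambda$-weighted norm technique of \lemref{lem3-1} directly to the integral form of the controlled deviation equation \eqref{3.3}. The crucial simplification compared with \lemref{lem3-1} is that the drift of $\tilde z^\varepsilon$ is \emph{linear} in $\tilde z^\varepsilon$ itself with the bounded Lipschitz constant $L$: by (\textbf{A2}) and the identity $\tilde x^\varepsilon_t-x_t=\sqrt\varepsilon h(\varepsilon)\tilde z^\varepsilon_t$, one has
$$
\left|\frac{f(\tilde x^\varepsilon_t)-f(x_t)}{\sqrt\varepsilon h(\varepsilon)}\right|\le L|\tilde z^\varepsilon_t|,
$$
with $L$ independent of $\varepsilon$, $u^\varepsilon$ and $\tilde x^\varepsilon$. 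Meanwhile, the noise coefficients $\sigma(\tilde x^\varepsilon_\cdot)$ in front of $du^\varepsilon$ and $h(\varepsilon)^{-1}dB^H$ do not involve $\tilde z^\varepsilon$ at all; by (\textbf{A1}), $|\sigma(\tilde x^\varepsilon_s)|\le L'(1+\|\tilde x^\varepsilon\|_{\alpha,\infty})$ and $|\sigma(\tilde x^\varepsilon_r)-\sigma(\tilde x^\varepsilon_q)|\le L|\tilde x^\varepsilon_r-\tilde x^\varepsilon_q|$, so the relevant $W_0^{\alpha,1}$-type norms of $\sigma(\tilde x^\varepsilon_\cdot)$ are dominated linearly by $1+\|\tilde x^\varepsilon\|_{\alpha,\infty}$, whose moments of all orders are uniformly bounded in $u^\varepsilon\in\mathcal A_b^N$ by \lemref{lem3-1}.

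Setting $\Lambda:=\Lambda_\alpha(B^H)\vee 1$ and using the same weighted seminorms $\|\cdot\|_{\lambda,t}$, $\|\cdot\|_{1,\lambda,t}$ as in the proof of \lemref{lem3-1}, I would bound the drift contribution to $\|\tilde z^\varepsilon\|_{\lambda,t}$ by $CL\lambda^{-1}\|\tilde z^\varepsilon\|_{\lambda,t}$, and its contribution to $\|\tilde z^\varepsilon\|_{1,\lambda,t}$ by $CL\lambda^{2\alpha-1}\|\tilde z^\varepsilon\|_{\lambda,t}$ (after swapping the order of integration and invoking the estimate $\int_0^t e^{-\lambda(t-q)}(t-q)^{-2\alpha}dq\le C\lambda^{2\alpha-1}$ from \eqref{3-10}). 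For the two stochastic integrals, applying \eqref{Lamada} with $\Lambda_\alpha(u^\varepsilon)\le C\|u^\varepsilon\|_{\mathcal H^{H,d_1}}\le C\sqrt{2N}$ and the $\sigma$-bounds above, via the same Fubini manipulations used for $\mathcal C_1$ and $\mathcal C_2$ in \lemref{lem3-1}, yields
\begin{align*}
\|\tilde z^\varepsilon\|_{\lambda,t}&\le CL\lambda^{-1}\|\tilde z^\varepsilon\|_{\lambda,t}+C(\Lambda+\sqrt{2N})\bigl(1+\|\tilde x^\varepsilon\|_{\alpha,\infty}\bigr),\\
\|\tilde z^\varepsilon\|_{1,\lambda,t}&\le CL\lambda^{2\alpha-1}\|\tilde z^\varepsilon\|_{\lambda,t}+C(\Lambda+\sqrt{2N})\bigl(1+\|\tilde x^\varepsilon\|_{\alpha,\infty}\bigr).
\end{align*}

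In contrast to \lemref{lem3-1}, here $\lambda$ can be chosen of \emph{fixed} order depending only on $L$, $T$, $\alpha$, independently of the random quantities $\Lambda$ and $u^\varepsilon$; this absorbs the $\|\tilde z^\varepsilon\|_{\lambda,t}$ terms on the right-hand sides, and proceeding as in \eqref{3-11}--\eqref{3-13} gives $\|\tilde z^\varepsilon\|_{\lambda,T}+\|\tilde z^\varepsilon\|_{1,\lambda,T}\le C(\Lambda+\sqrt{2N})(1+\|\tilde x^\varepsilon\|_{\alpha,\infty})$, hence the pathwise bound
$$
\|\tilde z^\varepsilon\|_{\alpha,\infty}\le e^{\lambda T}\bigl(\|\tilde z^\varepsilon\|_{\lambda,T}+\|\tilde z^\varepsilon\|_{1,\lambda,T}\bigr)\le C(\Lambda+\sqrt{2N})\bigl(1+\|\tilde x^\varepsilon\|_{\alpha,\infty}\bigr).
$$
Raising to the $p$-th power, applying Cauchy--Schwarz, and combining Fernique's theorem for $\Lambda$ with the moment bound $\mathbb E[\|\tilde x^\varepsilon\|_{\alpha,\infty}^{2p}]\le C$ from \lemref{lem3-1} then concludes the proof.

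The main point requiring care is to verify, in the stochastic-integral bounds, that the $\|\tilde x^\varepsilon\|_{\alpha,\infty}$-dependence is at most linear, so that no exponential-of-exponential appears after taking expectations. This is precisely what the bounded derivative $D\sigma$ in (\textbf{A1}) and the direct use of $\int_0^r|\tilde x^\varepsilon_r-\tilde x^\varepsilon_q|/(r-q)^{\alpha+1}dq\le\|\tilde x^\varepsilon\|_{\alpha,\infty}$ guarantee, and it is also why the absorbing threshold $\lambda$ does not need to scale with $\Lambda$ or $\|u^\varepsilon\|_{\mathcal H^{H,d_1}}$, unlike in \lemref{lem3-1}.
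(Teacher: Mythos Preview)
Your argument is correct and in fact cleaner than the paper's. Both proofs use the $\lambda$-weighted seminorms $\|\cdot\|_{\lambda,t}$, $\|\cdot\|_{1,\lambda,t}$, but they organise the estimates differently.

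The paper first establishes a separate sup-norm bound (\lemref{lem3-2}) for $\tilde z^\varepsilon$ via Gronwall, and then in the proof of \lemref{lem3-3} bounds the drift crudely by $C\sup_s|\tilde z^\varepsilon_s|$ while estimating the stochastic integrals through the \emph{random} weight $\lambda=\max\{1,(4C(\Lambda+\|u^\varepsilon\|_{\mathcal H^{H,d_1}}))^{1/(1-\alpha)}\}$ inherited from \lemref{lem3-1}; consequently the factor $e^{\lambda T}=\exp\bigl(C(\Lambda+\sqrt{2N})^{1/(1-\alpha)}\bigr)$ reappears, and the conclusion needs Fernique, H\"older, and the input \lemref{lem3-2}.

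You instead exploit that the drift of $\tilde z^\varepsilon$ is linear in $\tilde z^\varepsilon$ with the \emph{fixed} constant $L$, so that the drift contributions $CL\lambda^{-1}\|\tilde z^\varepsilon\|_{\lambda,t}$ and $CL\lambda^{2\alpha-1}\|\tilde z^\varepsilon\|_{\lambda,t}$ can be absorbed by a deterministic $\lambda$ independent of $\Lambda$ and $u^\varepsilon$; and you bound the noise terms directly by $C(\Lambda+\sqrt{2N})(1+\|\tilde x^\varepsilon\|_{\alpha,\infty})$ using $\int_0^r|\tilde x^\varepsilon_r-\tilde x^\varepsilon_q|/(r-q)^{\alpha+1}dq\le\|\tilde x^\varepsilon\|_{\alpha,\infty}$. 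This bypasses \lemref{lem3-2} entirely and produces the sharper polynomial pathwise estimate $\|\tilde z^\varepsilon\|_{\alpha,\infty}\le C(\Lambda+\sqrt{2N})(1+\|\tilde x^\varepsilon\|_{\alpha,\infty})$, from which the moment bound follows at once by Cauchy--Schwarz, Fernique, and \lemref{lem3-1}. The paper's route has the minor side benefit that \lemref{lem3-2} is recorded as a standalone result, but your argument is more direct for the statement at hand.
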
	
\begin{proof}
	In this proof, $C$ is a positive constant depending on only $p$ and $N$, which may change from line to line.	Firstly, still denote $\Lambda:=\Lambda_\alpha\left(B^H\right) \vee 1$.
	 {Similar to \eqref{3-2},	by Assumption (\textbf{A1}), (\textbf{A2}), the condition that $1/h(\varepsilon)\in(0,1]$ and  the estimate  \eqref{Lamada}}, we deduce that
	{\begin{eqnarray}\label{3-2'}
	\|\tilde z^{\varepsilon}\|_{\lambda, t} &=&\sup _{0 \leq s \leq t} e^{-\lambda s}\left|\int_0^s \frac{1}{\sqrt{\varepsilon}h(\varepsilon)}[f(\tilde x_r^{\varepsilon})- f( x_r)]d r+\int_0^s \sigma( \tilde x_r^{\varepsilon}) d u_r^\varepsilon +\frac{1}{h(\varepsilon)}\int_0^s \sigma( \tilde x_r^{\varepsilon}) d B_r^H\right| \cr
	& \leq&   {\sup _{0 \leq s \leq t} e^{-\lambda s}\left|\int_0^s \frac{1}{\sqrt{\varepsilon}h(\varepsilon)}[f(\tilde x_r^{\varepsilon})- f( x_r)]d r\right| }\cr
	&&  {+ \sup _{0 \leq s \leq t} e^{-\lambda s}\left|\int_0^s \sigma( \tilde x_r^{\varepsilon}) d u_r^\varepsilon\right| +  \sup _{0 \leq s \leq t} e^{-\lambda s}\left|\frac{1}{h(\varepsilon)}\int_0^s \sigma( \tilde x_r^{\varepsilon}) d B_r^H\right|} \cr
	& \leq& C \sup _{0 \leq s \leq t}  {|\tilde z^{\varepsilon}_s| } + C \|u^\varepsilon\|_{\mathcal{H}^{H,d_1}}\big[1+\sup _{0 \leq s \leq t} \int_0^s e^{-\lambda(s-r)}\big(r^{-\alpha}\|\tilde x^{\varepsilon}\|_{\lambda, t}+\|\tilde x^{\varepsilon}\|_{1, \lambda, t}\big) d r\big] \cr
	&& + C\Lambda \big[\sup _{0 \leq s \leq t} \int_0^s e^{-\lambda(s-r)}\big(r^{-\alpha}\|\tilde x^{\varepsilon}\|_{\lambda, t}+\|\tilde x^{\varepsilon}\|_{1, \lambda, t}\big) d r\big] \cr
	& \leq& C \sup _{0 \leq s \leq t} {|\tilde z^{\varepsilon}_s| } + C(\sqrt{2N}+\Lambda) \big[1+\|\tilde x^{\varepsilon}\|_{\lambda, t}+\|\tilde x^{\varepsilon}\|_{1, \lambda, t} \big] ,
	\end{eqnarray}}
	where the  second inequality is due to the fact that $\Lambda_\alpha(u^\varepsilon)\le C \|u^\varepsilon\|_{\mathcal{H}^{H,d_1}}$. 	
	
	Next, we will  compute $\|\tilde z^{\varepsilon}\|_{1, \lambda, t}$. 
	By taking  similar computation from (\ref{3-5}) to (\ref{3-8}),  we have
	{\begin{eqnarray}\label{3-9'}
	\|\tilde z^{\varepsilon}\|_{1, \lambda, t}&\le& \sup _{0 \leq s \leq t} e^{-\lambda s} \int_0^s(s-r)^{-\alpha-1}\left|\int_r^s \frac{1}{\sqrt{\varepsilon}h(\varepsilon)}[f( \tilde x_q^{\varepsilon}) -f(  x_q)]d q\right| d r \cr
	&&+\sup _{0 \leq s \leq t} e^{-\lambda s} \int_0^s(s-r)^{-\alpha-1}\left|\int_r^s \sigma( \tilde x_q^{\varepsilon}) d u_q^\varepsilon\right| d r \cr
	&&+\sup _{0 \leq s \leq t} e^{-\lambda s} \int_0^s(s-r)^{-\alpha-1}\left|\int_r^s \frac{1}{h(\varepsilon)}\sigma( \tilde x_q^{\varepsilon}) d B_q^H\right| d r \cr
	&\le & C  \big[\sup _{0 \leq s \leq t}  {|\tilde z^{\varepsilon}_s| } \sup _{0 \leq s \leq t}\int_0^s {(s-r)}^{-\alpha}d r\big] + (\Lambda+\|u^\varepsilon\|_{\mathcal{H}^{H,d_1}})\cr
	& &\left(1+\sup _{0 \leq s \leq t} \int_0^s e^{-\lambda(s-r)}\times\left[(s-r)^{-2 \alpha}\|\tilde x^{\varepsilon}\|_{\lambda, t}+(s-r)^{-\alpha}\|\tilde x^{\varepsilon}\|_{1, \lambda, t}\right] d r\right) \cr
	&\le & C\sup _{0 \leq s \leq t} {|\tilde z^{\varepsilon}_s| } +C (\Lambda+\|u^\varepsilon\|_{\mathcal{H}^{H,d_1}})\left(1+\lambda^{2 \alpha-1}\|\tilde x^{\varepsilon}\|_{\lambda, t}+\lambda^{\alpha-1}\|\tilde x^{\varepsilon}\|_{1, \lambda, t}\right)\cr
	&\le & C\sup _{0 \leq s \leq t}  {|\tilde z^{\varepsilon}_s| } +C (\Lambda+\sqrt{2N})\left(1+\|\tilde x^{\varepsilon}\|_{\lambda, t}+\|\tilde x^{\varepsilon}\|_{1, \lambda, t}\right),
	\end{eqnarray}}
	where the second inequality is from Assumptions (\textbf{A1}),   {(\textbf{A2})} and the condition that $1/h(\varepsilon)< 1$.  The final inequality comes from    the condition $2\alpha<1$.
	Then, by using \eqref{3-12} and \eqref{3-13}, we can see 
	\begin{eqnarray*}\label{3-14'}
		\|\tilde z^{\varepsilon}\|_{\alpha, \infty} & \le&  {C e^{\lambda T} \big(\sup _{0 \leq s \leq t} |\tilde z^\varepsilon_s| +(\Lambda+\sqrt{2N})\big(1+\|\tilde x^{\varepsilon}\|_{ \lambda, t}+\|\tilde x^{\varepsilon}\|_{1, \lambda, t}\big)\big)}\cr
		& \le& C \exp \big(C (\Lambda+\sqrt{2N})^{1 /(1-\alpha)}\big)\cr
		&& { \big(\sup _{0 \leq s \leq t} |\tilde z^\varepsilon_s| +(\Lambda+\sqrt{2N})\big(1+C(\Lambda+\sqrt{2N})^{1/(1-\alpha)}\big)\big)}
	\end{eqnarray*}
	 {By using  \lemref{lem3-1}, \lemref{lem3-2}},  H\"older's inequality and the   {Fernique theorem \cite{Fernique}}, we complete the proof.
\end{proof}
\textbf{Proof of \thmref{thm1}}:

\textbf{Step 1}. 
Let $u^{(j)}$ and $u$ belong to $S_N$ such that $u^{(j)}\rightarrow u$ as $j\rightarrow\infty$ in the weak topology of $\mathcal{H}^{H,d_1}$. 
 {In this step, we proceed} to show the following convergence:
\begin{eqnarray} \label{3-21}
\mathcal{G}^{0}(u^{(j)} )\rightarrow\mathcal{G}^{0}(u) 
\end{eqnarray}
in $C^{\alpha}([0,T],\mathbb{R}^m)$ as $j \to \infty$.

Because $\{u^{(j)}\}_{j\ge 1}\in\mathcal{H}^{H,d_1}$ is bounded, 	by the  {continuous embedding} that 
$\mathcal{H}^{H,d_1}\subset C^{1-\alpha}([0,T], \mathbb{R}^{d_1})$,  
we have that $\{u^{(j)}\}_{j\ge 1} \in C^{1-\alpha}([0,T], \mathbb{R}^{d_1})$ is bounded too. Let $\{\tilde {z}^{(j)}\}_{j\ge 1}$ be a family of solutions to the equation (\ref{3'}), that is,
\begin{eqnarray} \label{3-22}
d\tilde {z}^{(j)}_t =  D{f}(  {x}_t)\tilde z^{(j)}_tdt +  {\sigma( {x}_t)}du^{(j)}_t,\quad \tilde {z}^{(j)}_0=0.
\end{eqnarray}	
It is not too difficult to see that for any fixed $x\in \mathbb{R}^m$, $z \mapsto Df(x)z$ is linear from $\mathbb{R}^m$ to $\mathbb{R}^m$. According to \cite[Proposition 3.6]{2020Budhiraja}, there exists a unique solution to (\ref{3-22}) and $\{\tilde {z}^{(j)}\}_{j\ge 1}\in C^{1-\alpha}([0,T],\mathbb{R}^m)$, moreover,
$$\|\tilde {z}^{(j)}\|_{1-\alpha}\le C,$$
where $C>0$ is independent of $j$.
Due to the compact inclusion  $C^{1-\alpha}([0,T],\mathbb{R}^m)\subset C^{\alpha}([0,T],\mathbb{R}^m)$ for $\alpha<1-\alpha$,  the sequence $\{\tilde {z}^{(j)}\}_{j\ge 1}$ is pre-compact in $C^{\alpha}([0,T],\mathbb{R}^m)$. 
Let $\tilde z$ be any limit point.  {Then, there exists  a subsequence of $\{\tilde z^{(j)}\}_{ j\ge 1}$  converges to $\tilde z$ in $ C^{\alpha}([0,T],\mathbb{R}^m)$ (denoted by the same symbol).}

We will show that the limit point $\tilde z$ satisfies the following differential equation,
\begin{eqnarray} \label{3-44}
d\tilde {z}_t =  D{f}(  {x}_t)\tilde z_tdt + \sigma( {x}_t)du_t \quad \tilde {z}_0=0.
\end{eqnarray}
For any $u=\mathcal{K_H} {\dot u} \in \mathcal{H}^{H,d_1}$, recall that $u$ is differentiable and 
\begin{eqnarray}\label{3-29}
u^{\prime}(t)=c_H t^{H-\frac{1}{2}}\left(I_{0+}^{H-\frac{1}{2}}\left(\psi^{-1} \dot{u}\right)\right)(t)=\frac{c_H t^{H-\frac{1}{2}}}{\Gamma\left(H-\frac{1}{2}\right)} \int_0^t(t-s)^{H-\frac{3}{2}} s^{\frac{1}{2}-H} \dot{u}(s) d s
\end{eqnarray}
with $\psi(u)=u^{H-\frac{1}{2}}$  {\cite[Lemma 2.3]{2020Budhiraja}}.
According to a direct   computation, we see that
\begin{eqnarray*}\label{3-30}
	|\tilde z^{(j)}_t-\tilde z_t| &\le&|\int_0^t (Df( x_s)\tilde z^{(j)}_s -Df(x_s)\tilde z_s)d s |+|\int_0^t \sigma(  x_s) (d u_s^{(j)}-d u_s) |\cr
	&=:& G_1+G_2.
\end{eqnarray*}
Then, by the Assumption (\textbf{A3}) and fact that $x$ is globally bounded, we have 
\begin{eqnarray*}\label{3-31}
	G_1 =|\int_0^t (Df( x_s)\tilde z^{(j)}_s -Df(x_s)\tilde z_s)d s | \le C\int_0^t |\tilde z_s^{(j)} -\tilde z_s|d s \le C\sup_{ t\in[0,T]}|\tilde z_t^{(j)} -\tilde z_t| .
\end{eqnarray*}
Since  $\{\tilde z^{(j)}\}_{ j\ge 1}$ converges to $\tilde z$ in the uniform topology, we have that
$G_1\to0$ as $j \to \infty$.

Then, it proceeds to estimate the second term $G_2$. By Fubini's theorem, we have 
\begin{eqnarray}\label{3-35}
G_2 &=&{\big|\int_0^t \sigma( x_s)(d u^{(j)}_s-d u_s) \big|} \cr
&=&{\bigg|\int_0^t \sigma( x_s) s^{H-\frac{1}{2}}\big[\int_0^s(s-r)^{H-\frac{3}{2}} r^{\frac{1}{2}-H}(\dot u^{(j)}_r-\dot u_r)dr\big] ds \bigg|} \cr
&=&{\bigg|\int_0^T \mathbf{1}_{[0,t]}(r) r^{\frac{1}{2}-H}\big[\int_r^t(s-r)^{H-\frac{3}{2}} s^{H-\frac{1}{2}}\sigma( x_s)ds\big]  (\dot u^{(j)}_r-\dot u_r)dr \bigg|}.
\end{eqnarray}
 {Let 
$ H_t(r)=\big[\int_r^t(s-r)^{H-\frac{3}{2}} s^{H-\frac{1}{2}}\sigma( x_s)ds\big] $}. Then,   according to H\"older's inequality and fact that  $x$ is globally bounded, we have
$|H_t(r)|\le C(1+\| x\|_{\infty})\le C$. Note that the last term in \eqref{3-35} is the $L_2$-inner product of $\dot u^{(j)}-\dot u$ and $r\mapsto \mathbf{1}_{[0,t]}(r) r^{\frac{1}{2}-H}H_t(r)$.
Since $\dot u^{(j)}\rightarrow\dot u$ as $j\rightarrow\infty$ in the weak topology of $L^2$,  $G_2$ converges to 0 as $j\rightarrow\infty$ for every fixed $t$.

Hence, $\{\tilde z^{(j)}\}_{ j\ge 1}$ has a unique limit point $\tilde z$, which implies $\lim_{j \to \infty}\tilde z^{(j)}=\tilde z$ in $ C^{\alpha}([0,T],\mathbb{R}^m)$.
The proof of Step 1 is completed.


\textbf{Step 2}.
In this step,  $N\in\mathbb{N}$ and $\varepsilon\in(0,1]$. We will let $\varepsilon \to 0$. The constant $C$ is  positive and may depend only on $p$ and $N$. It may vary from line to line.

Assume  $u^{\varepsilon}\in \mathcal{A}^{N}_b$    converges  in distribution to $u$ as $\varepsilon \to 0$. We  rewrite  the controlled deviation component (\ref{3.3}) as follows,
\[
\tilde {z}^{\varepsilon}:=\mathcal{G}^{\varepsilon}\bigg(\frac{B^H}{h(\varepsilon)}  +u^{\varepsilon}\bigg).
\]
In the following proof, we will show  that  as $\varepsilon\rightarrow 0$, $\tilde {z}^{\varepsilon}$  converges to $\tilde {z}$ in $ C^{\alpha}([0,T],\mathbb{R}^m)$ (in distribution), that is,
\begin{eqnarray} \label{3-34}
\mathcal{G}^{\varepsilon}\bigg( \frac{B^H}{h(\varepsilon)} +u^{\varepsilon}\bigg)\rightarrow\mathcal{G}^{0}(u )\quad \text{(in distribution)}.
\end{eqnarray}	
To this end, we construct the auxiliary process as follows,
\begin{eqnarray}\label{3-37}
d\hat z_t^\varepsilon=Df(x_t)\hat z_t^\varepsilon dt+\sigma(x_t)du_t^\varepsilon,\quad \hat z_0^\varepsilon=0.
\end{eqnarray}

Similar  to \lemref{lem3-2}, by using the fact that  $x\in C^1([0,T],\mathbb{R}^m)$, there exists $C>0$ such that, for every $u^{\varepsilon}\in \mathcal{A}_b^N$ and $p\geq 1$,
\begin{equation}\label{3-42}
\mathbb{E}\big[\sup_{ t\in[0,T]}|\hat z^{\varepsilon}_t|^p\big] \leq C.
\end{equation}
Then, based on  (\ref{3-42}) and take a similar manner to \lemref{lem3-3}, there exists   $C>0$ such that, for every $u^{\varepsilon}\in \mathcal{A}_b^N$ and $p\geq 1$,
\begin{equation}\label{z}
\mathbb{E}\big[\|\hat z^{\varepsilon}\|_{\alpha, \infty}^p\big] \leq C.
\end{equation}

Next, we will   estimate 
\begin{eqnarray*}\label{3-38}
	\mathbb{E}\big[\sup _{t \in[0, T]}\|\tilde z^{\varepsilon}-\hat{z}^{\varepsilon}\|_{\alpha,[0,t]}^2\big].
\end{eqnarray*}
Let $R>0$ be large enough and define the stopping times $\tau_R:=\inf \left\{t \in[0,T]:\|B^H\|_{1-\alpha, \infty, t} \geq R\right\}$.
Firstly, we have
\begin{eqnarray}\label{b3-40}
\mathbb{E}\Big[\sup _{t \in[0, T]}\left\|\tilde {z}^{\varepsilon}-\hat{z}^{\varepsilon}\right\|_{\alpha,[0,t]}^2\Big] &\leq&  \mathbb{E}\left[\sup _{t \in[0, T]}\left\|\tilde {z}^{\varepsilon}-\hat{z}^{\varepsilon}\right\|_{\alpha,[0,t]}^2 I_{\{\tau_R \le T\}}\right] \cr
&&+\mathbb{E}\left[\sup _{t \in[0, T]}\left\|\tilde {z}^{\varepsilon}-\hat{z}^{\varepsilon}\right\|_{\alpha,[0,t]}^2I_{\{\tau_R > T\}}\right].
\end{eqnarray}
For the first term on the right hand side of (\ref{b3-40}), by  H\"older's inequality, the definition of stopping time, \lemref{lem3-3} and \eqref{z}, we have
{\begin{eqnarray}\label{b3-41}
&&\mathbb{E}\Big[\sup _{t \in[0, T]}\left\|\tilde {z}^{\varepsilon}-\hat{z}^{\varepsilon}\right\|_{\alpha,[0,t]}^2 I_{\{\tau_R \le T\}}\Big] \cr
&\leq & \mathbb{E}\Big[\sup _{t \in[0, T]}\left\|\tilde {z}^{\varepsilon}-\hat{z}^{\varepsilon}\right\|_{\alpha,[0,t]}^4 \Big]^{1/2}\mathbb{P}(\tau_R \le T)^{1/2} \cr
&\leq & \mathbb{E}\Big[\sup _{t \in[0, T]}\left\|\tilde {z}^{\varepsilon}-\hat {z}^{\varepsilon}\right\|_{\alpha,[0,t]}^4 \Big]^{1/2}\times R^{-1}\sqrt{ \mathbb{E}\left[\left\|B^H\right\|_{1-\alpha, \infty, T}^2\right]} \cr
&\leq& C R^{-1}\sqrt{ \mathbb{E}\left[\left\|B^H\right\|_{1-\alpha, \infty, T}^2\right]}.
\end{eqnarray}}

Now we compute the second term on the right hand side of  (\ref{b3-40}).
Define $D:=\{\|B^H\|_{1-\alpha, \infty, T} \leq R\}$. For $\lambda\geq 1$, we will estimate the following 
\begin{eqnarray*}\label{b3-39}
	\mathbf{K}:=\mathbb{E}\left[\sup _{t \in[0, T]} e^{-\lambda t}\|\tilde z^{\varepsilon}-\hat{z}^{\varepsilon}\|_{\alpha,[0,t]}^2 \mathbf{1}_D\right].
\end{eqnarray*}
It is easy to see that,
\begin{eqnarray*}\label{3-40}
	\mathbf{K} &\leq & C \mathbb{E}\left[\sup _{t \in[0, T]}e^{-\lambda t}\left\|\int_0^\cdot[\frac{1}{\sqrt{\varepsilon}h(\varepsilon)}\big(f( \tilde x_s^{\varepsilon})-f(  x_s)\big)-Df(x_s)\hat{z}_s^\varepsilon] d s\right\|_{\alpha,[0,t]}^2 \mathbf{1}_D\right] \cr
	&&+C \mathbb{E}\left[\sup _{t \in[0, T]}e^{-\lambda t}\left\|\int_0^\cdot\big(\sigma( \tilde x_s^{\varepsilon})-\sigma(  x_{s})\big) d u_s^{\varepsilon}\right\|_{\alpha,[0,t]}^2 \mathbf{1}_D\right] \cr
	&&+C \mathbb{E}\left[\frac{1}{h^2(\varepsilon)}\sup _{t \in[0, T]}e^{-\lambda t}\left\|\int_0^\cdot\sigma_1( \tilde x_s^{\varepsilon}) d B_s^H\right\|_{\alpha,[0,t]}^2 \mathbf{1}_D\right] \cr
	&=:& K_1+K_2+K_3.
\end{eqnarray*}
Before computing $K_i,i=1,2,3$, we give  prior estimates. For a measurable function $g:[0,T]\to \mathbb{R}^d$,  we set $A=:\|\int_{0}^{\cdot} gds\|_{{\alpha,[0,t]}}$. We can estimate $A$ as follows:
\begin{eqnarray}\label{3-39}
A & \leq&\left|\int_0^t g(s) d s\right|+\int_0^t(t-s)^{-1-\alpha} \int_s^t|g(r)| d r d s \cr
& \leq& t^\alpha \int_0^t(t-r)^{-\alpha}|g(r)| d r+C
\int_0^t(t-r)^{-\alpha}|g(r)| dr \cr
& \leq& C \int_0^t(t-r)^{-\alpha}|g(r)| d r.
\end{eqnarray}

Then, we rearrange $K_1$ and use   {Taylor's expansion}, 
\begin{eqnarray}\label{3-41}
K_1 &\leq & C \mathbb{E}\left[\sup _{t \in[0, T]}e^{-\lambda t}\left\|\int_0^\cdot[\frac{1}{\sqrt{\varepsilon}h(\varepsilon)}\big(f( \tilde x_s^{\varepsilon})-f(  x_s)\big)-Df(x_s)\hat{z}_s^\varepsilon] d s\right\|_{\alpha,[0,t]}^2 \mathbf{1}_D\right] \cr
&\leq & C\mathbb{E}\Big[\sup _{t \in[0, T]}\int_{0}^{t} e^{-\lambda(t-s)}(t-s)^{-2\alpha}e^{-\lambda s}\big|\frac{1}{\sqrt{\varepsilon}h(\varepsilon)}\big(f( \tilde x_s^{\varepsilon})-f(  x_s)\big)-Df(x_s)\hat{z}_s^\varepsilon\big|^2 \mathbf{1}_Dds\Big] \cr
&\le & C\mathbb{E}\Big[\sup _{t \in[0, T]}\int_{0}^{t} e^{-\lambda(t-s)}(t-s)^{-2\alpha}e^{-\lambda s}\big|\int_{0}^{1}\big(Df(x_s+\theta(\tilde x_s^{\varepsilon}-x_s) )-Df(x_s)\big)\tilde{z}^\varepsilon_s d\theta\big|^2 \mathbf{1}_Dds\Big] \cr
&&+C\mathbb{E}\Big[\sup _{t \in[0, T]}\int_{0}^{t} e^{-\lambda(t-s)}(t-s)^{-2\alpha}e^{-\lambda s}\big|Df(x_s)\big|^2\big|\tilde z_s^{\varepsilon}-\hat z_s^{\varepsilon}\big|^2 \mathbf{1}_Dds\Big] \cr
&\leq & C\mathbb{E}\Big[\sup _{t \in[0, T]}\int_{0}^{t} e^{-\lambda(t-s)}(t-s)^{-2\alpha}e^{-\lambda s}\big(\big|\tilde{z}_s^\varepsilon-\hat{z}_s^\varepsilon\big|^2+\big|R^\varepsilon_s\big|^2 \big)\mathbf{1}_Dds\Big].
\end{eqnarray}
Here,  $R_s^\varepsilon:=\int_{0}^{1}\big(Df(x_s+\theta(\tilde x_s^{\varepsilon}-x_s) )-Df(x_s)\big)\tilde{z}^\varepsilon_s d\theta$ which satisfies $\sup_{ t\in[0,T]}|R_t^\varepsilon|\le C\sqrt{\varepsilon}h(\varepsilon)\sup_{ t\in[0,T]}|\tilde z_t^\varepsilon|^2$,  due to (\textbf{A3}).
Then by condition that  $2\alpha<1$, \lemref{lem3-2} and \eqref{3-10}, we have
\begin{eqnarray}\label{3-46}
K_1 
& \leq& C\mathbb{E}\left[\sup _{t \in[0, T]} e^{-\lambda t}\big\|\tilde{z}^{\varepsilon}-\hat{z}^{\varepsilon}\big\|_{\alpha,[0,t]}^2  {\mathbf{1}_D}\right] \sup _{t \in[0, T]} \int_0^t e^{-\lambda(t-r)}(t-r)^{-2 \alpha} d r \cr
&&+ C\varepsilon h^2(\varepsilon) \int_0^Te^{-\lambda(t-s)}(t-s)^{-2\alpha}\mathbb{E}\big[\big|\tilde z_s^{\varepsilon}\big|^4\mathbf{1}_D\big]d s  \cr
&\leq &C \lambda^{2 \alpha-1} \mathbb{E}\big[\sup_{t \in[0, T]}e^{-\lambda t} \big\|\tilde{z}^{\varepsilon}-\hat{z}^{\varepsilon}\big\|_{\alpha,[0,t]}^2  {\mathbf{1}_D}\big]+C\varepsilon h^2(\varepsilon).
\end{eqnarray}

Before estimating $K_2$, we firstly compute the following: for a  measurable function $g:[0,T]\to \mathbb{R}^{m\times {d_1}}$, $${B}:=\left\|\int_0^\cdot g(r) d u^{\varepsilon}_r\right\|_{\alpha,[0,t]}.$$ 
By using \propref{p1} and \eqref{3-8}, we have
\begin{eqnarray}\label{3-43}
{B} &\leq& C \|u^{\varepsilon}\|_{1-\alpha,\infty,T} \int_0^t\left((t-r)^{-2 \alpha}+r^{-\alpha}\right)\left(|g(r)|+\int_0^r \frac{|g(r)-g(q)|}{(r-q)^{1+\alpha}} d q\right) d r\cr
&\leq& C \int_0^t\left((t-r)^{-2 \alpha}+r^{-\alpha}\right)\|g\|_{\alpha,[0,r]} d r.
\end{eqnarray}
Here, we used condition that  $\|u^{\varepsilon}\|_{1-\alpha,\infty,T}\le C\|u^{\varepsilon}\|_{\mathcal{H}^{H,d_1}}\le C\sqrt{2N} $.

With aid of  \cite[Lemma 7.1]{2002Rascanu}, we  estimate $K_2$ as follows
\begin{eqnarray*}\label{3-45}
	K_2 &\le &C \mathbb{E}\left[\sup _{t \in[0, T]}\left|\int_0^t e^{-\lambda t}\left[(t-r)^{-2 \alpha}+r^{-\alpha}\right]\big\|\sigma_1( \tilde {x}^{\varepsilon})-\sigma_1( {x})\big\|_{\alpha,[0,r]} \mathbf{1}_D d r\right|^2\right] \cr
	&\leq &C \mathbb{E}\left[\sup _{t \in[0, T]} \mid \int_0^t e^{-\lambda t}\left[(t-r)^{-2 \alpha}+r^{-\alpha}\right]\right.\cr
	&&\left.\times\left.\big(1+\Delta(\tilde{x}_r^{\varepsilon})+\Delta({x}_r)\big)\big\|\tilde {x}^{\varepsilon}-{x}\big\|_{\alpha,[0,r]} \mathbf{1}_D d r\right|^2\right],
\end{eqnarray*}
where $\Delta(\tilde{x}_r^{\varepsilon})=\int_0^r \frac{|\tilde{x}_r^{\varepsilon}-\tilde{x}_q^{\varepsilon}|}{(r-q)^{1+\alpha}} d q$ and $\Delta({x}_r)=\int_0^r \frac{|{x}_r-{x}_q|}{(r-q)^{1+\alpha}} d q$. By \lemref{lem3-1},  {$\Delta(\tilde{x}_r^{\varepsilon})\le \|\tilde x^{\varepsilon}\|_{\alpha, \infty}$}, which in turn is dominated by  $C(R,N)>0$ independent of $\varepsilon$. (Here, the condition $\|B^H\|_{1-\alpha, \infty, T} \leq R$ is used.) It is obvious that $\Delta({x}_r)\le C$ for some constant $C$.
Next, by the definition of $\tilde{z}^\varepsilon$,  \lemref{lem3-3},  \eqref{3-3} and \eqref{3-10}, we  see that
\begin{eqnarray}\label{3-47}
K_2 \le C  \varepsilon h^2(\varepsilon)\lambda^{2\alpha-1}\mathbb{E}\big[\sup_{t \in[0, T]}e^{-\lambda t} \big\|\tilde{z}^{\varepsilon}\big\|_{\alpha,[0,t]}^2 \big]
\le C  \varepsilon h^2(\varepsilon).
\end{eqnarray}

For the third term $K_3$, by  \lemref{lem3-1}, we have
\begin{eqnarray}\label{3-48}
K_3 &\le &\frac{1}{h^2(\varepsilon)} C \mathbb{E}\left[\sup _{t \in[0, T]}e^{-\lambda t}\left\|\int_0^\cdot\sigma_1( \tilde x_s^{\varepsilon}) d B_s^H\right\|_{\alpha,[0,t]}^2 \mathbf{1}_D\right]\le \frac{C}{h^2(\varepsilon)} 
\end{eqnarray}
for some constant $C=C(R,N)>0$ independent of $\varepsilon$.

Combining   {(\ref{3-46}), (\ref{3-47}) and  (\ref{3-48}), by taking large enough $\lambda$ such that $C\lambda^{2\alpha-1}<1$,} we have
\begin{eqnarray*}\label{3-49}
	\mathbb{E}\left[\sup _{t \in[0, T]} e^{-\lambda t}\|\tilde z^{\varepsilon}-\hat{z}^{\varepsilon}\|_{\alpha,[0,t]}^2 \mathbf{1}_D\right]\le  C\varepsilon h^2(\varepsilon)+\frac{C}{h^2(\varepsilon)}.
\end{eqnarray*}
Multiplying $e^{\lambda T}$, we obtain
\begin{eqnarray*}\label{3-50}
	\mathbb{E}\left[\sup _{t \in[0, T]} \|\tilde z_t^{\varepsilon}-\hat{z}_t^{\varepsilon}\|_{\alpha,[0,t]}^2 \mathbf{1}_D\right]\le  C\varepsilon h^2(\varepsilon)+\frac{C}{h^2(\varepsilon)}.
\end{eqnarray*}
With aid of  (\ref{b3-41}), we have
\begin{eqnarray*}\label{3-51'}
	\mathbb{E}\big[\sup _{t \in[0, T]}\|\tilde z^{\varepsilon}-\hat{z}^{\varepsilon}\|_{\alpha,[0,t]}^2\big] \leq  C\varepsilon h^2(\varepsilon)+\frac{C}{h^2(\varepsilon)}+C^{\prime}R^{-1}
\end{eqnarray*}
for certain constants $C=C(R,N)>0$ independent of $\varepsilon$, and $C^{\prime}$  independent of $\varepsilon,R$.
As $\varepsilon \to 0$,  we have $\varepsilon h^2(\varepsilon)\to 0$ and $\frac{1}{h^2(\varepsilon)}\to 0$. Take $\limsup_{\varepsilon \to 0}$  for each fixed large $R$, then  let $R \to\infty$. Then we have
\begin{eqnarray*}\label{3-52'}
	\lim_{\varepsilon \to 0}	\mathbb{E}\big[\sup _{t \in[0, T]}\|\tilde z^{\varepsilon}-\hat{z}^{\varepsilon}\|_{\alpha,[0,t]}^2\big] =0.
\end{eqnarray*}
According to the   continuous inclusion $C^{\alpha+\kappa} \subset W_0^{\alpha, \infty} \subset C^{\alpha-\kappa}$ for $0<\kappa<\alpha$, we obtain 
\begin{eqnarray*}\label{3-81}
	\lim_{\varepsilon \to 0}\mathbb{E}\big[\|\tilde z^{\varepsilon}-\hat{z}^{\varepsilon}\|_{\alpha\textrm{-hld}}^2\big]= 0 \quad \text{for every} \quad \alpha<1/2.
\end{eqnarray*}

According to step 1, if  $u^{\varepsilon}\in \mathcal{A}^{N}_b$  converges in distribution to $u$ as $\varepsilon \to 0$, then,  $\hat z^{\varepsilon}=\mathcal{G}^0 (u^{\varepsilon})$  converges in disribution to $\tilde z=\mathcal{G}^0 (u)$ in $ C^{\alpha}([0,T],\mathbb{R}^m)$ as $\varepsilon \to 0$. 
Then,  for any  {bounded  Lipschitz continuous function} $F:C^{\alpha}([0, T]; \mathbb{R}^m) \to\mathbb{R}$,  as $\varepsilon \to 0$, we have
\begin{eqnarray}\label{3-82}
|\mathbb{E}[F(\tilde z^{\varepsilon})]-\mathbb{E}[F(\tilde z)]|&\le& |\mathbb{E}[F(\tilde z^{\varepsilon})]-\mathbb{E}[F(\hat z^{\varepsilon})]|+|\mathbb{E}[F(\hat z^{\varepsilon})]-\mathbb{E}[F(\tilde z)]|\cr
&\le& \|F\|_\textrm{Lip}\mathbb{E}[\|\tilde z^{\varepsilon}-\hat z^{\varepsilon}\|_{\alpha\textrm{-hld}}^2]^{\frac{1}{2}}+|\mathbb{E}[F(\hat z^{\varepsilon})]-\mathbb{E}[F(\tilde z)]|.
\end{eqnarray} 
where $\|F\|_\textrm{Lip}$ is the Lipschitz constant of function $F$. As $\varepsilon \to 0$, the right hand side of \eqref{3-82} tends to $0$. Then, by  {Portmanteau}'s theorem \cite[Theorem 13.16]{2020Klenke}, we obtain the desired  convergence in distribution (\ref{3-34}). The proof of Step 2 is finished.

\textbf{Step 3}. By step 1 and step 2, we can show the goodness of $I$, the Laplace upper bound
\begin{eqnarray}\label{5-5}
\limsup _{\varepsilon \rightarrow 0} -\frac{1}{h^2(\varepsilon)}  \log \mathbb{E}\left[e^{-F(z^{\varepsilon}) h^2{(\varepsilon)}}\right] \leq\inf _{\phi \in C^{\alpha}([0,T],\mathbb{R}^m)}\left[F(\phi)+I(\phi)\right]
\end{eqnarray}
and the lower bound 
\begin{eqnarray}\label{5-6}
\liminf _{\varepsilon \rightarrow 0} -\frac{1}{h^2(\varepsilon)} \log \mathbb{E}\left[e^{-F(z^{\varepsilon}) h^2{(\varepsilon)}}\right] \geq\inf _{\phi \in C^{\alpha}([0,T],\mathbb{R}^m)}\left[F(\phi)+I(\phi)\right]
\end{eqnarray}
for  every bounded and continuous     function $F: C^{\alpha}([0,T],\mathbb{R}^m)\to \mathbb{R}$
in the same manner as in \cite[Theorem 3.1]{2023Inahama}.
Since the Laplace principle \eqref{5-5}--\eqref{5-6} and the large deviation principle are equivalent  under the goodness of $I$, the theorem is proved.  \qed


\section{MDP for the two-time scale system}	
 {In this section, we 
 aim  to show that the slow variable $x^\varepsilon$ of the two-time scale SDEs (\ref{1}) satisfies an MDP with speed $b(\varepsilon)=1/h^2(\varepsilon)$.} Firstly, we give assumptions and the  statement of our main result.
\subsection{Preliminaries, Assumptions and  Main Result}
To ensure the existence and uniqueness of solutions to the two-time scale SDE (\ref{1}), we impose the following conditions:
\begin{itemize}
	\item[(\textbf{A1'}).] Assume that the function $\sigma_1$ is of  $C^1$ and there exists a  constant $L> 0$ such that for  any $ x_1 , x_2\in \mathbb{R}^{m}$, 
	$$\left|D \sigma_1\left( x_1\right)\right| \leq L,	\qquad
	\left|D \sigma_1\left( x_1\right)-D \sigma_1\left( x_2\right)\right| \leq L\left|x_1-x_2\right|$$
	hold. Here, $D$ is the standard gradient operator on $\mathbb{R}^{m}$.
	\item[(\textbf{A2'}).] Assume that there exists a constant $L> 0$ such that, for  any $ (x_1,y_1) $,  $ (x_2,y_2)\in \mathbb{R}^{m} \times\mathbb{R}^{n}$, 
	\begin{equation*}
	\begin{aligned}
	&\left|f_1\left( x_1, y_1\right)-f_1\left( x_2, y_2\right)\right| +
	\left|f_2\left(x_1, y_1\right)-f_2\left(x_2, y_2\right)\right|
	\\
	&\qquad \qquad 
	+\left|\sigma_2\left(x_1, y_1\right)-\sigma_2\left(x_2, y_2\right)\right|  \leq L\left(\left|x_1-x_2\right|+\left|y_1-y_2\right|\right), 
	\end{aligned}
	\end{equation*}	
	and that, for any $x_1\in \mathbb{R}^m$,
	 {	\begin{equation*}
	\sup_{y_1\in\mathbb{R}^{n}}(\left|f_1\left(x_1, y_1\right)\right| +\left|\sigma_2\left(x_1, y_1\right)\right| ) \leq L\left(1+\left|x_1\right|\right)
	\end{equation*}}
	hold.
\end{itemize}	
Clearly, (\textbf{A2'}) implies that there exists a constant $L^{\prime}>0$ such that for any $(x,y)\in \mathbb{R}^m \times \mathbb{R}^n$, 	$$
\left|f_2\left(x, y\right)\right| \leq L^{\prime}\left(1+\left|x\right|+\left|y\right|\right)$$
holds.
Under Assumptions (\textbf{A1'}) and (\textbf{A2'}),   {we can deduce}   from \cite[Theorem 2.2]{Guerra} that the original two-time scale system (\ref{1}) admits a unique (pathwise) solution $(x^{\varepsilon},y^{\varepsilon})\in C^\alpha([0,T], \mathbb{R}^m) \times C([0,T], \mathbb{R}^n)$ with any $\alpha \in (1-H, 1/2)$. 

In order to study 
an MDP for the system (\ref{1}),
we further impose: 	
\begin{itemize}
	\item[(\textbf{A3'}).]   Assume that there exist positive constants $C>0$ and $\beta_i >0 \,(i=1,2)$ such that for any  $ (x,y_1),(x,y_2)\in \mathbb{R}^{m} \times\mathbb{R}^{n}$
	\begin{equation*}
	\begin{aligned}
	2\left\langle y_1-y_2, f_2\left(x, y_1\right)-f_2\left(x, y_2\right)\right\rangle+\left|\sigma_2\left(x, y_1\right)-\sigma_2\left(x, y_2\right)\right|^2 
	&\leq-\beta_1\left|y_1-y_2\right|^2, \\
	2\left\langle y_1, f_2\left(x, y_1\right)\right\rangle+\left|\sigma_2\left(x, y_1\right)\right|^2 & \leq-\beta_2\left|y_1\right|^2+C|x|^2+C
	\end{aligned}
	\end{equation*}
	hold.
\end{itemize}		

Assumption  (\textbf{A3'})  ensures that the solution to the following SDE with frozen ${x}\in\mathbb{R}^{m}$
\[
d\tilde{y}_t = f_{2}({x}, \tilde{y}_t) dt +\sigma_2( {x}, \tilde{y}_t)dW_{t}
\]
has a unique invariant probability measure $\mu_{{x}}$ for every $x$. See \cite[Theorem 6.3.2]{DaPratoZabczyk}. 

From \cite{2023Pei}, it shows that as $\varepsilon \to 0$, the slow component strongly converges to a solution to an  ODE  as follows,  
\begin{equation}\label{4-2}
\left\{\begin{array}{l}
d \bar{x}_t=\bar{f}_1\left(\bar{x}_t\right) d t \\
\bar{x}_0=x_0 \in \mathbb{R}^m,
\end{array}\right.
\end{equation}
where $\bar{f}_1(x)=\int_{\mathbb{R}^{n}} f_1(x, y) d \mu_x(y)$ and $\mu_x$ is a unique invariant probability measure of the fast component with ``frozen"  $x \in \mathbb{R}^m$. It is not too difficult to see that $\bar{f}_1(x)$ also satisfies Lipschitz continuous and linear growth condition. Then, we can obtain that $\{\bar {x}_t\}_{t\in[0,T]}\in C([0,T],\mathbb{R}^m)$ is globally bounded.
\begin{itemize}
	\item[(\textbf{A4'}).]   Assume that $\bar f_1\in C^1$ and there exists positive constant $L>0$ such that for any  $ x_1,x_2\in \mathbb{R}^{m}$
	\begin{equation*}
	\begin{aligned}
	&\left|D\bar f_1\left( x_1\right)-D\bar f_1\left( x_2\right)\right|  \leq L\left|x_1-x_2\right|, 
	\end{aligned}
	\end{equation*}	
	hold.
\end{itemize}	
 {Now we give an example that assure (\textbf{A4'}) hold, in which  (\textbf{A1'})--(\textbf{A3'}) are implicitly assumed.
\begin{exm}
Let $a:=\sigma_{2}\sigma_{2}^{T}$. For $N\ge 1$, denote by $C^N_b$ the space of  bounded $C^N$-functions  whose derivatives up to order $N$ are also bounded. We impose the following assumptions: 
\begin{itemize}
	\item [(\textbf{H1}).]Assume that for every $x\in \mathbb{R}^m$, $f_1(x,\cdot), f_2(x,\cdot), a(x,\cdot)\in C^2_b$.
	\item [(\textbf{H2}).] Assume that the coefficient $a$ is nondegenerate in $y$ uniformly with respect to  $x$, that is, there exists $\hat L>1$ such that for any $x\in \mathbb{R}^m$, 
	$$\hat L^{-1}|\xi|^2\le |a(x,y)\xi|^2\le \hat L|\xi|^2,\quad \xi \in \mathbb{R}^n.$$
	\item [(\textbf{H3}).] Assume that 
	$\lim_{|y| \to  \infty}\sup_{ x}\langle y,f_2(x,y)\rangle  =-\infty$.
	\end{itemize}
If (\textbf{A2'}) and (\textbf{H1})--(\textbf{H3}) hold, thanks to the result \cite[Theorem 2.1]{2021Rockner}, we have that
for every  $F(x,y):=f_1(x,y)-\bar f_1(x)$, there exists a solution $u(x,\cdot)$ satisfying that $u(x,\cdot) \in C^3_b$ for every $x\in \mathbb{R}^m$ and  $u(\cdot,y)\in C_b^1$ for every $y\in \mathbb{R}^n$, to the following Poisson equation in $ \mathbb{R}^n$:
$$\mathcal{L}(x,y)u(x,y)=F(x,y),\quad y\in \mathbb{R}^n$$
	where
	$$\mathcal{L}(x,y): = \sum_{i,j=1}^{n}a^{i,j}(x,y)\frac{\partial^2}{\partial y_i \partial y_j}+\sum_{i}^{n}f_2^{i}(x,y)\frac{\partial}{\partial y_i}.$$
Then, according to \cite[Lemma 3.2]{2021Rockner}, we obtain that  $\bar f_1\in C^2_b$, so 
 (\textbf{A4'}) is satisfied.
\end{exm}}
Then we give two other examples that satisfy (\textbf{A4'}). Moreover, we assume:
\begin{itemize}
	\item 	[($\spadesuit$).]   Assume that $ f_1\in C^1$ and there exists positive constant $L>0$ such that for any  $ x_1,x_2\in \mathbb{R}^{m}$ and $ y\in \mathbb{R}^{n}$
	\begin{equation*}
	\begin{aligned}
	&\left|D_{x} f_1\left( x_1,y\right)-D_{x} f_1\left(x_2,y\right)\right|  \leq L\left|x_1-x_2\right| 
	\end{aligned}
	\end{equation*}	
	hold, where $D_{x} f_1\left( x,y\right)$ is the standard gradient operator with respect to the $x$-variable.
\end{itemize}
\begin{exm}
	Consider the following two-time scale system, where  the fast component is independent of slow component:
	\begin{eqnarray}\label{11}
	\left
	\{
	\begin{array}{ll}
	dx^{\varepsilon}_t = f_{1}(x^{\varepsilon}_t, y^{\varepsilon}_t)dt + \sqrt \varepsilon  \sigma_{1}( x^{\varepsilon}_t)dB^H_{t},\\
	dy^{\varepsilon}_t = \frac{1}{\varepsilon}f_{2}( y^{\varepsilon}_t)dt +\frac{1}{\sqrt \varepsilon} \sigma_2(  y^{\varepsilon}_t)dW_{t}
	\end{array}
	\right.
	\end{eqnarray}
	with the (non-random) initial condition 
	$(x^{\varepsilon}_0, y^{\varepsilon}_0)=(x_0, y_0)\in \mathbb{R}^{m}\times \mathbb{R}^{n}$. Then, (\textbf{A4'}) can be reduced to ($\spadesuit$).
\end{exm}
\begin{exm}
	Consider the following two-time scale system:
	\begin{eqnarray}\label{12}
	\left
	\{
	\begin{array}{ll}
	dx^{\varepsilon}_t = f_{1}(x^{\varepsilon}_t, y^{\varepsilon}_t)dt + \sqrt \varepsilon  \sigma_{1}( x^{\varepsilon}_t)dB^H_{t},\\
	dy^{\varepsilon}_t = \frac{1}{\varepsilon}f_{2}( x^{\varepsilon}_t,y^{\varepsilon}_t)dt +\frac{1}{\sqrt \varepsilon}dW_{t}
	\end{array}
	\right.
	\end{eqnarray}
	with the (non-random) initial condition 
	$(x^{\varepsilon}_0, y^{\varepsilon}_0)=(x_0, y_0)\in \mathbb{R}^{m}\times \mathbb{R}^{n}$. Here, $W =(W_t)_{t \in [0,T]}$ are  $d_2$-dimensional standard Brownian motion (BM) independent of FBM $B^H$. And we set $f_2=-D_{y} U(x,y)$ with $U(x,y)=|y|^2 a(x)$, where $a:\mathbb{R}^m\mapsto[\kappa,\infty)$ for some $\kappa>0$. Moreover, we assume $a$ is a bounded $C^2$-function with bounded first and second derivitives. Then, we can obtain the solution to the following SDE with frozen ${x}\in\mathbb{R}^{m}$
	\[
	d\tilde{y}_t =-2 a(x)\tilde{y}_tdt +dW_{t}
	\]
	has a unique invariant probability measure $\mu_{{x}}(dy)=\frac{{(4a(x))}^{n/2}\exp{[-2a(x)|y|^2]}}{{(2\pi)}^{n/2}}dy=\frac{{(2a(x))}^{n/2}\exp{[-2a(x)|y|^2]}}{{\pi}^{n/2}}dy$ for every $x$. 
	Then, the averaged coefficient is rewritten as following:
	\begin{eqnarray*}
		\bar f_1 (x)= \frac{{(2a(x))}^{n/2}}{{\pi}^{n/2}}\int_{\mathbb{R}^n}f_1(x,y)\exp{[-2a(x)|y|^2]}dy.
	\end{eqnarray*}
	By  the  condition that $a\geq\kappa$, $a$ is a bounded $C^2$-function and   direct computations, we will verify that if ($\spadesuit$) is assumed, then $\bar f_1 (x)$ satisfies (\textbf{A4'}). 
	
	Now we will see the Lipschitz continuity of $\bar f_1$. First, note that $f_1$ and $D_x f_1$ are both bounded. Since $a$ is bounded and $a\geq\kappa$, $x\mapsto {(a(x))}^{n/2}$ is clearly  Lipschitz. So it is sufficient to prove that $k(x)=\int_{\mathbb{R}^n}f_1(x,y)\exp{[-2a(x)|y|^2]}dy$ is Lipschitz continuous in $x$. Note that $k$ is bounded. Then, we have
	\begin{eqnarray}\label{exam2}
	D_xk(x)&=&\int_{\mathbb{R}^n}D_xf_1(x,y)\exp{[-2a(x)|y|^2]}dy\cr
	&&-2\int_{\mathbb{R}^n}f_1(x,y)|y|^2 Da(x)\exp{[-2a(x)|y|^2]}dy.
	\end{eqnarray}
	Here, since $\exp{[-2a(x)|y|^2]}\leq \exp{[-2\kappa|y|^2]}$, the differentiation and integral sign is easily verified. 
	From mean value theorem, we have  for $u,u^\prime\geq 0$, 
 {	\begin{eqnarray*}
		|e^{-u}-e^{-u^\prime}|\leq e^{-(u \wedge u^\prime)}|u-u^\prime|.
	\end{eqnarray*}}
	Then, we have for all $x,x^\prime \in \mathbb{R}^m$,
	\begin{eqnarray}\label{exam3}
	|e^{-2a(x)|y|^2}-e^{-2a(\tilde x)|y|^2}|&\leq& e^{-2|y|^2(a( x) \wedge a(\tilde x))}|a( x)-a(\tilde x)|\cr
	&\leq& e^{-2\kappa|y|^2}|a( x)-a(\tilde x)|\cr
	&\leq& e^{-2\kappa|y|^2}\|a\|_{\infty}| x-\tilde x|.
	\end{eqnarray}
	It is easy to see from \eqref{exam2} and \eqref{exam3} that $D_xk$ is Lipschitz continuous. Here, we also use ($\spadesuit$) and (\textbf{A2'}) as well as the boundness of $a, Da$ and $D^2 a$.
\end{exm}

\bigskip

Next, we define the	deviation component $z^\varepsilon$   as follows:
\begin{eqnarray}\label{4-0}
z_t^{\varepsilon} =\frac{x^\varepsilon_t-\bar x_t}{\sqrt{\varepsilon}h(\varepsilon)}
\end{eqnarray}
for $t\in[0,T]$.
It satisfies the following SDE:
\begin{eqnarray}\label{4-1}
dz^{\varepsilon}_t = \frac{1}{\sqrt{\varepsilon}h(\varepsilon)}\big(f(x^{\varepsilon }_t,y^{\varepsilon }_t)-f(\bar x_t)\big)dt + \frac{1}{h(\varepsilon)}  \sigma( x^{\varepsilon}_t)dB^H_{t},\quad z^\varepsilon_0=0.
\end{eqnarray}
Then there is a measurable map 
\[
\mathcal{G}^{\varepsilon} (\frac{1}{h(\varepsilon)}\bullet,\frac{1}{h(\varepsilon)}\star): C_0\left([0,T], \mathbb{R}^d\right) \rightarrow C^{\alpha}\left([0,T], \mathbb{R}^m\right)
\]
{with $\alpha\in (1-H,1/2)$} such that
$z^{\varepsilon}:=\mathcal{G}^{\varepsilon}\big(\frac{B^H}{h(\varepsilon)},\frac{W}{h(\varepsilon)}\big)$.

The skeleton equation is defined  as follows:
\begin{eqnarray}\label{3}
d\tilde{z}_t = D\bar{f}_1(\bar{x}_t)\tilde{z}_tdt + \sigma_{1}( \bar{x}_t)du_t,\quad \tilde{z}_0=0.
\end{eqnarray}	
Assumption   (\textbf{A4'}) implies that there exists a constant $K$ independent of $t$ and $z$ such that $|D\bar f(\bar x_t)z|\le K|z|$ for any $t\in[0,T]$ and $z$. 
Hence,  according to  \cite[Proposition 3.6]{2020Budhiraja},  there exists a unique solution $\tilde {z} \in  W_0^{\alpha, \infty}([0,T], \mathbb{R}^{m})$  to the skeleton equation (\ref{3}) for any $u \in S_{N}$. Moreover, we  obtain
$$\|\tilde{z}\|_{1-\alpha}\le c_N,$$
where $c_N$ independent of $u\in  S_N$. Because of  the  compact embedding  $C^{1-\alpha}([0,T],\mathbb{R}^{m}) \subset C^{\alpha}([0,T],\mathbb{R}^{m})$ for $1-\alpha>\alpha$, we   could define a map
\[
\mathcal{G}^{0}: \mathcal{H} \rightarrow  C^{\alpha}\left([0,T], \mathbb{R}^m\right)
\]
by  $\tilde{z}=\mathcal{G}^{0}(u, v)$. In other words, this is the solution map of the  skeleton equation \eqref{3}. Here, the solution map $\mathcal{G}^{0}(u, v)$ is independent	of $v$, i.e. $\mathcal{G}^{0}(u, v)=\mathcal{G}^{0}(u, 0)$.

Now, we provide a precise statement of our  second main theorem. Recall that $h(\varepsilon)=\varepsilon^{-\frac{\theta}{2}}$ for $\theta\in(1/2,1)$.
\begin{thm}\label{thm}
	Let $H\in(1/2,1)$  and $\alpha\in (1-H,1/2)$. Assume (\textbf{A1'})--(\textbf{A4'}).
	Then, as $\varepsilon \to 0$,
	the slow component $\{x^{\varepsilon}\}_{\varepsilon\in(0,1]}$ in (\ref{1}) satisfies an MDP with speed $b(\varepsilon)=1/h^2(\varepsilon)$ on 
	$C^{\alpha}\left([0,T], \mathbb{R}^m\right)$ with the good rate function $I: C^{\alpha}\left([0,T], \mathbb{R}^m\right)\rightarrow [0, \infty]$   defined by
	\begin{eqnarray*}\label{rate}
		I(\xi) &=& {
			\inf\Big\{\frac{1}{2}\|u\|^2_{\mathcal{H}^{H,d_1}}~:~{ u\in \mathcal{H}^{H,d_1} 
				\quad\text{such that} \quad\xi =\mathcal{G}^{0}(u, 0)}\Big\} 
		}, \qquad 
		\xi\in C^{\alpha}\left([0,T], \mathbb{R}^m\right).
	\end{eqnarray*}
\end{thm}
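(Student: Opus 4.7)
The plan is to extend the weak convergence / Laplace principle strategy of \thmref{thm1} to the two-time scale setting by combining it with a Khasminskii-type averaging argument. By definition, the MDP for $\{x^\varepsilon\}$ is equivalent to the LDP for $\{z^\varepsilon\}$ on $C^\alpha([0,T],\mathbb{R}^m)$, and by the equivalence of the LDP and the Laplace principle (for good rate functions), it suffices to establish the Laplace upper and lower bounds for $z^\varepsilon$ at speed $b(\varepsilon)=1/h^2(\varepsilon)$, mimicking Step~3 in the proof of \thmref{thm1}.

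First, I set up the controlled system via the variational representation on $(\tilde\Omega,\tilde{\mathcal{F}},\tilde{\mathbb{P}})$: for any $(u^\varepsilon,v^\varepsilon)\in\tilde{\mathcal{A}}_b^N$, replacing $(B^H/h(\varepsilon),W/h(\varepsilon))$ by $(B^H/h(\varepsilon)+u^\varepsilon,W/h(\varepsilon)+v^\varepsilon)$ produces the controlled slow--fast pair $(\tilde x^\varepsilon,\tilde y^\varepsilon)$, with associated controlled deviation $\tilde z^\varepsilon=(\tilde x^\varepsilon-\bar x)/(\sqrt\varepsilon h(\varepsilon))$. Adapting \lemref{lem3-1}--\lemref{lem3-3} and using the dissipativity in (\textbf{A3'}) to control $\tilde y^\varepsilon$, I would prove uniform prior estimates
\begin{equation*}
\mathbb{E}\|\tilde x^\varepsilon\|_{\alpha,\infty}^p+\mathbb{E}\sup_{t\in[0,T]}|\tilde y^\varepsilon_t|^p+\mathbb{E}\|\tilde z^\varepsilon\|_{\alpha,\infty}^p\le C_{p,N}.
\end{equation*}
The extra $h(\varepsilon)/\sqrt\varepsilon$ factor carried by the fast control $v^\varepsilon$ is absorbed in the energy bound $\tfrac12\|v^\varepsilon\|_{\mathcal{H}^{1/2,d_2}}^2\le N$ via the dissipative estimate obtained by It\^o's formula on $|\tilde y^\varepsilon|^2$.

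The core step is to show that whenever $(u^\varepsilon,v^\varepsilon)\in\tilde{\mathcal{A}}_b^N$ converges in distribution to $(u,v)$ in $\tilde S_N$ under the weak topology, $\tilde z^\varepsilon$ converges in distribution to $\tilde z=\mathcal{G}^0(u,v)=\mathcal{G}^0(u,0)$ in $C^\alpha([0,T],\mathbb{R}^m)$. Following Step~2 of \thmref{thm1}, I would introduce the auxiliary linear process
\begin{equation*}
d\hat z^\varepsilon_t=D\bar f_1(\bar x_t)\hat z^\varepsilon_t\,dt+\sigma_1(\bar x_t)\,du^\varepsilon_t,\qquad \hat z^\varepsilon_0=0,
\end{equation*}
for which the deterministic continuity argument of Step~1 of \thmref{thm1}, together with the compact embedding $C^{1-\alpha}\subset C^\alpha$, yields $\hat z^\varepsilon\Rightarrow\tilde z$ in $C^\alpha$. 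It then suffices to prove $\mathbb{E}\|\tilde z^\varepsilon-\hat z^\varepsilon\|_{\alpha,\infty}^2\to0$, after which the Portmanteau-type argument of \eqref{3-82} closes the weak convergence and Step~3 gives the Laplace bounds exactly as before. The fact that $I$ depends only on $u$ is consistent with $\mathcal{G}^0(u,v)=\mathcal{G}^0(u,0)$, since the Girsanov shift by $v^\varepsilon$ only affects the fast component and is washed out in the averaging limit.

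The main obstacle, and the genuinely new ingredient beyond \thmref{thm1}, is bounding in $\alpha$-H\"older seminorm the new drift term
\begin{equation*}
\int_0^{\cdot}\Bigl[\tfrac{1}{\sqrt\varepsilon h(\varepsilon)}\bigl(f_1(\tilde x^\varepsilon_s,\tilde y^\varepsilon_s)-\bar f_1(\bar x_s)\bigr)-D\bar f_1(\bar x_s)\hat z^\varepsilon_s\Bigr]\,ds.
\end{equation*}
Splitting $f_1(\tilde x^\varepsilon,\tilde y^\varepsilon)-\bar f_1(\bar x)=[f_1(\tilde x^\varepsilon,\tilde y^\varepsilon)-\bar f_1(\tilde x^\varepsilon)]+[\bar f_1(\tilde x^\varepsilon)-\bar f_1(\bar x)]$ and applying Taylor's expansion to $\bar f_1$ under (\textbf{A4'}), the second bracket combines with $D\bar f_1(\bar x)\hat z^\varepsilon$ to yield a term controlled by $\tilde z^\varepsilon-\hat z^\varepsilon$ plus an $O(\sqrt\varepsilon h(\varepsilon))$ remainder, handled as in \eqref{3-41}. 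The truly new term is the fluctuation $f_1(\tilde x^\varepsilon,\tilde y^\varepsilon)-\bar f_1(\tilde x^\varepsilon)$, which must integrate to $o(\sqrt\varepsilon h(\varepsilon))$. To handle it I would use the Khasminskii discretization: partition $[0,T]$ into sub-intervals of length $\Delta=\Delta(\varepsilon)$ with $\varepsilon\ll\Delta\ll1$, freeze $\tilde x^\varepsilon$ at the left endpoints, and compare $\tilde y^\varepsilon$ with a frozen-coefficient fast process. The exponential ergodicity of the frozen fast system provided by (\textbf{A3'}) yields a quantitative convergence of its time-averages of $f_1(x,\cdot)$ to $\bar f_1(x)$, as in \cite{2023Pei}. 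Carefully calibrating $\Delta=\Delta(\varepsilon)$ against the MDP speed $1/h^2(\varepsilon)$ (which requires $\theta>1/2$ in $h(\varepsilon)=\varepsilon^{-\theta/2}$), and upgrading the uniform averaging estimate to the $\alpha$-H\"older norm via \eqref{3-39} and the estimate \eqref{Lamada}, will be the crux of the proof.
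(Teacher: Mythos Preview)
Your overall strategy is sound, but your decomposition differs from the paper's and, as written, contains a circularity in the prior estimates.

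The paper does \emph{not} compare $\tilde z^\varepsilon$ directly to the linear process $\hat z^\varepsilon$ you introduce. Instead it inserts an intermediate \emph{stochastic} auxiliary process $\bar x^\varepsilon$ solving the controlled single-scale SDE
\[
d\bar x^\varepsilon_t=\bar f_1(\bar x^\varepsilon_t)\,dt+\sqrt\varepsilon\,\sigma_1(\bar x^\varepsilon_t)\,dB^H_t+\sqrt\varepsilon h(\varepsilon)\,\sigma_1(\bar x^\varepsilon_t)\,du^\varepsilon_t,
\]
and splits $\tilde z^\varepsilon=\dfrac{\tilde x^\varepsilon-\bar x^\varepsilon}{\sqrt\varepsilon h(\varepsilon)}+\dfrac{\bar x^\varepsilon-\bar x}{\sqrt\varepsilon h(\varepsilon)}$. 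The first piece is shown to vanish in probability in $C^\alpha$ purely by Khasminskii averaging (freezing at mesh points, the ergodic estimate \eqref{er}, and a careful decomposition of the $\|\cdot\|_{\alpha,[0,t]}$-seminorm), using only the a priori bounds of \lemref{lem1}--\lemref{lem3} on $(\tilde x^\varepsilon,\tilde y^\varepsilon)$. The second piece is \emph{exactly} the controlled deviation for the single-scale system with $f=\bar f_1$, so Step~2 of \thmref{thm1} applies verbatim. This modularity is the point: averaging and the Taylor linearization are fully decoupled.

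The gap in your plan is the claimed a priori bound $\mathbb{E}\|\tilde z^\varepsilon\|_{\alpha,\infty}^p\le C$. In the single-scale case this follows because $|f(\tilde x^\varepsilon_s)-f(x_s)|/(\sqrt\varepsilon h(\varepsilon))\le L|\tilde z^\varepsilon_s|$, which feeds Gronwall. Here the drift is $[f_1(\tilde x^\varepsilon_s,\tilde y^\varepsilon_s)-\bar f_1(\bar x_s)]/(\sqrt\varepsilon h(\varepsilon))$, and the fluctuation piece $f_1(\tilde x^\varepsilon,\tilde y^\varepsilon)-\bar f_1(\tilde x^\varepsilon)$ has no Lipschitz bound in $\tilde z^\varepsilon$; controlling it already requires the full Khasminskii argument to produce the $O(\varepsilon\Delta^{-1}+\Delta)$ bound. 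So you cannot simply ``adapt \lemref{lem3-1}--\lemref{lem3-3}'' to get this estimate first and do averaging afterwards; the order must be reversed (or, as the paper does, the issue avoided altogether via $\bar x^\varepsilon$). Relatedly, the uniform-in-$\varepsilon$ bound $\mathbb{E}\sup_t|\tilde y^\varepsilon_t|^p\le C$ that you list is not what the paper proves or needs; only the time-integrated bound $\int_0^T\mathbb{E}|\tilde y^\varepsilon_t|^2\,dt\le C$ of \lemref{lem3} is used, and a uniform sup bound is generally not available for the fast component.

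Your route can be made to work if you first run the Khasminskii estimate on $\|\int_0^\cdot[f_1(\tilde x^\varepsilon,\tilde y^\varepsilon)-\bar f_1(\tilde x^\varepsilon)]\,ds\|_{\alpha,[0,t]}$ (which needs only \lemref{lem1}--\lemref{lem3}), then deduce the $\tilde z^\varepsilon$ moment bounds, and only then carry out the Taylor comparison with your $\hat z^\varepsilon$. The paper's insertion of $\bar x^\varepsilon$ is a cleaner device that buys you direct reuse of \thmref{thm1} without this reordering.
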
	
Note that $I(\xi) 
= \inf\Big\{\frac{1}{2}\|(u,v)\|^2_{\mathcal{H}} :{( u,v)\in \mathcal{H}\,\,\text{such that} \,\,\xi =\mathcal{G}^{0}(u, v)}\Big\}
$ for $ 
\xi\in C^{\alpha}\left([0,T], \mathbb{R}^m\right)$.
By definition,  an MDP for $\{x^\varepsilon\}_{\varepsilon\in(0,1]}$ is  equivalent to an LDP for that $\{z^\varepsilon\}_{\varepsilon\in(0,1]}$, so we will prove the latter.

\subsection{Preliminary Lemmas}
In this subsection,  we show some prior estimates of the controlled two-time scale system that will be used in the proof of \thmref{thm}.
We continue to assume  that  $H\in(1/2,1)$ and  $\alpha\in (1-H,1/2)$.

In order to use the variational representation theorem,  we consider the following controlled two-time scale system associated to (\ref{1}) for a pair of control $(u^{\varepsilon}, v^{\varepsilon})\in \tilde{\mathcal{A}}^{b}$:  
\begin{eqnarray}\label{2}
\left
\{
\begin{array}{ll}
d\tilde {x}^{\varepsilon}_t =& f_{1}(\tilde {x}^{\varepsilon}_t, \tilde {y}^{\varepsilon}_t)dt + \sqrt{\varepsilon}h(\varepsilon)\sigma_{1}(\tilde {x}^{\varepsilon}_t)du^{\varepsilon}_t+ \sqrt \varepsilon  \sigma_{1}(\tilde {x}^{\varepsilon}_t)dB^H_{t},\\
d\tilde {y}^{\varepsilon}_t =& \frac{1}{\varepsilon} f_{2}( \tilde {x}^{\varepsilon}_t, \tilde {y}^{\varepsilon}_t)dt +\frac{h(\varepsilon)}{\sqrt{\varepsilon}}\sigma_2(\tilde {x}^{\varepsilon}_t, \tilde {y}^{\varepsilon}_t)dv^{\varepsilon}_t+ \frac{1}{\sqrt{\varepsilon}}\sigma_2(\tilde {x}^{\varepsilon}_t, \tilde {y}^{\varepsilon}_t)dW_{t}
\end{array}
\right.
\end{eqnarray}
with initial value $(\tilde x^{\varepsilon}_0, \tilde y^{\varepsilon}_0)=(x_0, y_0)\in \mathbb{R}^{m}\times \mathbb{R}^{n}$. It is easy to see that there exists a unique solution $(\tilde {x}^{\varepsilon}, \tilde {y}^{\varepsilon})$ to the controlled two-time scale  system (\ref{2}). 

Respectively, the controlled deviation component $\tilde z^\varepsilon$ satisfies the following  SDEs,
\begin{eqnarray}\label{4.3}
d\tilde {z}^{\varepsilon}_t =\frac{1}{\sqrt{\varepsilon}h(\varepsilon)}\big(f(\tilde x^{\varepsilon}_t,\tilde y^{\varepsilon}_t)-\bar f(\bar x_t)\big)dt + \sigma(\tilde {x}^{\varepsilon}_t)du^{\varepsilon}_t+ \frac{1 }{h(\varepsilon)} \sigma(\tilde {x}^{\varepsilon}_t)dB^H_{t},\quad \tilde z^{\varepsilon}_0=0.
\end{eqnarray}
Note that  $\tilde {z}^{\varepsilon}=\mathcal{G}^{\varepsilon} (\frac{B^H}{h(\varepsilon)} +u^{\varepsilon},\frac{W}{h(\varepsilon)} +v^{\varepsilon})$. 

\begin{lem}\label{lem1}
	Assume (\textbf{A1'})--(\textbf{A2'}), let  $p\ge1$ and  $N\in\mathbb{N}$. Then, there exists   $C>0$ such that for every $(u^{\varepsilon},v^{\varepsilon})\in \tilde{\mathcal{A}}_b^N$, we have
	\begin{equation*}
	\mathbb{E}\big[\|\tilde x^{\varepsilon}\|_{\alpha, \infty}^p\big] \leq C.
	\end{equation*}
	Here, $C$ is a positive constant which depends only on $p$ and $N$.
\end{lem}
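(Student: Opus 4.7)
The plan is to follow the argument of \lemref{lem3-1} essentially verbatim. The slow equation in \eqref{2} differs from \eqref{3.2} only in that the drift $f_1(\tilde x^\varepsilon_t, \tilde y^\varepsilon_t)$ now also depends on the fast component $\tilde y^\varepsilon$. The key observation that unlocks the proof is that (\textbf{A2'}) contains the clause $\sup_{y\in\mathbb{R}^n}|f_1(x,y)| \le L(1+|x|)$, so the same linear-growth bound on the drift used in \lemref{lem3-1} carries over without any information about $\tilde y^\varepsilon$. The diffusion coefficient $\sigma_1$ depends only on the slow variable and satisfies exactly the same hypotheses under (\textbf{A1'}) as $\sigma$ did under (\textbf{A1}).

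Concretely, I set $\Lambda := \Lambda_\alpha(B^H) \vee 1$ and, for $\lambda \ge 1$, introduce the weighted seminorms $\|g\|_{\lambda,t}$ and $\|g\|_{1,\lambda,t}$ as in \lemref{lem3-1}. I then reproduce the estimate \eqref{3-2} for $\|\tilde x^\varepsilon\|_{\lambda,t}$, invoking (i) the uniform-in-$y$ linear growth of $f_1$ given by (\textbf{A2'}), (ii) the bound on $D\sigma_1$ from (\textbf{A1'}), (iii) the integral estimate \eqref{Lamada} together with $\Lambda_\alpha(u^\varepsilon) \le C\|u^\varepsilon\|_{\mathcal{H}^{H,d_1}} \le C\sqrt{2N}$, and (iv) the fact that $\sqrt{\varepsilon}h(\varepsilon) \in (0,1]$. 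The analogous estimate for $\|\tilde x^\varepsilon\|_{1,\lambda,t}$ proceeds exactly as in \eqref{3-9}, using the elementary inequalities \eqref{3-3} and \eqref{3-10}.

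Choosing $\lambda = \max\{1, (4C(\Lambda + \|u^\varepsilon\|_{\mathcal{H}^{H,d_1}}))^{1/(1-\alpha)}\}$ as in \eqref{3-12}, the resulting pair of coupled inequalities decouples to yield $\|\tilde x^\varepsilon\|_{\lambda,T} + \|\tilde x^\varepsilon\|_{1,\lambda,T} \le C(\Lambda+\sqrt{2N})^{1/(1-\alpha)}$, and undoing the exponential weight gives
$$\|\tilde x^\varepsilon\|_{\alpha,\infty} \le C(\Lambda+\sqrt{2N})^{1/(1-\alpha)} \exp\bigl(C(\Lambda+\sqrt{2N})^{1/(1-\alpha)}\bigr).$$
Since $1/(1-\alpha) < 2$ for $\alpha \in (1-H, 1/2)$ and $\Lambda_\alpha(B^H)$ has Gaussian tails by Fernique's theorem, raising to the $p$-th power and taking expectation produces the desired bound $C = C(p,N)$.

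The only genuine obstacle is the $y$-dependence of $f_1$: without some a priori control on $\tilde y^\varepsilon$ one could not in general close an estimate for $\tilde x^\varepsilon$ alone. This is precisely what the sup-in-$y$ clause of (\textbf{A2'}) is engineered to circumvent, and it is why that clause was imposed. Consequently no bound on $\tilde y^\varepsilon$ is needed at this stage, and the argument decouples cleanly from the fast dynamics, reducing the situation to a line-by-line repetition of the single-time-scale estimate.
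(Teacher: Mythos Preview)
Your proposal is correct and matches the paper's approach exactly: the paper simply states that under (\textbf{A1'})--(\textbf{A2'}) the lemma is proved in the same way as \lemref{lem3-1}. Your identification of the uniform-in-$y$ linear growth clause in (\textbf{A2'}) as the key ingredient that decouples the slow estimate from the fast component is precisely the reason this reduction works.
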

\para{Proof}. Under  (\textbf{A1'})--(\textbf{A2'}),  the lemma can be proved in the same way as in \lemref{lem3-1}.
\qed
\begin{lem}\label{lem2}
	Assume (\textbf{A1'})--(\textbf{A2'}) and let $N\in\mathbb{N}$. Then, there exists $C>0$ such that, for every $(u^{\varepsilon},v^{\varepsilon})\in \tilde{\mathcal{A}}_b^N$ and every $(t,h)$ with $0 \leq t \leq t+h \leq T$, we have 
	$$\mathbb{E}\big[|\tilde x_{t+h}^{\varepsilon}-\tilde x_{t}^{\varepsilon}|^2\big] \leq C h^{2-2 \alpha}.$$
	Here, $C$ is a positive constant which depends only on $N$.
\end{lem}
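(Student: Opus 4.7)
The plan is to decompose $\tilde x_{t+h}^\varepsilon - \tilde x_t^\varepsilon$ by the controlled equation \eqref{2} and to estimate each resulting integral by the Young-integral bound \eqref{Lamada} combined with \lemref{lem1}. Explicitly,
\begin{equation*}
\tilde x_{t+h}^\varepsilon - \tilde x_t^\varepsilon \;=\; \int_t^{t+h} f_1(\tilde x_s^\varepsilon,\tilde y_s^\varepsilon)\,ds + \sqrt{\varepsilon}\,h(\varepsilon)\int_t^{t+h}\sigma_1(\tilde x_s^\varepsilon)\,du_s^\varepsilon + \sqrt{\varepsilon}\int_t^{t+h}\sigma_1(\tilde x_s^\varepsilon)\,dB_s^H,
\end{equation*}
and it suffices to bound the square of each of the three terms in expectation by $Ch^{2-2\alpha}$.

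For the drift term, the uniform-in-$y$ linear growth built into (\textbf{A2'}), namely $\sup_{y}|f_1(x,y)|\le L(1+|x|)$, is essential because no moment bound on $\tilde y^\varepsilon$ is yet available. Cauchy--Schwarz and \lemref{lem1} then give $\mathbb{E}|\int_t^{t+h} f_1(\tilde x_s^\varepsilon,\tilde y_s^\varepsilon)\,ds|^2 \le C h^2 \le C h^{2-2\alpha}$ via the trade $h^{2\alpha}\le T^{2\alpha}$.

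For the fractional Brownian integral, I would apply \eqref{Lamada} with $g=\sigma_1(\tilde x^\varepsilon)$ on the subinterval $[t,t+h]$. The first summand there is bounded by $\|\sigma_1(\tilde x^\varepsilon)\|_{\infty}\,h^{1-\alpha}/(1-\alpha)$, where $\|\sigma_1(\tilde x^\varepsilon)\|_\infty$ is controlled via (\textbf{A1'}) and \lemref{lem1}; the iterated-integral summand is dominated by $L\int_t^{t+h}\|\tilde x^\varepsilon\|_{\alpha,[0,r]}\,dr \le Lh\,\|\tilde x^\varepsilon\|_{\alpha,\infty}$ by the Lipschitz bound on $\sigma_1$ together with the very definition of the $W_0^{\alpha,\infty}$-norm. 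After the trade $h\le T^\alpha h^{1-\alpha}$,
\begin{equation*}
\Bigl|\int_t^{t+h}\sigma_1(\tilde x_s^\varepsilon)\,dB_s^H\Bigr| \;\le\; C\,\Lambda_\alpha(B^H)\bigl(1+\|\tilde x^\varepsilon\|_{\alpha,\infty}\bigr)\,h^{1-\alpha}.
\end{equation*}
Squaring, invoking Cauchy--Schwarz, the Fernique theorem for $\Lambda_\alpha(B^H)$, and \lemref{lem1}, the prefactor $\sqrt\varepsilon\le 1$ being harmless, delivers $Ch^{2-2\alpha}$ for this piece. The $u^\varepsilon$-integral is handled identically, replacing $\Lambda_\alpha(B^H)$ by the deterministic bound $\Lambda_\alpha(u^\varepsilon)\le C\sqrt{2N}$ available on $\tilde{\mathcal{A}}_b^N$ and using $\sqrt{\varepsilon}h(\varepsilon)\le 1$.

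The main obstacle is more bookkeeping than substantive: one must verify that only quantities with controlled moments (namely $\|\tilde x^\varepsilon\|_{\alpha,\infty}$ and $\Lambda_\alpha(B^H)$) enter the final estimate, and that the factor $h$ coming from the Lipschitz piece of the Young bound is exchanged for $h^{1-\alpha}$ via $h\le T^\alpha h^{1-\alpha}$. Summing the three estimates yields the claimed bound $\mathbb{E}[|\tilde x_{t+h}^\varepsilon-\tilde x_t^\varepsilon|^2]\le Ch^{2-2\alpha}$.
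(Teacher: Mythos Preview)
Your proof is correct and follows essentially the same approach that the paper invokes: the paper's own proof is a one-line reference to \cite[Lemma~4.2]{2023Inahama}, noting only that $\varepsilon,\sqrt{\varepsilon}h(\varepsilon)\in(0,1]$, and your argument is exactly the kind of increment estimate that reference carries out---decompose the increment via \eqref{2}, handle the drift through the uniform-in-$y$ linear growth in (\textbf{A2'}) and \lemref{lem1}, and control the two Young integrals via \eqref{Lamada}, the Lipschitz bound on $\sigma_1$, the $W_0^{\alpha,\infty}$-norm of $\tilde x^\varepsilon$, and Fernique (resp.\ the deterministic bound $\Lambda_\alpha(u^\varepsilon)\le C\sqrt{2N}$).
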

\para{Proof}. Using the  condition that $\varepsilon,\sqrt{\varepsilon}h(\varepsilon) \in(0,1]$,  we can prove the desired inequality in the same way as in the \cite[Lemma 4.2]{2023Inahama}. \qed

\begin{lem}\label{lem3}
 {Let $N\in\mathbb{N}$.	Assume (\textbf{A1'})--(\textbf{A3'}) and  that there exists $\varepsilon_0\in(0,1]$ such that for all
		$\varepsilon\in (0,\varepsilon_0]$, $\sqrt{\varepsilon}h(\varepsilon)<\frac{\beta_2}{2}$}. Then, there exists $C>0$ such that for every $(u^{\varepsilon},v^{\varepsilon})\in \mathcal{A}_b^N$, we have
	\begin{eqnarray}\label{4.1}
	\int_{0}^{T} \mathbb{E}\big[|\tilde  y_t^{\varepsilon}|^2\big] dt\leq C.
	\end{eqnarray}
	Here, $C$ is a positive constant which depends only  on $N$.
\end{lem}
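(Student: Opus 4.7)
The plan is to apply It\^o's formula to $|\tilde y_t^\varepsilon|^2$, then to exploit the second dissipativity inequality in (\textbf{A3'}) to produce a strong decay term $-\tfrac{\beta_2}{\varepsilon}|\tilde y_t^\varepsilon|^2$, and finally to absorb the term coming from the control $v^\varepsilon$ into that decay term via Young's inequality. The technical smallness assumption $\sqrt{\varepsilon}h(\varepsilon)<\beta_2/2$ is precisely what makes such absorption possible, so it will be used in a crucial way.

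More concretely, I would first localize by the stopping times $\tau_R:=\inf\{t\in[0,T]:|\tilde y_t^\varepsilon|\ge R\}\wedge T$ so that the Brownian stochastic integral is a genuine martingale vanishing in expectation. Applying It\^o's formula to $|\tilde y^\varepsilon_{t\wedge\tau_R}|^2$, the finite variation part of the dynamics contributes
\[
\frac{2}{\varepsilon}\langle \tilde y^\varepsilon_s, f_2(\tilde x^\varepsilon_s,\tilde y^\varepsilon_s)\rangle+\frac{1}{\varepsilon}|\sigma_2(\tilde x^\varepsilon_s,\tilde y^\varepsilon_s)|^2+\frac{2h(\varepsilon)}{\sqrt{\varepsilon}}\langle \tilde y^\varepsilon_s,\sigma_2(\tilde x^\varepsilon_s,\tilde y^\varepsilon_s)(v^\varepsilon)'_s\rangle.
\]
The first two terms are bounded above by $-\tfrac{\beta_2}{\varepsilon}|\tilde y^\varepsilon_s|^2+\tfrac{C}{\varepsilon}(|\tilde x^\varepsilon_s|^2+1)$ by (\textbf{A3'}). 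For the control cross term, I would apply the elementary inequality $2ab\le \tfrac{\beta_2}{2}a^2+\tfrac{2}{\beta_2}b^2$ with $a=|\tilde y^\varepsilon_s|/\sqrt{\varepsilon}$ and $b=h(\varepsilon)|\sigma_2(\tilde x^\varepsilon_s,\tilde y^\varepsilon_s)||(v^\varepsilon)'_s|$, giving
\[
\frac{2h(\varepsilon)}{\sqrt{\varepsilon}}|\langle \tilde y^\varepsilon_s,\sigma_2 (v^\varepsilon)'_s\rangle|\le \frac{\beta_2}{2\varepsilon}|\tilde y^\varepsilon_s|^2+\frac{2h^2(\varepsilon)}{\beta_2}|\sigma_2(\tilde x^\varepsilon_s,\tilde y^\varepsilon_s)|^2|(v^\varepsilon)'_s|^2.
\]
This leaves an effective dissipative coefficient of $-\tfrac{\beta_2}{2\varepsilon}$.

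Taking expectation, letting $R\to\infty$ by Fatou on the left and by monotone/dominated convergence (using that the dominating expressions are independent of $R$) on the right, and rearranging for the integral of $\mathbb{E}|\tilde y^\varepsilon_s|^2$ over $[0,T]$, I obtain
\[
\frac{\beta_2}{2\varepsilon}\int_0^T\mathbb{E}|\tilde y^\varepsilon_s|^2\,ds\le |y_0|^2+\frac{C}{\varepsilon}\int_0^T(\mathbb{E}|\tilde x^\varepsilon_s|^2+1)\,ds+\frac{2h^2(\varepsilon)}{\beta_2}\,\mathbb{E}\!\int_0^T|\sigma_2(\tilde x^\varepsilon_s,\tilde y^\varepsilon_s)|^2|(v^\varepsilon)'_s|^2\,ds.
\]
Multiplying by $2\varepsilon/\beta_2$, using $\varepsilon\le 1$, the supremum estimate $|\sigma_2(x,y)|\le L(1+|x|)$ from (\textbf{A2'}), the control budget $\int_0^T|(v^\varepsilon)'_s|^2\,ds\le 2N$, and \lemref{lem1} to control $\mathbb{E}\|\tilde x^\varepsilon\|_\infty^2$, the right-hand side becomes $C(|y_0|^2+1)+C\varepsilon h^2(\varepsilon)N$. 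The assumption $\sqrt{\varepsilon}h(\varepsilon)<\beta_2/2$ forces $\varepsilon h^2(\varepsilon)<\beta_2^2/4$, so the last summand is also a constant depending only on $N$ (and $\beta_2,T,|y_0|$), yielding \eqref{4.1}.

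The main obstacle is controlling the $v^\varepsilon$ cross term: a priori the prefactor $h(\varepsilon)/\sqrt{\varepsilon}$ blows up as $\varepsilon\to 0$, so a naive Young split would spoil the decay. The key is that the $|\tilde y^\varepsilon|^2$ piece it produces has coefficient $\tfrac{1}{\varepsilon}$ times a quantity controlled by $\sqrt{\varepsilon}h(\varepsilon)$, which is precisely why the smallness hypothesis on $\sqrt{\varepsilon}h(\varepsilon)$ enters and why only half of the $\beta_2$ dissipation may be used for absorption; the remaining half delivers the time-integrated bound.
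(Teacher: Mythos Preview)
Your argument is correct and follows the same overall strategy as the paper: apply It\^o's formula to $|\tilde y^\varepsilon_t|^2$, use the second inequality in (\textbf{A3'}) for the drift/quadratic-variation terms, handle the $v^\varepsilon$ cross term by Young's inequality, and invoke \lemref{lem1} for the $\tilde x^\varepsilon$ contributions.

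The execution differs in two noteworthy ways. First, the paper splits the control term as
\[
\frac{2h(\varepsilon)}{\sqrt{\varepsilon}}\langle \tilde y^\varepsilon_s,\sigma_2(v^\varepsilon)'_s\rangle\le \frac{h(\varepsilon)}{\sqrt{\varepsilon}}|\tilde y^\varepsilon_s|^2+\frac{h(\varepsilon)}{\sqrt{\varepsilon}}|\sigma_2|^2|(v^\varepsilon)'_s|^2,
\]
so the $|\tilde y^\varepsilon|^2$ coefficient is $h(\varepsilon)/\sqrt\varepsilon$, and the hypothesis $\sqrt\varepsilon h(\varepsilon)<\beta_2/2$ is \emph{needed} to absorb it into $-\beta_2/\varepsilon$. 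Your split places the weight $\beta_2/(2\varepsilon)$ on $|\tilde y^\varepsilon|^2$ directly, so the absorption is automatic; the smallness condition is only used at the end to bound $\varepsilon h^2(\varepsilon)$, and in fact the standing assumption $\sqrt\varepsilon h(\varepsilon)\le 1$ would already suffice there. Second, the paper passes through a differential inequality for $t\mapsto\mathbb{E}|\tilde y^\varepsilon_t|^2$, solves it via the comparison theorem to obtain an exponential-kernel bound, and then integrates over $[0,T]$ with Fubini; you instead drop the nonnegative boundary term $\mathbb{E}|\tilde y^\varepsilon_T|^2$ and rearrange once, which avoids the comparison/Gronwall step entirely. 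Both routes are valid; yours is slightly more elementary, while the paper's yields the extra pointwise-in-$t$ information $\sup_t\mathbb{E}|\tilde y^\varepsilon_t|^2\le C$ as a by-product.
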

\para{Proof}. According to   It\^o's formula, we have
\begin{eqnarray}\label{4.1'}
{| \tilde {y}_{t}^{\varepsilon} |^2} &=& {| y_0 |^2} + \frac{2}{\varepsilon }\int_0^t {\langle \tilde {y}_{s}^{\varepsilon},f_2( \tilde {x}_{s}^{\varepsilon},\tilde {y}_{s}^{\varepsilon} )\rangle ds} 
+ \frac{2h(\varepsilon)}{{\sqrt { \varepsilon } }}\int_0^t {\langle \tilde {y}_{s}^{\varepsilon},\sigma_{2} ( {\tilde {x}_{s}^{\varepsilon},\tilde {y}_{s}^{\varepsilon}} ){\frac{dv_s^{\varepsilon}}{ds}  }\rangle ds}   \cr
&&+\frac{2}{\sqrt{\varepsilon} }\int_0^t {\langle \tilde {y}_{s}^{\varepsilon},\sigma_{2}( {\tilde {x}_{s}^{\varepsilon},\tilde {y}_{s}^{\varepsilon}} ) dW_s\rangle}+ \frac{1}{\varepsilon }\int_0^t |{\sigma_{2} }( {\tilde {x}_{s}^{\varepsilon},\tilde {y}_{s}^{\varepsilon}} )|^2ds.
\end{eqnarray}
In the same way as in  \cite[Lemma 4.3]{2023Inahama}, we  obtain that 
for every $(u^{\varepsilon},v^{\varepsilon})\in \mathcal{A}_b^N$, 
$\sup_{0\le s\le t}|\tilde  y_s^{\varepsilon}|$ has moments of all orders. 
Then,  by using \lemref{lem1}, we have that the fourth term in right hand side of (\ref{4.1'}) is a true martingale. In particular,   $\mathbb{E}[\int_0^t {\langle \tilde {y}_{s}^{\varepsilon},\sigma_{2}( {\tilde {x}_{s}^{\varepsilon},\tilde {y}_{s}^{\varepsilon}} ) dW_s\rangle}]=0$.
Taking expectation of \eqref{4.1'}, we obtain
\begin{eqnarray}\label{3.21}
\frac{d\mathbb{E}[{| \tilde {y}_{t}^{\varepsilon} |^2}]}{dt} &=& \frac{2}{\varepsilon }\mathbb{E} \big[{\langle \tilde {y}_{t}^{\varepsilon},f_2( \tilde {x}_{t}^{\varepsilon},\tilde {y}_{t}^{\varepsilon} )\rangle } \big]
+ \frac{2h(\varepsilon)}{{\sqrt { \varepsilon } }}\mathbb{E} \big[{\langle \tilde {y}_{t}^{\varepsilon},\sigma_{2} ( {\tilde {x}_{t}^{\varepsilon},\tilde {y}_{t}^{\varepsilon}} ){\frac{dv_t^{\varepsilon}}{dt}  }\rangle }\big]  \cr 
&&+ \frac{1}{\varepsilon }\mathbb{E} \big[|{\sigma_{2} }( {\tilde {x}_{t}^{\varepsilon},\tilde {y}_{t}^{\varepsilon}} )|^2\big].
\end{eqnarray} 
By  (\textbf{A2'}) and  (\textbf{A3'}), we have
\begin{eqnarray}\label{lemma3.12}
\frac{2}{\varepsilon }{\langle \tilde {y}_{t}^{\varepsilon},f_2( \tilde {x}_{t}^{\varepsilon},\tilde {y}_{t}^{\varepsilon} )\rangle } +\frac{1}{\varepsilon } |{\sigma_{2} }( {\tilde {x}_{t}^{\varepsilon},\tilde {y}_{t}^{\varepsilon}} )|^2\le- \frac{{ \beta_2 }}{\varepsilon }{{| \tilde {y}_{t}^{\varepsilon} |^2}}  + {\frac{C}{\varepsilon}|\tilde x^\varepsilon_t|^2}+ \frac{{C}}{\varepsilon }	
\end{eqnarray}
and
\begin{eqnarray}\label{3.22}
\begin{aligned}
\frac{2h(\varepsilon)}{{\sqrt { \varepsilon } }}{\langle \tilde {y}_{t}^{\varepsilon},\sigma_{2} ( {\tilde {x}_{t}^{\varepsilon},\tilde {y}_{t}^{\varepsilon}} ){\frac{dv_t^{\varepsilon}}{dt}  }\rangle }\le    {\frac{{2L^2h(\varepsilon)}}{{\sqrt { \varepsilon } }}}\big( 1 + | \tilde {x}_{t}^{\varepsilon} |^2 \big)| {\frac{dv_t^{\varepsilon}}{dt}  } |^2+   \frac{h(\varepsilon)}{{\sqrt { \varepsilon } }}| \tilde {y}_{t}^{\varepsilon} |^2 .
\end{aligned}
\end{eqnarray}
Thus, from (\ref{3.21})--(\ref{3.22}), we obtain
\begin{eqnarray*}\label{3.23}
	\begin{aligned}
		\frac{d\mathbb{E}[{| \tilde {y}_{t}^{\varepsilon} |^2}]}{dt} &\le  ({\frac{{ - \beta_2 }}{\varepsilon } }+\frac{{{h(\varepsilon) }}}{{\sqrt { \varepsilon } }}) \mathbb{E}[{| \tilde {y}_{t}^{\varepsilon} |^2}] + {\frac{C}{\varepsilon}|\tilde x^\varepsilon_t|^2} +    {\frac{{2L^2h(\varepsilon)}}{{\sqrt { \varepsilon } }}} \mathbb{E}[{{| \tilde {x}_{t}^{\varepsilon} |^2{| {\frac{dv_t^{\varepsilon}}{dt}  } |^2}}}]  +    {\frac{{2L^2h(\varepsilon)}}{{\sqrt { \varepsilon } }}} \mathbb{E}[{| {\frac{dv_t^{\varepsilon}}{dt}  } |^2}] + \frac{{ C}}{\varepsilon }\\
		&\le  {\frac{{ - \beta_2 }}{2\varepsilon } } \mathbb{E}[{| \tilde {y}_{t}^{\varepsilon} |^2}]  +    {\frac{{2L^2h(\varepsilon)}}{{\sqrt { \varepsilon } }}}  \mathbb{E}[{{| \tilde {x}_{t}^{\varepsilon} |^2{| {\frac{dv_t^{\varepsilon}}{dt}  } |^2}}}]  +    {\frac{{2L^2h(\varepsilon)}}{{\sqrt { \varepsilon } }}} \mathbb{E}[{| {\frac{dv_t^{\varepsilon}}{dt}  } |^2}] + \frac{{ C}}{\varepsilon }.
	\end{aligned}
\end{eqnarray*}
Moreover,  by using the comparison theorem for all $t$, we have  that
\begin{eqnarray}\label{3.231}
\begin{aligned}
\mathbb{E}[{| \tilde {y}_{t}^{\varepsilon} |^2}] &\le {|y_0|^2 }+   {\frac{C}{\varepsilon}\int_{0}^{t}  e^{-\frac{\beta_2}{2\varepsilon} (t-s)}|\tilde x^\varepsilon_s|^2ds}+  {\frac{{2L^2h(\varepsilon)}}{{\sqrt { \varepsilon } }}}\int_{0}^{t}  e^{-\frac{\beta_2}{2\varepsilon} (t-s)}{\mathbb{E}[{| \tilde {x}_{s}^{\varepsilon} |^2{| {\frac{dv_s^{\varepsilon}}{ds}  } |^2}}]} ds  \\
&\qquad+    {\frac{{2L^2h(\varepsilon)}}{{\sqrt { \varepsilon } }}}\int_{0}^{t}  e^{-\frac{\beta_2}{2\varepsilon} (t-s)} { \mathbb{E}[|{\frac{dv_s^{\varepsilon}}{ds}  } |^2]}ds + \frac{{ C}}{\varepsilon }\int_{0}^{t}  e^{-\frac{\beta_2}{2\varepsilon} (t-s)}ds.
\end{aligned}
\end{eqnarray}
Then, by integrating  both sides of (\ref{3.231}) and using Fubini's theorem, we  obtain
\begin{eqnarray*}\label{3.232}
	\begin{aligned}
		\int_{0}^{T}\mathbb{E}[{| \tilde {y}_{t}^{\varepsilon} |^2}]dt 
		&\le  {|y_0|^2 T}+ {\frac{C}{\varepsilon}\int_{0}^{T}\int_{0}^{t}  e^{-\frac{\beta_2}{2\varepsilon} (t-s)}|\tilde x^\varepsilon_s|^2ds}  +  {\frac{{2L^2h(\varepsilon)}}{{\sqrt { \varepsilon } }}}\int_{0}^{T}\int_{0}^{t}  e^{-\frac{\beta_2}{2\varepsilon} (t-s)}{\mathbb{E}[{| \tilde {x}_{s}^{\varepsilon} |^2{| {\frac{dv_s^{\varepsilon}}{ds}  } |^2}}]} dsdt  \\
		&\quad+   {\frac{{2L^2h(\varepsilon)}}{{\sqrt { \varepsilon } }}}\int_{0}^{T}\int_{0}^{t}  e^{-\frac{\beta_2}{2\varepsilon} (t-s)} { \mathbb{E}[|{\frac{dv_s^{\varepsilon}}{ds}  } |^2]}ds + \frac{{ C}}{\varepsilon }\int_{0}^{t}  e^{-\frac{\beta_2}{2\varepsilon} (t-s)}dsdt\\
		&\le  {|y_0|^2 T }+  {\frac{2CT}{\beta_2}\mathbb{E}\big[\sup_{0 \leq t \leq T}| \tilde {x}_{t}^{\varepsilon} |^2\big]}+   {\frac{{2L^2h(\varepsilon)}}{{\sqrt { \varepsilon } }}}\mathbb{E}\big[\sup_{0 \leq t \leq T}| \tilde {x}_{t}^{\varepsilon} |^2\int_{0}^{T}\int_{s}^{T}  e^{-\frac{\beta_2}{2\varepsilon} (t-s)} dt| {\frac{dv_s^{\varepsilon}}{ds}  } |^2ds\big]  \\
		&\quad+    {\frac{{2L^2h(\varepsilon)}}{{\sqrt { \varepsilon } }}}\int_{0}^{T}\int_{s}^{T}  e^{-\frac{\beta_2}{2\varepsilon} (t-s)}dt { \mathbb{E}[|{\frac{dv_s^{\varepsilon}}{ds}  } |^2]}ds + \frac{{ C}}{\varepsilon }\int_{0}^{t}  e^{-\frac{\beta_2}{2\varepsilon} (t-s)}ds\\
		&\le  {|y_0|^2 T}+ {\frac{2CT}{\beta_2}\mathbb{E}\big[\sup_{0 \leq t \leq T}| \tilde {x}_{t}^{\varepsilon} |^2\big]+   \frac{4L^2\sqrt { \varepsilon }  h(\varepsilon) }{\beta_2}\mathbb{E}\big[\sup_{0 \leq t \leq T}| \tilde {x}_{t}^{\varepsilon} |^2\int_{0}^{T}   | {\frac{dv_s^{\varepsilon}}{ds}  } |^2ds\big]}\cr
		&\quad  {  +   \frac{4L^2\sqrt { \varepsilon }  h(\varepsilon)}{\beta_2}\int_{0}^{T} { \mathbb{E}[|{\frac{dv_s^{\varepsilon}}{ds}  } |^2]}ds + C}.
	\end{aligned}
\end{eqnarray*}
By the fact that $(u^{\varepsilon}, v^{\varepsilon})\in \tilde{\mathcal{A}}_{b}^N$ and $\sqrt{\varepsilon}h(\varepsilon)\in(0,1]$, we have
\begin{eqnarray*}\label{3.24}
	\int_{0}^{T}\mathbb{E}[{| \tilde {y}_{t}^{\varepsilon} |^2}]dt &\le& {C}\mathbb{E}[\mathop {\sup }\limits_{t \in \left[ {0,T} \right]} {| \tilde {x}_{t}^{\varepsilon} |^2}]+C.
\end{eqnarray*}
Then, by using \lemref{lem1},  we obtain (\ref{4.1}). 
\qed

\subsection{Proof of Main Theorem(\thmref{thm})}\label{sec.5}
We will prove the LDP for $\{z^\varepsilon\}_{\varepsilon\in(0,1]}$ with speed $b(\varepsilon)=1/h^2(\varepsilon)$ on the  H\"older path space 	$C^{\alpha}\left([0,T], \mathbb{R}^m\right)$ with the good rate function $I$.

\textbf{Step 1}.  Everything in this step is deterministic.
Let $(u^{(j)}, v^{(j)})$ and $(u, v)$ belong to $S_N$ such that $(u^{(j)}, v^{(j)})\rightarrow(u, v)$ as $j\rightarrow\infty$ in the weak topology of $\mathcal{H}$. 
In this step,  we will  show 
\begin{eqnarray} \label{3.26}
\mathcal{G}^{0}( u^{(j)} ,v^{(j)} )\rightarrow\mathcal{G}^{0}(u,v) \label{step2}
\end{eqnarray}
in $C^{\alpha}([0,T],\mathbb{R}^m)$ as $j \to \infty$.

Due to  the  {continuous embedding}  
$\mathcal{H}^{H,d_1}\subset C^{1-\alpha}([0,T], \mathbb{R}^{d_1})$, 
we  obtain  that $\{u^{(j)}\}_{j\ge 1} \subset C^{1-\alpha}([0,T], \mathbb{R}^{d_1})$. Let $\{\tilde {z}^{(j)}_t\}_{j\ge 1}$ be a family of solutions to the skeleton equation (\ref{3}), that is,
\begin{eqnarray*} \label{3.25}
	d\tilde {z}^{(j)}_t =  D\bar{f}_1(  \bar{x}_t)\tilde {z}^{(j)}_tdt + \sigma_{1}( \bar{x}_t)du^{(j)}_t,\quad \tilde {z}^{(j)}_0=0.
\end{eqnarray*}	
Due to  Assumption (\textbf{A1'}), (\textbf{A4'}) and  \cite[Proposition 3.6]{2020Budhiraja}, there exists a unique solution $\tilde {z} \in  W_0^{\alpha, \infty}([0,T], \mathbb{R}^{m})$  to the skeleton equation (\ref{3}). Furthermore, there exists a positive constant $C:=C_N$ such that
\begin{equation*} \label{3.28}
\sup_{ j\geq 1}\|\tilde z^{(j)}\|_{1-\alpha} \leq C. 
\end{equation*}	

By Assumption (\textbf{A4'}) and taking same way as in Step 1 of \thmref{thm1},  {it is not too difficult to verify that there exists a subsequence of $\{\tilde z^{(j)}\}_{ j\ge 1}$  converges to the limit point $\tilde z$ in $C^{\alpha}([0,T], \mathbb{R}^{m})$ which  satisfies the following ODEs}:
\begin{eqnarray} \label{3.44}
d\tilde {z}_t =  D\bar{f}_1(  \bar{x}_t)\tilde z_tdt + \sigma_{1}( \bar{x}_t)du_t, \quad \tilde {z}_0=0.
\end{eqnarray}

\textbf{Step 2}. 
Let $N\in\mathbb{N}$, $\varepsilon\in(0,1]$ and $\Delta\in(0,T]$.
We will let $\varepsilon \to 0$ later.  In this step, the constant $C>0$ is independent of $\varepsilon,\Delta$ which may change from line to line.

Assume that $(u^{\varepsilon}, v^{\varepsilon})\in \tilde{\mathcal{A}}^{N}_b$  converges in distribution to $(u, v)$ as $\varepsilon \to 0$. We rewrite the controlled deviation component  in  (\ref{4.3}) as follows:
\[
\tilde {z}^{\varepsilon}:=\mathcal{G}^{\varepsilon}\bigg(\frac{B^H}{h(\varepsilon)}+u^{\varepsilon} , \frac{W}{h(\varepsilon)} +v^{\varepsilon}\bigg).
\]
We will prove that   $\tilde {z}^{\varepsilon}$  converges in distribution to $\tilde {z}$ in $ C^{\alpha}([0,T],\mathbb{R}^m)$ as $\varepsilon\rightarrow 0$, that is,
\begin{eqnarray} \label{step3}
\mathcal{G}^{\varepsilon}\bigg(\frac{B^H}{h(\varepsilon)}+u^{\varepsilon} , \frac{W}{h(\varepsilon)} +v^{\varepsilon}\bigg)\rightarrow\mathcal{G}^{0}(u , v) \quad\text{(in distribution)}.
\end{eqnarray}	
To do so, we first define the auxiliary process  
\begin{eqnarray*}
	d\bar {x}^{\varepsilon}_t =\bar f_{1}(\bar {x}^{\varepsilon}_{t})dt +  \sqrt{\varepsilon}\sigma_{1}(\bar {x}^{\varepsilon}_t)dB^{H}_t+ \sqrt{\varepsilon}h(\varepsilon)\sigma_{1}(\bar {x}^{\varepsilon}_t)du^{\varepsilon}_t,\quad \bar {x}^{\varepsilon}_0=x_0
\end{eqnarray*}
and   divide $\tilde z^\varepsilon=\hat z^\varepsilon+\bar z^\varepsilon$, where we set
 {\begin{eqnarray}\label{3.78}
	\hat  {z}^{\varepsilon}_t =\frac{\tilde{x}^{\varepsilon}_t-\bar {x}^{\varepsilon}_t}{\sqrt{\varepsilon}h(\varepsilon)},\qquad \bar  {z}^{\varepsilon}_t =\frac{\bar{x}^{\varepsilon}_t-\bar {x}_t}{\sqrt{\varepsilon}h(\varepsilon)}.
\end{eqnarray}}
Note that $\hat  {z}^{\varepsilon}_0=0$ and $\bar  {z}^{\varepsilon}_0=0$. In order to show \eqref{step3}, it is enough to  verify the following two statements \textbf{(a)} and \textbf{(b)}.
\begin{itemize}
	\item[\textbf{(a)}.] For any $\delta>0$,
	\begin{eqnarray}
	\lim_{\varepsilon \to  0}\mathbb{P}\big(\|\hat z^\varepsilon\|_{\alpha\textrm{-hld}}>\delta\big)=0.
	\end{eqnarray}
	\item[\textbf{(b)}.]As $\varepsilon \to 0$,
	\begin{eqnarray}\label{a}
	\bar z^\varepsilon \rightarrow\tilde z \quad (\text{in} \,\, C^{\alpha}([0,T],\mathbb{R}^m) \,\text{in distribution}).
	\end{eqnarray}
\end{itemize}

Firstly, we  verify  Statement \textbf{(a)}. It is equivalent to show that for any $\delta>0$,
\begin{eqnarray}\label{b}
\lim_{\varepsilon \to  0}\mathbb{P}\big(\|\tilde x^\varepsilon_t-\bar x^\varepsilon_t\|_{\alpha\textrm{-hld}}>\delta \sqrt{\varepsilon}h(\varepsilon)\big)=0
\end{eqnarray}
holds.
Now,	  we construct the auxiliary processes as follows:
\begin{eqnarray*}
	\left
	\{
	\begin{array}{ll}
		d\hat {x}^{\varepsilon}_t =& f_{1}(\tilde {x}^{\varepsilon}_{t(\Delta)}, \hat {y}^{\varepsilon}_t)dt + \sqrt{\varepsilon}h(\varepsilon)\sigma_{1}(\hat {x}^{\varepsilon}_t)du^{\varepsilon}_t+ \sqrt{\varepsilon}\sigma_{1}(\hat {x}^{\varepsilon}_t)dB^{H}_t,\\
		d\hat {y}^{\varepsilon}_t =&  \frac{1}{\varepsilon}f_{2}( \tilde {x}^{\varepsilon}_{t(\Delta)}, \hat {y}^{\varepsilon}_t)dt + \frac{1}{\sqrt \varepsilon}\sigma_2(\tilde {x}^{\varepsilon}_{t(\Delta)}, \hat {y}^{\varepsilon}_t)dW_{t}
	\end{array}
	\right.
\end{eqnarray*}
with the same initial condition as (\ref{2}), where $t(\Delta)=\left\lfloor\frac{t}{\Delta}\right\rfloor \Delta$ is the nearest breakpoint preceding $t$. 			
By essentially the same arguments as in  \lemref{lem1} and \lemref{lem3}, we have for every $p\geq1$,
\begin{eqnarray}\label{3-36}
\mathbb{E}\big[\|\hat x^{\varepsilon}\|_{\alpha, \infty}^p\big] \leq C
\end{eqnarray}
and 
\begin{eqnarray*}\label{3.37}
	\int_{0}^{T} \mathbb{E}\big[|\hat y_t^{\varepsilon}|^2\big] dt\leq C,
\end{eqnarray*}	
where the constant $C$ only depends on $p$ and $ N$.

Then, we will prove that  for any $\delta>0$,
\begin{eqnarray*}\label{3.38}
	\lim_{\varepsilon \to 0}\mathbb{P}\big(\frac{1}{\sqrt{\varepsilon}h(\varepsilon)}\|\tilde x^{\varepsilon}-\hat{x}^{\varepsilon}\|_{\alpha,\infty}\geq\delta\big)=0.
\end{eqnarray*}
Take $R>0$  large enough and define the stopping time $\tau_R:=\inf \left\{t \in[0,T]:\|B^H\|_{1-\alpha, \infty, t} \geq R\right\}$. For $\lambda>0$, we will estimate 
\begin{eqnarray*}\label{b3.39}
	\mathbf{J}:=\mathbb{E}\left[\sup _{t \in[0,T\wedge \tau_R]} e^{-\lambda t}\|\tilde x^{\varepsilon}-\hat{x}^{\varepsilon}\|_{\alpha,[0,t]}^2\right].
\end{eqnarray*}
It is easy to see that
\begin{eqnarray*}\label{3.40}
	\mathbf{J} &\leq & C \mathbb{E}\left[\sup _{t \in[0,T\wedge \tau_R]}e^{-\lambda t}\left\|\int_0^\cdot\big(f_1( \tilde x_s^{\varepsilon}, \tilde y_s^{\varepsilon})-f_1( \tilde x_{s(\Delta)}^{\varepsilon}, \hat{y}_s^{\varepsilon})\big) d s\right\|_{\alpha,[0,t]}^2 \right] \cr
	&&+C \mathbb{E}\left[\sup _{t \in[0,T\wedge \tau_R]}e^{-\lambda t}\left\|\sqrt{\varepsilon}h(\varepsilon)\int_0^\cdot\big(\sigma_1( \tilde x_s^{\varepsilon})-\sigma_1( \hat x_{s}^{\varepsilon})\big) d u_s^{\varepsilon}\right\|_{\alpha,[0,t]}^2 \right] \cr
	&&+C \mathbb{E}\left[\sup _{t \in[0,T\wedge \tau_R]}e^{-\lambda t}\left\|\sqrt{\varepsilon}\int_0^\cdot(\sigma_1( \tilde x_s^{\varepsilon})-\sigma_1( \hat x_s^{\varepsilon})) d B_s^H\right\|_{\alpha,[0,t]}^2 \right] \cr
	&=:& J_1+J_2+J_3.
\end{eqnarray*}

We will estimate $J_1$. By (\textbf{A2'}) and (\ref{3-39}), we have
\begin{eqnarray}\label{3.41}
J_1 &\leq & C \mathbb{E}\left[\sup _{t \in[0,T\wedge \tau_R]}\left\|\int_0^\cdot\big(f_1( \tilde x_s^{\varepsilon}, \tilde y_s^{\varepsilon})-f_1( \tilde x_{s(\Delta)}^{\varepsilon}, \tilde {y}_s^{\varepsilon})\big) d s\right\|_{\alpha,[0,t]}^2 \right] \cr
&&+C \mathbb{E}\left[\sup _{t \in[0,T\wedge \tau_R]}\left\|\int_0^\cdot\big(f_1( \tilde x_{s(\Delta)}^{\varepsilon}, \tilde {y}_s^{\varepsilon})-f_1( \tilde x_{s(\Delta)}^{\varepsilon}, \hat{y}_s^{\varepsilon})\big) d s\right\|_{\alpha,[0,t]}^2 \right] \cr
&\leq & C\Big[\sup _{t \in[0, T]}\int_{0}^{t} (t-s)^{-2\alpha}ds\Big] \Big\{\int_0^T\mathbb{E}\left[\big|f_1( \tilde x_s^{\varepsilon}, \tilde y_s^{\varepsilon})-f_1( \tilde x_{s(\Delta)}^{\varepsilon}, \tilde {y}_s^{\varepsilon})\big|^2 \right]d s \Big.\cr
&&\qquad\qquad\qquad\qquad\qquad\qquad\quad+\Big. \int_0^T\mathbb{E}\left[\big|f_1( \tilde x_{s(\Delta)}^{\varepsilon}, \tilde {y}_s^{\varepsilon})-f_1( \tilde x_{s(\Delta)}^{\varepsilon}, \hat{y}_s^{\varepsilon})\big|^2\right]d s\Big\}\cr
&\leq & C \int_0^T\mathbb{E}\left[\big|\tilde x_s^{\varepsilon}-\tilde x_{s(\Delta)}^{\varepsilon}\big|^2+\big|\tilde y_s^{\varepsilon}-\hat y_s^{\varepsilon}\big|^2\right]d s \cr
&\leq &C \Delta.
\end{eqnarray}
Here, the third inequality comes from the condition   $2\alpha<1$, while the final inequality comes from  \lemref{lem2}  and the estimate
\begin{eqnarray*}\label{3.42}
	\int_{0}^{T}\mathbb{E}\big[|\tilde y_{t}^{\varepsilon}-\hat y_{t}^{\varepsilon}|^2\big]dt \leq C \Delta,
\end{eqnarray*}
which was already shown in \cite[Lemma 4.4]{2020Large}.

Next we will estimate $J_2$. By using \cite[Lemma 7.1]{2002Rascanu}
and   (\ref{3-43}),  we have
\begin{eqnarray}\label{3.45}
J_2 &\le &C \varepsilon h^2(\varepsilon)\mathbb{E}\left[\sup _{t \in[0,T\wedge \tau_R]}\left|\int_0^t e^{-\lambda t}\left[(t-r)^{-2 \alpha}+r^{-\alpha}\right]\big\|\sigma_1( \tilde {x}^{\varepsilon})-\sigma_1( \hat{x}^{\varepsilon})\big\|_{\alpha,[0,r]} d r\right|^2\right] \cr
&\leq & C \varepsilon h^2(\varepsilon)\mathbb{E}\left[\sup _{t \in[0,T\wedge \tau_R]} \left| \int_0^t e^{-\lambda t}\left[(t-r)^{-2 \alpha}+r^{-\alpha}\right]\right.\right.\cr
&&\left.\times\left.\big(1+\Delta(\tilde{x}_r^{\varepsilon})+\Delta(\hat{x}_r^{\varepsilon})\big)\big\|\tilde {x}^{\varepsilon}-\hat{x}^{\varepsilon}\big\|_{\alpha,[0,r]}dr \right|^2\right],
\end{eqnarray}
where $\Delta(\tilde{x}_r^{\varepsilon})=\int_0^r \frac{|\tilde{x}_r^{\varepsilon}-\tilde{x}_q^{\varepsilon}|}{(r-q)^{1+\alpha}} d q$ and $\Delta(\hat{x}_r^{\varepsilon})=\int_0^r \frac{|\hat{x}_r^{\varepsilon}-\hat{x}_q^{\varepsilon}|}{(r-q)^{1+\alpha}} d q$, which are dominated by  a constant multiple of $\|\tilde x^{\varepsilon}\|_{\alpha, \infty}\le C$ and a constant multiple of $\|\hat x^{\varepsilon}\|_{\alpha, \infty}\le C$, where $C:=C(R,N,T)>0$ is independent of $\varepsilon$ and $\Delta$. 
Then,  we can easily see from  (\ref{3-3}) and (\ref{3-10}) that
\begin{eqnarray}\label{3.47}
J_2 &\le &C \varepsilon h^2(\varepsilon)\lambda^{2 \alpha-1} \mathbb{E}\big[\sup _{t \in[0,T\wedge \tau_R]} e^{-\lambda t}\|\tilde{x}^{\varepsilon}-\hat{x}^{\varepsilon}\|_{\alpha,[0,t]}^2 \big].
\end{eqnarray}

For the third term $J_3$, by invoking the same argument   as in \eqref{3.45},  we have
\begin{eqnarray}\label{3.48}
J_3 &\le &C \varepsilon \lambda^{2 \alpha-1} \mathbb{E}\big[\sup _{t \in[0,T\wedge \tau_R]} e^{-\lambda t}\|\tilde{x}^{\varepsilon}-\hat{x}^{\varepsilon}\|_{\alpha,[0,t]}^2 \big]
\end{eqnarray}
for some constant $C:=C(R,N,T)>0$ independent of $\varepsilon$ and $\Delta$.

Combining  (\ref{3.41})--(\ref{3.48}) and the condition that $\varepsilon,\varepsilon h^2(\varepsilon)\in(0,1]$,  {for large enough $\lambda$}, we obtain
\begin{eqnarray*}\label{3.49}
	\mathbb{E}\left[\sup _{t \in[0,T\wedge \tau_R]} e^{-\lambda t}\|\tilde x^{\varepsilon}-\hat{x}^{\varepsilon}\|_{\alpha,[0,t]}^2 \right]\le C\Delta ,
\end{eqnarray*}
which immediately implies that
\begin{eqnarray}\label{3.52}
	\mathbb{E}\left[\sup _{t \in[0,T\wedge \tau_R]} \|\tilde x^{\varepsilon}-\hat{x}^{\varepsilon}\|_{\alpha,[0,t]}^2 \right]\le C\Delta ,
\end{eqnarray}
where $C:=C(R,N,T)>0$ is independent of $\varepsilon$ and $\Delta$.

Next, we  will show the following inequality
\begin{eqnarray*}
\mathbb{E}\big[\sup _{t \in[0,T\wedge \tau_R]}\|\hat x^{\varepsilon}-\bar{x}^{\varepsilon}\|_{\alpha,[0,t]}^2\big] \leq C\left(\varepsilon \Delta^{-1}+\Delta\right),
\end{eqnarray*}
where $C:=C(R,N,T)>0$ is independent of $\varepsilon$ and $\Delta$. By straightforward computation, we have
{\begin{eqnarray*}\label{3.53}
		&&\mathbb{E}\big[\sup _{t \in[0,T\wedge \tau_R]}e^{-\lambda t}\|\hat x^{\varepsilon}-\bar{x}^{\varepsilon}\|_{\alpha,[0,t]}^2\big]\cr  &\leq & C \mathbb{E}\left[\sup _{t \in[0, T\wedge \tau_R]}e^{-\lambda t}\bigg\|\int_0^\cdot\big(f_1( \tilde x_{s(\Delta)}^{\varepsilon}, \hat{y}_s^{\varepsilon})-\bar f_1( \tilde x_{s(\Delta)}^{\varepsilon})\big) d s\bigg\|_{\alpha,[0,t]}^2 \right]\cr
		&&+C \mathbb{E}\left[\sup _{t \in[0,T\wedge \tau_R]}e^{-\lambda t}\bigg\|\int_0^\cdot\big(\bar f_1( \tilde x_{s(\Delta)}^{\varepsilon})-\bar f_1(  {\bar x_{s}^{\varepsilon}})\big) d s\bigg\|_{\alpha,[0,t]}^2 \right]\cr
		&&+C \mathbb{E}\left[\sup _{t \in[0,T\wedge \tau_R]}e^{-\lambda t}\bigg\|\int_0^\cdot\sqrt{\varepsilon} h(\varepsilon)\big(\sigma_1( \hat x_s^{\varepsilon})-\sigma_1( \bar x_{s}^{\varepsilon})\big) d u_s^\varepsilon\bigg\|_{\alpha,[0,t]}^2 \right] \cr
		&&+C \mathbb{E}\left[\sup _{t \in[0,T\wedge \tau_R]}e^{-\lambda t}\bigg\|\int_0^\cdot\sqrt{\varepsilon}\big(\sigma_1( \hat x_s^{\varepsilon})-\sigma_1( \bar x_{s}^{\varepsilon})\big) d B_s^H\bigg\|_{\alpha,[0,t]}^2 \right] \cr
		&=:& I_1+I_2+I_3+I_4.
\end{eqnarray*}}

Then, we compute the first term $I_1$,
\begin{eqnarray}\label{3.54}
I_1 &\leq & C \mathbb{E}\bigg[\sup _{t \in[0, T]}\bigg|\sum_{k=0}^{\left\lfloor\frac{t}{\Delta}\right\rfloor-1} \int_{k \Delta}^{(k+1) \Delta}\big(f_1(\tilde x_{k \Delta}^{\varepsilon}, \hat{y}_s^{\varepsilon})-\bar{f}_1( \tilde x_{k \Delta}^{\varepsilon})\big) d s\bigg|^2\bigg] \cr
&&+C \mathbb{E}\bigg[\sup _{t \in[0, T]}\bigg|\int_{\left\lfloor\frac{t}{\Delta}\right\rfloor \Delta}^t\big(f_1( \tilde x_{s(\Delta)}^{\varepsilon}, \hat{y}_s^{\varepsilon})-\bar{f}_1( \tilde x_{s(\Delta)}^{\varepsilon})\big) d s\bigg|^2\bigg] \cr
&&+C \mathbb{E}\bigg[\sup _{t \in[0, T]}\bigg(\int_0^t \frac{\left|\int_s^t\big(f_1( \tilde x_{r(\Delta)}^{\varepsilon}, \hat{y}_r^{\varepsilon})-\bar{f}_1( \tilde x_{r(\Delta)}^{\varepsilon}\big)) d r\right|}{(t-s)^{1+\alpha}} d s\bigg)^2\bigg] =:  \sum_{i=1}^3 {I}_1^i .
\end{eqnarray}
For the first two  terms $\sum_{i=1}^2 {I}_1^i$, we  obtain
\begin{eqnarray}\label{3.55}
\sum_{i=1}^2 {I}_1^i &\leq & C \mathbb{E}\big[\sup _{t \in[0, T]}\left\lfloor\frac{t}{\Delta}\right\rfloor \sum_{k=0}^{\left\lfloor\frac{t}{\Delta}\right\rfloor-1}\big|\int_{k \Delta}^{(k+1) \Delta}\big(f_1(\tilde x_{k \Delta}^{\varepsilon}, \hat{y}_s^{\varepsilon})-\bar{f}_1( \tilde x_{k \Delta}^{\varepsilon})\big) d s\big|^2\big] +C\Delta^2 \cr
&\leq & \frac{C}{\Delta^2} \max _{0 \leq k \leq\left\lfloor\frac{T}{\Delta}\right\rfloor-1} \mathbb{E}\big[\big|\int_{k \Delta}^{(k+1) \Delta}\big(f_1( \tilde x_{k \Delta}^{\varepsilon},\hat{y}_s^{\varepsilon})-\bar{f}_1( \tilde x_{k \Delta}^{\varepsilon})\big) d s\big|^2\big] +C \Delta^2 \cr
&\leq & C \frac{\varepsilon^2}{\Delta^2} \max _{0 \leq k \leq\left\lfloor\frac{T}{\Delta}\right\rfloor-1} \int_0^{\frac{\Delta}{\varepsilon}} \int_\zeta^{\frac{\Delta}{\varepsilon}} \mathcal{J}_k(s, \zeta) d s d \zeta+C \Delta^2,
\end{eqnarray}
where $0 \leq \zeta \leq s \leq \frac{\Delta}{\varepsilon}$ and
\begin{eqnarray}\label{3.56}
\mathcal{J}_k(s, \zeta)&=&\mathbb{E}\big[\big\langle f_1( \tilde x_{k \Delta}^{\varepsilon}, \hat{y}_{s \varepsilon+k \Delta}^{\varepsilon})-\bar{f}_1( \tilde x_{k \Delta}^{\varepsilon}),f_1( \tilde x_{k \Delta}^{\varepsilon}, \hat{y}_{\zeta \varepsilon+k \Delta}^{\varepsilon})-\bar{f}_1( \tilde x_{k \Delta}^{\varepsilon})\big\rangle\big].
\end{eqnarray}
It is known that \begin{eqnarray}\label{er}
\mathcal{J}_k(s, \zeta) \leq C e^{-\frac{\beta_1}{2}(s-\zeta)},
\end{eqnarray}
where $\beta_1$ is in (\textbf{A3'}). For a proof, see \cite[Appendix B]{2023Pei} for example.

To compute $I_1^3$, we first set $\ell:=\left\{t-s<2 \Delta\right\}$ and $\ell^c:=\left\{t-s \geq2 \Delta\right\}$.  By the condition that $2\alpha<1$, we have
\begin{eqnarray}\label{3.57}
I_1^3 
&\leq & C \mathbb{E}\big[\sup _{t \in[0, T]}\int_0^t{(t-s)^{-\frac{1}{2}-\alpha}}ds\times\sup _{t \in[0, T]}\int_0^t \frac{\big|\int_s^t\big(f_1( \tilde x_{r(\Delta)}^{\varepsilon}, \hat{y}_r^{\varepsilon})-\bar{f}_1( \tilde x_{r(\Delta)}^{\varepsilon}\big)) d r\big|^2}{(t-s)^{\frac{3}{2}+\alpha}} d s\big] \cr
&\leq & C \mathbb{E} \big[\sup _{t \in[0, T]}\int_0^t \frac{\big|\int_s^t\big(f_1( \tilde x_{r(\Delta)}^{\varepsilon}, \hat{y}_r^{\varepsilon})-\bar{f}_1( \tilde x_{r(\Delta)}^{\varepsilon}\big)) d r\big|^2}{(t-s)^{\frac{3}{2}+\alpha}}\mathbf{1}_{\ell^c} d s\big] \cr
&&+ C  \mathbb{E}\big[\sup _{t \in[0, T]}\int_0^t \frac{\big|\int_s^t\big(f_1( \tilde x_{r(\Delta)}^{\varepsilon}, \hat{y}_r^{\varepsilon})-\bar{f}_1( \tilde x_{r(\Delta)}^{\varepsilon}\big)) d r\big|^2}{(t-s)^{\frac{3}{2}+\alpha}} \mathbf{1}_\ell d s\big] \cr
&=:& {I}_1^{31}+ {I}_1^{32}.
\end{eqnarray}
Then, by (\textbf{A2'}), we estimate the first term ${I}_1^{31}$  as follows:
{\begin{eqnarray}\label{3.58}
{I}_1^{31} 
&\leq &C \mathbb{E} \bigg[\sup _{t \in[0, T]}\int_0^t \frac{\big|\int_s^t\big(f_1( \tilde x_{r(\Delta)}^{\varepsilon}, \hat{y}_r^{\varepsilon})-\bar{f}_1( \tilde x_{r(\Delta)}^{\varepsilon}\big)) d r\big|^2}{(t-s)^{\frac{3}{2}+\alpha}}\mathbf{1}_{\ell^c} d s\bigg] \cr
&\le&C \mathbb{E}\bigg[\sup _{t \in[0, T]} \int_0^t \frac{\left|\int_s^{\left\lfloor\frac{s}{\Delta}\right\rfloor \Delta+1}\big(f_1( \tilde x_{r(\Delta)}^{\varepsilon}, \hat{y}_r^{\varepsilon})-\bar{f}_1( \tilde x_{r(\Delta)}^{\varepsilon})\big) d r\right|^2}{(t-s)^{\frac{3}{2}+\alpha}} \mathbf{1}_{\ell^c} d s\bigg] \cr
&&+C \mathbb{E}\bigg[\sup _{t \in[0, T]} \int_0^t \frac{\left|\int_{\left\lfloor\frac{t}{\Delta}\right\rfloor \Delta}^t\big(f_1( \tilde x_{r(\Delta)}^{\varepsilon}, \hat{y}_r^{\varepsilon})-\bar{f}_1( \tilde x_{r(\Delta)}^{\varepsilon})\big) d r\right|^2}{(t-s)^{\frac{3}{2}+\alpha}} \mathbf{1}_{\ell^c} d s\bigg] \cr
&&+C \mathbb{E}\bigg[\sup _{t \in[0, T]} \int_0^t \frac{\left(\left\lfloor\frac{t}{\Delta}\right\rfloor-\left\lfloor\frac{s}{\Delta}\right\rfloor-1\right)}{(t-s)^{\frac{3}{2}+\alpha}}\bigg.\cr
&&\bigg.\qquad\qquad\qquad\times \sum_{k=\left\lfloor\frac{s}{\Delta}\right\rfloor+1}^{\left\lfloor\frac{t}{\Delta}\right\rfloor-1}\bigg|\int_{k \Delta}^{(k+1) \Delta}\big(f_1(\tilde x_{k \Delta}^{\varepsilon}, \hat{y}_r^{\varepsilon})-\bar{f}_1( \tilde x_{k \Delta}^{\varepsilon})\big) d r\bigg|^2 \mathbf{1}_{\ell^c} d s\bigg] \cr
&\leq& C \sup _{t \in[0, T]}\bigg(\int_0^t(t-s)^{-\frac{1}{2}-\alpha}((\left\lfloor\frac{s}{\Delta}\right\rfloor+1) \Delta-s) d s\bigg)\cr&&+C \sup _{t \in[0, T]}\bigg(\int_0^t(t-s)^{-\frac{1}{2}-\alpha}(t-\left\lfloor\frac{t}{\Delta}\right\rfloor \Delta)  d s\bigg) \cr
&&+C \Delta^{-1} \mathbb{E}\bigg[\sup _{t \in[0, T]} \int_0^t(t-s)^{-\frac{1}{2}-\alpha}\bigg.\cr
&&\bigg.\qquad\qquad\qquad\times \sum_{k=\left\lfloor\frac{s}{\Delta}\right\rfloor+1}^{\left\lfloor\frac{t}{\Delta}\right\rfloor-1}\big|\int_{k \Delta}^{(k+1) \Delta}\big(f_1( \tilde x_{k \Delta}^{\varepsilon}, \hat{y}_r^{\varepsilon})-\bar{f}_1( \tilde x_{k \Delta}^{\varepsilon})\big) d r\big|^2 \mathbf{1}_{\ell^c} d s\bigg] \cr
&\leq& C \Delta+C \frac{\varepsilon^2}{\Delta^2} \max _{0 \leq k \leq\left\lfloor\frac{T}{\Delta}\right\rfloor-1} \int_0^{\frac{\Delta}{\varepsilon}} \int_\zeta^{\frac{\Delta}{\varepsilon}} \mathcal{J}_k(s, \zeta) d s d \zeta.
\end{eqnarray}}
For the last inequality, we used the Schwarz inequality and a similar computation to \eqref{3.55}.

 {By  Assumption (\textbf{A2'}), \lemref{lem1} and the fact that $t-s< 2 \Delta$, we have}
\begin{eqnarray}\label{3.59}
{I}_1^{32}
&\leq & C \mathbb{E}\left[\sup _{t \in[\Delta, T]} \int_0^{t(\Delta)-\Delta} \frac{\big|\int_s^t\big(f_1(\tilde x_{r(\Delta)}^{\varepsilon}, \hat{y}_r^{\varepsilon})-\bar{f}_1(\tilde x_{r(\Delta)}^{\varepsilon})\big) d r\big|^2}{(t-s)^{\frac{3}{2}+\alpha}} \mathbf{1}_{ \ell} d s\right] \cr
&&+C \mathbb{E}\left[\sup _{t \in[\Delta, T]} \int_{t(\Delta)-\Delta}^t \frac{\big|\int_s^t\big(f_1(\tilde x_{r(\Delta)}^{\varepsilon}, \hat{y}_r^{\varepsilon})-\bar{f}_1(\tilde x_{r(\Delta)}^{\varepsilon})\big) d r\big|^2}{(t-s)^{\frac{3}{2}+\alpha}}\mathbf{1}_{ \ell} d s\right] \cr
&&+C\mathbb{E}\left[\sup _{t \in[0, \Delta]} \int_0^t \frac{\left|\int_s^t\big(f_1(\tilde x_{r(\Delta)}^{\varepsilon}, \hat{y}_r^{\varepsilon})-\bar{f}_1(\tilde x_{r(\Delta)}^{\varepsilon})\big) d r\right|^2}{(t-s)^{\frac{3}{2}+\alpha}} \mathbf{1}_{\ell} d s\right] \cr
&\leq & C \Delta \sup _{t \in[\Delta, T]}\big(\int_0^{t(\Delta)-\Delta}(t-s)^{-\frac{1}{2}-\alpha}  ds\big) +C \sup _{t \in[\Delta, T]}\big(\int_{t(\Delta)-\Delta}^t(t-s)^{\frac{1}{2}-\alpha} \mathbf{1}_{ \ell} ds\big) \cr
&&+C \sup _{t \in[0, \Delta]}\big(\int_0^t(t-s)^{\frac{1}{2}-\alpha} \mathbf{1}_{ \ell}  ds\big)\cr
&\leq & C \Delta.
\end{eqnarray}
Combining    (\ref{3.55})--(\ref{3.59}) and using \eqref{er}, we obtain
\begin{eqnarray}\label{3.60}
I_1
&\leq & C \Delta+C \frac{\varepsilon^2}{\Delta^2} \max _{0 \leq k \leq\left\lfloor\frac{T}{\Delta}\right\rfloor-1} \int_0^{\frac{\Delta}{\varepsilon}} \int_\zeta^{\frac{\Delta}{\varepsilon}} \mathcal{J}_k(s, \zeta) d s d \zeta\cr
& \leq& C \frac{\varepsilon^2}{\Delta^2} \max _{0 \leq k \leq\left\lfloor\frac{T}{\Delta}\right\rfloor-1} \int_0^{\frac{\Delta}{\varepsilon}} \int_\zeta^{\frac{\Delta}{\varepsilon}} e^{-\frac{\beta_1}{2}(s-\zeta)} d s d \zeta+C \Delta \cr
& \leq &C \frac{\varepsilon^2}{\Delta^2}\left(\frac{2}{\beta_1} \frac{\Delta}{\varepsilon}-\frac{4}{\beta_1^2}+e^{\frac{-\beta_1}{2} \frac{\Delta}{\varepsilon}}\right)+C\Delta \cr
& \leq& C\left(\varepsilon \Delta^{-1}+\Delta\right).
\end{eqnarray}

Afterwards, it goes on to estimate $I_2$,
{\begin{eqnarray}\label{3.61}
I_2
&\leq &  C \mathbb{E}\bigg[\sup _{t \in[0,T\wedge \tau_R]} e^{-\lambda t}\big\|\int_0^\cdot\big(\bar{f}_1( \tilde {x}_{s(\Delta)}^{\varepsilon})-\bar{f}_1( \tilde {x}_s^{\varepsilon})\big) d s\big\|_{\alpha,[0,t]}^2 \bigg] \cr
&&+C \mathbb{E}\bigg[\sup _{t \in[0,T\wedge \tau_R]} e^{-\lambda t}\big\|\int_0^\cdot\big(\bar{f}_1(\tilde {x}_s^{\varepsilon})-\bar{f}_1(\hat {x}_s^{\varepsilon})\big) d s\big\|_{\alpha,[0,t]}^2 \bigg] \cr
&&+C \mathbb{E}\bigg[\sup _{t \in[0,T\wedge \tau_R]} e^{-\lambda t}\big\|\int_0^\cdot\big(\bar{f}_1( \hat {x}_s^{\varepsilon})-\bar{f}_1( \bar {x}_s^{\varepsilon})\big) d s\big\|_{\alpha,[0,t]}^2 \bigg]\cr&=:& I_2^1+I_2^2+I_2^3.
\end{eqnarray}}
By taking similar estimates as in (\ref{3.41}) and using   (\textbf{A2'}) and  \lemref{lem1}, we have
\begin{eqnarray}\label{3.62}
\sum_{i=1}^{2} I_2^i 
&\leq & C \int_0^{T\wedge \tau_R}\mathbb{E}\left[\big|\bar{f}_1( \tilde x_s^{\varepsilon})-\bar{f}_1( \tilde x_{s(\Delta)}^{\varepsilon})\big|^2 \right]d s +C  {\int_0^T\mathbb{E}\left[\big|\bar{f}_1( \tilde x_{s}^{\varepsilon})-\bar{f}_1( \hat x_{s}^{\varepsilon})\big|^2\right]d s} \cr
&\leq & C \int_0^{T\wedge \tau_R}\mathbb{E}\left[\big|\tilde x_s^{\varepsilon}-\tilde x_{s(\Delta)}^{\varepsilon}\big|^2+\big|\hat x_s^{\varepsilon}-\tilde x_{s}^{\varepsilon}\big|^2\right]d s \cr
&\leq &C \Delta.
\end{eqnarray}
for some constants $C:=C(R,N,T)>0$ independent of $\varepsilon,\Delta$.
By the Lipschitz continuity of averaged coefficient $\bar f_1$, we obtain
\begin{eqnarray}\label{3.63}
I_2^3 
&\leq  & C \mathbb{E}\left[\sup _{t \in[0, T\wedge \tau_R]} e^{-\lambda t} \int_0^t(t-s)^{-2 \alpha} \big| \bar{f}_1( \hat{x}_s^{\varepsilon})-\bar{f}_1( \bar{x}_s^{\varepsilon})\big|^2  d s\right] \cr
& \leq& C\mathbb{E}\left[\sup _{t \in[0, T\wedge \tau_R]} \int_0^t e^{-\lambda(t-s)}(t-s)^{-2 \alpha} e^{-\lambda s}\big|\hat{x}_s^{\varepsilon}-\bar{x}_s^{\varepsilon}\big|^2  d s\right] \cr
& \leq& C\mathbb{E}\left[\sup _{t \in[0, T\wedge \tau_R]} e^{-\lambda t}\big\|\hat{x}^{\varepsilon}-\bar{x}^{\varepsilon}\big\|_{\alpha,[0,t]}^2 \right] \sup _{t \in[0, T]} \int_0^t e^{-\lambda(t-r)}(t-r)^{-2 \alpha} d r \cr
& \leq& C \lambda^{2 \alpha-1} \mathbb{E}\big[\sup_{t \in[0, T\wedge \tau_R]}e^{-\lambda t} \big\|\hat{x}^{\varepsilon}-\bar{x}^{\varepsilon}\big\|_{\alpha,[0,t]}^2 \big].
\end{eqnarray}
 {The $I_2$ is estimated by summarizing \eqref{3.61}--\eqref{3.63},
\begin{eqnarray}\label{3.69}
I_2 \le C\Delta+C \lambda^{2 \alpha-1} \mathbb{E}\big[\sup_{t \in[0, T\wedge \tau_R]}e^{-\lambda t} \big\|\hat{x}^{\varepsilon}-\bar{x}^{\varepsilon}\big\|_{\alpha,[0,t]}^2 \big].
\end{eqnarray}}
For the term $I_3$, by taking similar estimates as in   (\ref{3.45}), we have
\begin{eqnarray*}\label{3.64}
	I_3 
	&\leq & C \varepsilon h^2(\varepsilon)\mathbb{E}\left[\sup _{t \in[0, T\wedge \tau_R]}\left|\int_0^t e^{-\lambda t}\left[(t-r)^{-2 \alpha}+r^{-\alpha}\right]\left\|\sigma_1(  \hat {x}^{\varepsilon})-\sigma_1(  \bar{x}^{\varepsilon})\right\|_{\alpha,[0,s]} d r\right|^2\right] \cr
	&\leq & C\varepsilon h^2(\varepsilon)\mathbb{E}\bigg[\sup _{t \in[0, T\wedge \tau_R]} \mid \int_0^t e^{-\lambda t}\big[(t-r)^{-2 \alpha}+r^{-\alpha}\big]\bigg.\cr
	&&\bigg.\times\left.\left(1+\Delta(  \hat {x}_r^{\varepsilon})+\Delta( \bar {x}_r^{\varepsilon})\right)\| \hat {x}^{\varepsilon}- \bar {x}^{\varepsilon}\|_{\alpha,[0,r]}  d r\right|^2\bigg].
\end{eqnarray*}
Here,  the two terms $\Delta(\hat{x}_r^{\varepsilon})=\int_0^r \frac{|\hat{x}_r^{\varepsilon}-\hat{x}_q^{\varepsilon}|}{(r-q)^{1+\alpha}} d q$ and $\Delta(\bar{x}_r^{\varepsilon})=\int_0^r \frac{|\bar{x}_r^{\varepsilon}-\bar{x}_q^{\varepsilon}|}{(r-q)^{1+\alpha}} d q$  can be dominated by a constant multiple of $\|\hat x^{\varepsilon}\|_{\alpha, \infty}\le C$ and $\|\bar x^{\varepsilon}\|_{\alpha, \infty}\le C$, where   $C>0$ is independent of $\varepsilon$ and $\Delta$.
Hence, 
\begin{eqnarray}\label{3.66}
I_3 
& \leq& C \varepsilon h^2(\varepsilon)\lambda^{2 \alpha-1} \mathbb{E}\big[\sup_{t \in[0, T\wedge \tau_R]} \big\|\hat{x}_t^{\varepsilon}-\bar{x}_t^{\varepsilon}\big\|_{\alpha,[0,t]}^2 \big].
\end{eqnarray}
For the last term $I_4$, by take similar estimates as in (\ref{3.66}), we obtain
\begin{eqnarray}\label{3.68}
I_4 
\leq C \varepsilon \lambda^{2 \alpha-1} \mathbb{E}\big[\sup_{t \in[0, T\wedge \tau_R]} \big\|\hat{x}_s^{\varepsilon}-\bar{x}_s^{\varepsilon}\big\|_{\alpha,[0,t]}^2 \big].
\end{eqnarray}

Combining  (\ref{3.60})--(\ref{3.68}) and the condition that $\varepsilon,\varepsilon h^2(\varepsilon)\in(0,1]$, we obtain
\begin{eqnarray*}\label{3.67}
	\mathbb{E}\big[\sup _{t \in[0,T\wedge \tau_R]}\|\hat x^{\varepsilon}-\bar{x}^{\varepsilon}\|_{\alpha,[0,t]}^2\big] 
	& \leq& C \lambda^{2 \alpha-1} \mathbb{E}\big[\sup_{t \in[0, T\wedge \tau_R]} \big\|\hat{x}^{\varepsilon}-\bar{x}^{\varepsilon}\big\|_{\alpha,[0,t]}^2 \big]\cr
	&&+C\left(\varepsilon \Delta^{-1}+\Delta\right),
\end{eqnarray*}
where $C:=C(R,N,T)>0$ is independent of $\varepsilon$ and $\Delta$.
Taking $\lambda$ large enough such that $C  \lambda^{2 \alpha-1}<1$, we have
\begin{eqnarray}\label{3.80}
\mathbb{E}\big[\sup _{t \in[0,T\wedge \tau_R]}\|\hat x^{\varepsilon}-\bar{x}^{\varepsilon}\|_{\alpha,[0,t]}^2\big] 
\le C\left(\varepsilon \Delta^{-1}+\Delta\right)
\end{eqnarray}
for some constants $C:=C(R,N,T)>0$ independent of $\varepsilon$ and $\Delta$.

Then, by combining with estimate (\ref{3.52}) and (\ref{3.80}), we obtain 
\begin{eqnarray}\label{3.83}
\mathbb{E}\big[\sup _{t \in[0,T\wedge \tau_R]}\|\tilde x^{\varepsilon}-\bar{x}^{\varepsilon}\|_{\alpha,[0,t]}^2\big] 
&\le& C\left(\varepsilon \Delta^{-1}+\Delta\right).
\end{eqnarray}
Then, we  obtain  
\begin{eqnarray}\label{3.84}
\mathbb{E}\big[\sup _{t \in[0,T\wedge \tau_R]}\frac{1}{\varepsilon h^2(\varepsilon)}\|\tilde x^{\varepsilon}-\bar{x}^{\varepsilon}\|_{\alpha,[0,t]}^2\big] 
&\le&  C\left(\frac{\varepsilon}{\Delta\varepsilon h^2(\varepsilon)}+\frac{\Delta}{\varepsilon h^2(\varepsilon)}\right).
\end{eqnarray}
 {Precisely, we could take suitable $0<\Delta=\Delta(\varepsilon)<1$ such that  when $\varepsilon \to 0$
\begin{eqnarray*}
	\frac{\Delta}{\varepsilon h^2(\varepsilon)}	\to 0,\quad
	\frac{\varepsilon}{\Delta\varepsilon h^2(\varepsilon)}	\to 0.
\end{eqnarray*}
For example, when $h(\varepsilon)=\varepsilon^{-\frac{\theta}{2}}$ with $\theta\in(0,1)$, we could take $\Delta=\Delta(\varepsilon)=\varepsilon^{\gamma+1}h^2(\varepsilon)|\ln \varepsilon|$ with $\gamma\in(0,\theta-1/2)$.}

For each fixed large $R$, we see that
\begin{eqnarray*}\label{3.84'}
	\lim_{\varepsilon \to 0}\mathbb{E}\big[\sup _{t \in[0,T\wedge \tau_R]}\frac{1}{\varepsilon h^2(\varepsilon)}\|\tilde x^{\varepsilon}-\bar{x}^{\varepsilon}\|_{\alpha,[0,t]}^2\big]= 0.
\end{eqnarray*}
Then, we have 
\begin{eqnarray}\label{main}
&&\mathbb{P}\big(\sup _{t \in[0, T]}\frac{1}{\varepsilon h^2(\varepsilon)}\|\tilde x^{\varepsilon}-\bar{x}^{\varepsilon}\|_{\alpha,[0,t]}^2\geq \delta\big)\cr &\leqslant & \mathbb{P}\left(T>{\tau}_R\right)+\mathbb{P}\big(\sup _{t \in[0,T\wedge \tau_R]}\frac{1}{\varepsilon h^2(\varepsilon)}\|\tilde x^{\varepsilon}-\bar{x}^{\varepsilon}\|_{\alpha,[0,t]}^2 \geq \delta, T \leqslant {\tau}_R\big) \cr
&\leqslant & \mathbb{P}\big(\|B^H\|_{1-\alpha, \infty, T} \geq R\big)+\mathbb{P}\big(\sup _{t \in[0,T\wedge \tau_R]}\frac{1}{\varepsilon h^2(\varepsilon)}\|\tilde x^{\varepsilon}-\bar{x}^{\varepsilon}\|_{\alpha,[0,t]}^2 \geq \delta\big) \cr
&\leqslant & \mathbb{P}\big(\|B^H\|_{1-\alpha, \infty, T} \geq R\big)  +\frac{1}{\delta}\mathbb{E}\big[\sup _{t \in[0,T\wedge \tau_R]}\frac{1}{\varepsilon h^2(\varepsilon)}\|\tilde x^{\varepsilon}-\bar{x}^{\varepsilon}\|_{\alpha,[0,t]}^2  \big] ,
\end{eqnarray}
where the final inequality is from the Chebyshev inquality.

For each fixed  $R>0$ and $\delta>0$, we take $\limsup_{\varepsilon \rightarrow 0}$, the second term on the right hand side of \eqref{main} converges to $0$. Next let $R \to\infty$.  Then the first term on the right hand side of \eqref{main} converges to $0$. Hence,
\begin{eqnarray*}\label{3.85'}
	\limsup_{\varepsilon \to 0}\mathbb{P}\big(\sup _{t \in[0, T]}\frac{1}{\varepsilon h^2(\varepsilon)}\|\tilde x^{\varepsilon}-\bar{x}^{\varepsilon}\|_{\alpha,[0,t]}^2\geq \delta\big)= 0.
\end{eqnarray*}
Next, by the continuous inclusion $C^{\alpha+\kappa} \subset W_0^{\alpha, \infty} \subset C^{\alpha-\kappa}$ holds for any  small enough $\kappa>0$,  we have
\begin{eqnarray*}\label{3.81}
	\lim_{\varepsilon \to 0}\mathbb{P}\big(\sup _{t \in[0,T\wedge \tau_R]}\frac{1}{\varepsilon h^2(\varepsilon)}\|\tilde x^{\varepsilon}-\bar{x}^{\varepsilon}\|_{\alpha,[0,t]}^2\geq \delta\big)= 0.
\end{eqnarray*}
 {Due to the definition of $\hat z^{\varepsilon}$ given in \eqref{3.78},  that 
for any $\delta>0$,
\begin{eqnarray*}
	\lim_{\varepsilon \to 0}\mathbb{P}\big(\|\hat z^\varepsilon\|^2_{\alpha\textrm{-hld}}\geq\delta\big)=0.
\end{eqnarray*}}
By using Step 2 of \thmref{thm1}, we can prove that if  $(u^{\varepsilon}, v^{\varepsilon})\in \mathcal{A}^{N}_b$  converges in distribution to $(u, v)$ as $\varepsilon \to 0$,   $\bar z^{\varepsilon}=\mathcal{G}^0 (u^{\varepsilon}, v^{\varepsilon})$  converges in distribution to $\tilde z=\mathcal{G}^0 (u, v)$ in $ C^{\alpha}([0,T],\mathbb{R}^m)$ as $\varepsilon \to 0$. Then we show that the statement \textbf{(b)} holds.

\textbf{Step 3}.  {Similar to Step 3 of \thmref{thm1}}, the proof is finished. \qed

\section*{Acknowledgments}

 {The authors are grateful to Professor Wei Liu  for helpful comments about Example 4.1.}
This work was partly supported by the Key International (Regional) Cooperative Research Projects of the NSF of China (Grant 12120101002), the NSF of China (Grant 12072264)
and JSPS KAKENHI (Grant No. 20H01807).

\end{document}